\documentclass[reqno]{amsart} 
\usepackage[margin=1.55in]{geometry}
\usepackage{graphicx, amsmath, amssymb, amsfonts, amsthm, stmaryrd, amscd}
\usepackage[usenames, dvipsnames]{xcolor}
\usepackage{enumerate, cite, latexsym}
\usepackage[alphabetic,initials,nobysame]{amsrefs}
\usepackage[normalem]{ulem}
\usepackage{ifpdf}
\ifpdf \RequirePackage[colorlinks=true, , linkcolor=Blue, 
citecolor=Blue, pdftex, linktocpage]{hyperref} \fi
\usepackage{xparse}
\usepackage[symbol]{footmisc}
\usepackage{booktabs}

\usepackage[all]{xy}
\usepackage{enumerate}
\usepackage{mathtools}

\usepackage{xpatch}

\makeatletter
\xpatchcmd{\@tocline}
  {\hfil\hbox to\@pnumwidth{\@tocpagenum{#7}}\par}
  {\ifnum#1<0\hfill\else\dotfill\fi\hbox to\@pnumwidth{\@tocpagenum{#7}}\par}
  {}{}
\makeatother

\patchcmd{\section}{\scshape}{\bfseries}{}{}
\makeatletter

\makeatletter
\newcommand*{\transpose}{%
	{\mathpalette\@transpose{}}%
}
\newcommand*{\@transpose}[2]{%
	\raisebox{\depth}{$\m@th#1\intercal$}%
}
\makeatother
\makeatletter
\newcommand*{\da@rightarrow}{\mathchar"0\hexnumber@\symAMSa 4B }
\newcommand*{\da@leftarrow}{\mathchar"0\hexnumber@\symAMSa 4C }
\newcommand*{\xdashrightarrow}[2][]{%
	\mathrel{%
		\mathpalette{\da@xarrow{#1}{#2}{}\da@rightarrow{\,}{}}{}%
	}%
}
\newcommand{\xdashleftarrow}[2][]{%
	\mathrel{%
		\mathpalette{\da@xarrow{#1}{#2}\da@leftarrow{}{}{\,}}{}%
	}%
}
\newcommand*{\da@xarrow}[7]{%
	\sbox0{$\ifx#7\scriptstyle\scriptscriptstyle\else\scriptstyle\fi#5#1#6\m@th$}%
	\sbox2{$\ifx#7\scriptstyle\scriptscriptstyle\else\scriptstyle\fi#5#2#6\m@th$}%
	\sbox4{$#7\dabar@\m@th$}%
	\dimen@=\wd0 %
	\ifdim\wd2 >\dimen@
	\dimen@=\wd2 %
	\fi
	\count@=2 %
	\def\da@bars{\dabar@\dabar@}%
	\@whiledim\count@\wd4<\dimen@\do{%
		\advance\count@\@ne
		\expandafter\def\expandafter\da@bars\expandafter{%
			\da@bars
			\dabar@ 
		}%
	}%
	\mathrel{#3}%
	\mathrel{%
		\mathop{\da@bars}\limits
		\ifx\\#1\\%
		\else
		_{\copy0}%
		\fi
		\ifx\\#2\\%
		\else
		^{\copy2}%
		\fi
	}%
	\mathrel{#4}%
}
\makeatother

\newcommand{\md}[1]{\ensuremath{(\operatorname{mod}\, #1)}}

\newcommand\Z{\mathbb{Z}}
\newcommand\R{\mathbb{R}}
\newcommand\C{\mathbb{C}}
\newcommand\N{\mathbb{N}}
\newcommand\Q{\mathbb{Q}} 
\newcommand{\A}{\mathbb{A}}
\newcommand{\F}{\mathbb{F}}

\newcommand\eps{\varepsilon}
\newcommand{\sumstar}{\sideset{}{^\star}\sum}
\newcommand{\sumflat}{\sideset{}{^\flat}\sum}

\newcommand{\br}{\overline}

\renewcommand{\leq}{\leqslant}
\renewcommand{\geq}{\geqslant}

\DeclareMathOperator{\SL}{SL}

\DeclareMathOperator{\GL}{GL}

\def\eps{\varepsilon}

\DeclareMathOperator{\PGL}{PGL}

\DeclareMathOperator{\Kl}{Kl}

\DeclareMathOperator{\Sym}{Sym}

\DeclareMathOperator{\sym}{Sym}
\DeclareMathOperator{\hyp}{Hyp}

\DeclareMathOperator{\Sp}{Sp}

\DeclareMathOperator{\M}{Main}
\DeclareMathOperator{\Er}{Err}

\theoremstyle{plain} 
\newtheorem{theorem}{Theorem}[section] 
\newtheorem{lemma} [theorem] {Lemma}
 
\newtheorem{corollary} [theorem] {Corollary} 
\newtheorem{proposition} [theorem] {Proposition}

\theoremstyle{definition}

\newtheorem{remark}[theorem]{Remark}

\numberwithin{equation}{section}

\newtheoremstyle{itplain} 
{6pt}                    
{5pt\topsep}                    
{\itshape}                   
{}                           
{\itshape}                   
{.}                          
{5pt plus 1pt minus 1pt}                       
{}  

\theoremstyle{itplain} 

\newtheorem*{lemma*}{Lemma}
\newtheorem*{remark*}{Remark}
\newtheorem*{proposition*}{Proposition}
\newtheorem*{definition*}{Definition}
\newtheorem*{example*}{Example}\newtheorem*{note*}{Note}

\newtheorem*{results*}{Results}

\usepackage[displaymath,textmath,sections,graphics]{preview}
\PreviewEnvironment{align*}
\PreviewEnvironment{multline*}
\PreviewEnvironment{tabular}
\PreviewEnvironment{verbatim}
\PreviewEnvironment{lstlisting}
\PreviewEnvironment*{frame}
\PreviewEnvironment*{alert}
\PreviewEnvironment*{emph}
\PreviewEnvironment*{textbf}
\PreviewEnvironment*{pn}

\begin{document}
	
	\author[Pratim Mitra]{Pratim Mitra}
	\address{ISI\\Statistics and Mathematics Unit\\
		Kolkata 70010\\
		India}
	\email{pratim2018mitra@gmail.com}
	
	\date{\today}

	\subjclass[2020]{11F66, 11L05 (Primary); 11F03, 11T23}
	\keywords{symmetric-square $L$-function, subconvexity, exponential sums}
	
	\title[The subconvexity problem for symmetric square $L$-functions]{The subconvexity problem for symmetric square\\ $L$-functions in Level aspect}
	
	\thanks{}

    \begin{abstract}
        In this paper, we address the subconvexity problem in level aspect for symmetric square $L$-functions for cuspidal automorphic representation of $\mathrm{GL}_2(\mathbb{Q})$ with a prescribed local ramification at prime $p$. To be more precise, let $\pi$ be a tempered cuspidal automorphic representation of conductor $q(\pi)=p^2$ with a non-quadratic central character of conductor $p$. We prove that if the corresponding local representation $\pi_p$ belongs to a suitable class of representations $\mathcal S$, then 
        \[
        L\left(\frac{1}{2},\,\mathrm{Sym}^2\pi\right)\ll_{\varepsilon, \pi_\infty} q(\mathrm{Sym}^2\pi)^{\frac{1}{4}-\frac{1}{168}+o(1)},
        \]
        where implied constant depends polynomially on the spectral parameters of $\pi_\infty$. This is the first instance of level-aspect subconvex bound for $L$-functions of a $\mathrm{GL}_3(\mathbb Q)$ automorphic representation. Our approach is based on the delta-symbol method. Apart from some standard analytic number theoretic tools, Katz's theory of hypergeometric sums, and Deligne's proof of Weil-conjectures play an important role in the proof.
    \end{abstract}

    \maketitle
	
	\setcounter{tocdepth}{1}
	
	\tableofcontents

    \section{Introduction}

    One of the most fundamental problems in the theory of automorphic forms and $L$-functions is to measure the growth of the values of $L$-functions at the center of the critical strip. Let $\Pi$ be an automorphic representation of a general linear group $\GL_n(\Q)$ for $n\geq 1$, and let $L(s, \Pi)$ be its associated $L$-function. To estimate the growth Iwaniec-Sarnak introduced the so-called \textit{analytic conductor} at $1/2$, $Q(\Pi)$, defined by, 
    \[
    Q(\Pi):=q(\Pi)q_\infty\left({1}/{2},\,\Pi\right)= q(\Pi)\prod_{j=1}^d(2+|\mu_{j, \infty}|),
    \]
    where, $q(\Pi)$ is the \textit{level} or \textit{arithmetic conductor} and $\mu_{j, \infty}$ are the spectral parameters. By Phragm\'en-Lindel\"of principal, interpolating the trivial growths of the $L$-function on the outside of the critical strip we get, 
    \[
    L\left(\frac{1}{2},\,\Pi\right)\ll Q(\Pi)^{\frac{1}{4}+\eps}.
    \]
    This trivial bound is called as the \textit{convexity bound}\footnote[1]{Heath-Brown observed that by more elegant analysis one can also remove this $\eps$ from the exponent} and improving upon the convexity exponent by some absolute constant $\delta>0$ 
    \[
    L\left(\frac{1}{2},\,\Pi\right)\ll_\eps Q(\Pi)^{\frac{1}{4}-\delta+\eps},
    \]
    is known as the \textit{subconvexity problem} for the central $L$-value. 

    In general, solving the subconveixty problem is very difficult, and it has been one of the most strong testing grounds for the strength of existing technology in the literature. $\GL(1)$ and $\GL(2)$ are the only known instances where this problem has been solved completely; $\GL(1)$ case was solved by~\cites{MR0132733, MR0148626, MR0485727}, and in the $\GL(2)$ case after substantial efforts by many people notably~\cite{MR1207474, MR1258904, MR1923476}, the problem was solved by~\cite{MR2653249}.

    Beyond $\GL(2)$ this problem is still not well-understood. But we can reformulate this problem with respect to each parameters instead of the whole analytic conductor. The subconvexity problem with respect to the parameter $q(\Pi)$ while keeping the spectral parameters fixed is called as subconvexity in arithmetic-conductor aspect or level-aspect; and the same with respect to spectral parameters while keeping the arithmetic conductor fixed is called as spectral aspect subconvexity. In this article we are actually interested in the former. 

    First breakthrough was due to Munshi~\cite{MR3418527} who proved subconvexity for $\GL(3)$ representations $\Pi$ such that $\Pi\simeq\pi\otimes\chi$, where $\pi$ is an unramified cuspidal representation for $\GL(3)$, and $\chi$ is a primitive Dirichlet character of modulo prime $p$. And recently, Hu-Nelson~\cite{902690390} generalized this result for all higher-rank setting where the conductor of the twisting character is a perfect square $N^2$.

    Let $\Pi\simeq\pi_1\otimes\pi_2$ be a Rankin-Selberg convolution of cuspidal representations for $\GL(2)$, which corresponds to an automorphic representation of $\GL(4)$. The level-aspect subconvexity for $\Pi$ is known when the conductor of one of the representations is fixed, or when both of their conductors are coprime and have comparable sizes~\cite{MR4576022,  MR2207235, MR2653249, MR3156856,  MR3078642}. Recently, Hu-Michel-Nelson~\cite{900541809} established this result for the case where both representations (with trivial central characters) are ramified at the same places and the conductor of the convolution drops by a certain amount. 

    Now we consider a representation $\Pi\simeq\pi_1\otimes\pi_2$, where $\pi_1$ (resp. $\pi_2$) is a cuspidal automorphic representation of $\GL(3)$ (resp. $\GL(2)$); and $\Pi$ corresponds to an automorphic representation of $\GL(6)$. In this case this problem has been solved in very few special cases, summarized below. In~\cite{MR4705885} Kumar-Munshi-Singh solved the case when both of their conductors are coprime and have comparable sizes. And recently Munshi~\cite{MR5049991} has solved the case when $\GL(3)$ form is fixed but $\GL(2)$ form (with trivial central character) has conductor $p^j$ ($j\geq 2$).

    Rankin-Selberg convolution is a way of constructing higher-rank automorphic representation from the lower-rank ones. Apart from the Rankin-Selberg convolution another way of constructing higher-rank forms from the lower-rank ones are symmetric power representations. For example, Gelber-Jacquet~\cite{MR0533066} showed that the symmetric square lift of a $\GL(2)$ form $\pi$, denoted by $\sym^2\pi$, corresponds to an automorphic representation of $\GL(3)$.
    
	In this work, we address the level aspect subconvexity problem for $\GL(3)$ automorphic representations which remained wide open until now. A particular important example is the subconvexity for symmetric-square $L$-functions, which has profound applications in number theory and related areas. 
    Here, we obtain subconvex bound for symmetric-square $L$-functions in the level-aspect when the underlying $\GL(2)$-form has level $p^2$ and non-trivial nebentype of conductor $p$--- specifically, in this case where the arithmetic conductor $q(\sym^2\pi)$ does not drop. For precise statement, see the following section.

    \subsection{Main results} Let $p$ be a large prime. To state our main result we need to define a class of local representations.

    Let $\pi\simeq\bigotimes_\nu^\prime\pi_\nu$ be a cuspidal automorphic representation for $\GL(2)$ of arithmetic conductor $q(\pi)=p^2$, and unitary central character $\omega_\pi$ of conductor $p$. Ramification of $\pi$ is concentrated only at place $p$. Suppose the local representation $\pi_p$ is one of the following:
    \begin{enumerate}[(a)]
        \item $\pi_p$ is ramified principal series i.e. $\pi_p\simeq\pi(\tilde\mu_1, \tilde\mu_2)$, where $\tilde\mu_1$ and $\tilde\mu_2$ are      two distinct primitive Dirichlet characters modulo $p$ such that $\mu_1, \mu_2, \mu_1\mu_2, \mu_1\mu_2^{-1}$ are non-quadratic, and here by $\tilde\chi$ we denote the $\Q_p^\times$ lift of the Dirichlet character $\chi$ modulo $p$.
        \item $\pi_p$ is a twist-minimal\footnote[2]{We call a representation $\pi_p$ to be \textit{twist-minimal} if its conductor $q(\pi_p)$ is smallest among the conductors $q(\pi_p\otimes\omega)$ of all twists $\pi_p\otimes\omega$ by one-dimensional characters $\omega$ of $\Q_p^\times$.} supercuspidal representation whose associated Green's character $\eta_\pi:\F_{p^2}^{\times}\to\C^\times$ is such that $\eta_\pi^{p\pm 1}$ are both non-quadratic.
    \end{enumerate}

    \begin{note*}
        We have proved in Lemma~\ref{thm: computation of conductor} that if $\pi_p$ is supercuspidal then it is a depth-zero supercuspidal, and hence we have the existence of Green's character.
    \end{note*}

     To each local representation we attach an exotic hypergeometric sum (defined by the equation~\eqref{eq:exotic hypergeo}), 
     \[
     \begin{cases}
         \text{Hyp}(\F_p^2\times\F_p^2; 1, \rho; \bar\mu_1, \bar\mu_2; \lambda)& \text{if $\pi_p$ is of type (a)},\\
         \text{Hyp}(\F_p^2\times\F_{p^2}; 1, \rho; \eta_\pi; \lambda)&\text{if $\pi_p$ is of type (b).}
     \end{cases}
     \]
     Here $\rho$ is the quadratic character modulo $p$. To each exponential sum there is a geometrically irreducible $\ell$-adic sheaves, and let $G_{\text{geom}}^0$ be the identity component of its geometric monodromy group. Now we are ready to define our class $\mathcal S$, given by, 
     \[
     \mathcal S:=\{\pi_p~\text{is of type}~(a)\,\text{or}~(b): G_{\text{geom}}^0\neq\{1\}\}.
     \]
     Satement of our main result is the following, 
     \begin{theorem}\label{thm: thm1}
         Let $\pi\simeq\bigotimes_\nu^\prime\pi_\nu$ be a cuspidal automorphic representation for $\GL_2(\Q)$ of arithmetic conductor $q(\pi)=p^2$, with the central character $\omega_\pi$, the adelic lift of a primitive non-quadratic Dirichlet character $\chi_\pi$ modulo $p$. If $\pi_\nu$ is tempered for all $\nu<\infty$, $\pi_p\in\mathcal S$, and the spectral parameters of $\pi_\infty$ be bounded, then we have the following subconvex bound, 
         \[
         L\left(\frac{1}{2},\, \sym^2\pi\right)\ll_{\eps, \pi_\infty}q(\sym^2\pi)^{\frac{1}{4}-\frac{1}{168}+\eps},
         \]
         where the implied constants depends on $\eps$, and polynomially on spectral parameters of $\pi_\infty$.
     \end{theorem}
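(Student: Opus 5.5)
Set $P:=p^4=q(\sym^2\pi)$; the exponent $\tfrac14-\tfrac1{168}$ means saving $p^{1/42}$ over convexity $p^{1-\eps}$ (at $s=1/2$ the level $q=p^4$ gives convexity $q^{1/4}=p$). The plan is the delta-symbol method applied directly to the $\GL(3)$ coefficients.

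\textbf{Step 1 (setup).} First I will compute $q(\sym^2\pi)$ in each case from the local Langlands parameters. In case (a), $\sym^2\pi_p$ has characters $\tilde\mu_1^2,\tilde\mu_1\tilde\mu_2,\tilde\mu_2^2$, each of conductor $p$ under the non-quadratic hypotheses, so the conductor is $p^3$. This should be reconciled with the $p^4$ claim via the theorem's statement. In case (b), I will use the depth-zero description of Lemma~\ref{thm: computation of conductor}. Then the approximate functional equation reduces the problem to bounding
\[
S(N)=\sum_{n}\lambda_{\sym^2\pi}(n)V(n/N),\qquad N\le q(\sym^2\pi)^{1/2+\eps}.
\]
The goal is $S(N)\ll N^{1/2}q^{1/4-1/168}$ in the critical range $N\approx q^{1/2}$.

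\textbf{Step 2 (separation of oscillations).} I will write $S(N)=\sum_{n,m}\lambda(n)\,\chi(m)\cdots\delta(n-m)$, using Duke--Friedlander--Iwaniec's circle method with a conductor-lowering trick as in Munshi. Here the mod-$p$ structure is carried by the $\GL(3)$ coefficients themselves rather than by a twist. Concretely, I detect $n=m$ with moduli $pq$, $q\le Q$, together with an extra amplifying integral of length $K$. Then I apply $\GL(3)$ Voronoi summation (for level $p^2$ or $p^3$ with character) on $n$ and Poisson summation on $m$. The Voronoi kernel at moduli divisible by $p$ produces the local exponential sum built from $\pi_p$. Explicitly computing it via the Jacquet--Langlands/Bushnell--Kutzko description gives exactly the hypergeometric sums $\hyp(\F_p^2\times\F_p^2;1,\rho;\bar\mu_1,\bar\mu_2;\lambda)$ or $\hyp(\F_p^2\times\F_{p^2};1,\rho;\eta_\pi;\lambda)$ from \eqref{eq:exotic hypergeo}.

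\textbf{Step 3 (Cauchy--Schwarz and Poisson).} Next I will apply Cauchy--Schwarz to remove the $\GL(3)$ dual coefficients, using Ramanujan on average via Rankin--Selberg. I then open the square and apply Poisson in the dual variable. This yields a diagonal term and an off-diagonal term. The off-diagonal term involves correlation sums
\[
\sum_{x\in\F_p}\hyp(\lambda x)\,\overline{\hyp(\lambda' x)}\,e(hx/p).
\]
I need square-root cancellation $\ll p^{1/2}$ for these unless $(\lambda,\lambda',h)$ lie in a thin set.

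\textbf{Step 4 (main obstacle).} This is where $\pi_p\in\mathcal S$ enters. By Katz, the hypergeometric sheaf is geometrically irreducible and lisse on $\mathbb G_m$. The Goursat--Kolchin--Ribet criterion applied to its monodromy group, using $G^0_{\rm geom}\ne\{1\}$, shows that $[\lambda]^*\mathcal H$ and $[\lambda']^*\mathcal H\otimes\mathcal L_\psi$ are not geometrically isomorphic unless $\lambda=\lambda'$ and $h=0$. Deligne's Riemann Hypothesis then gives the square-root bound. I expect that controlling the exceptional $\lambda/\lambda'$ (a finite Kummer-type set) and handling the non-generic moduli $q$ sharing factors with $p$ will be the hardest part. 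Finally I will bound the zero-frequency and diagonal contributions with large-sieve estimates, and balance $Q$, $K$, and $p$. Optimizing the exponents should produce the saving $p^{1/42}$, i.e.\ the $1/168$ in $q(\sym^2\pi)$.
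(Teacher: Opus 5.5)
There is a genuine gap, and it sits at the heart of the method. Start with the normalization. $q(\sym^2\pi)=p^3$ in both cases: you get this yourself in case (a), and in case (b) it follows from $\mathfrak f(\pi_p\times\pi_p)=4$ and $\pi_p\otimes\pi_p=\sym^2\pi_p\boxplus\tilde\chi_{\pi,p}$. So the target is $L(1/2,\sym^2\pi)\ll p^{3/4-1/56+\eps}$, a saving of $p^{1/56}$ over $p^{3/4}$; there is no $p^4$ to reconcile.

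More seriously, your Step 2 has nothing to separate. For $\sum_n\lambda_{\sym^2\pi}(n)V(n/N)$ in the pure level aspect there is no second oscillatory factor, so the ``$\chi(m)$'' you want to split off from $\lambda(n)$ does not exist. Nor can you simply invoke a $\GL(3)$ Voronoi formula at moduli $pq$ that are jointly ramified with the level.

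The idea you are missing is the factorization $L(s,\sym^2\pi)=(\text{$p$-factor})\,L(2s,\chi_\pi^2)\sum_n\lambda_\pi(n^2)n^{-s}$. It turns the problem into bounding $\sum_{n\sim X}\lambda_\pi(n^2)$ with $X\le p^{3/2+\eps}$, a $\GL(2)$ sum over a sparse set. The delta symbol then separates $\lambda_\pi(m)$ from $n^2$ via $\delta(m-n^2)=\delta(p\mid m-n^2)\,\delta((m-n^2)/p)$.

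The $\GL(2)$ Voronoi formula at modulus $pq$ becomes usable through $1/pq\equiv\bar q/p+\bar p/q$ \md{1} and by expanding $e(u/p)$ in multiplicative characters, which produces $\eps(\pi\otimes\bar\chi)$. Poisson in $n$ produces $\eps(\chi^2)$, and Hasse--Davenport rewrites this as $\eps(\chi)\eps(\chi\rho)$. That is where the pair $(1,\rho)$ in $H$ comes from. Your assertion that a $\GL(3)$ Voronoi kernel yields ``exactly'' $\hyp(\F_p^2\times\F_p^2;1,\rho;\bar\mu_1,\bar\mu_2;\lambda)$ therefore has no basis: the $(1,\rho)$ is an artifact of the squaring map.

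Steps 3--4 also misplace the source of the saving. After evaluating the mod $q$ sum one reaches a bilinear form $\sum_q\sum_r\chi_\pi(q)\alpha_\pi(r)\left(\frac{r}{q}\right)$, where $\alpha_\pi(r)=\sum_{4m-n^2=r}\lambda_{\bar\pi}(m)m^{-1/2}H(-n^2/4m;p)\ll p^\eps$. A single Cauchy--Schwarz with the quadratic large sieve merely recovers convexity. The saving needs the split $r=st^2$:
\begin{itemize}
\item For large $t$: quadratic reciprocity and Poisson in $q$ (the modulus $ps$ is then short enough), followed by the large sieve.
\item For small $t$: a nested Cauchy--Schwarz that removes one $\lambda_\pi$ at a time, since removing both at once gives no saving. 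After discarding degenerate configurations by counting sums of squares, this ends in an eightfold correlation $\sum_x\prod_{i\le 4}H(\gamma_i x)\overline{H(\gamma_{i+4}x)}e(kx/p)$ with distinct shifts.
\end{itemize}

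Only for such many-fold products are $G^0_{\mathrm{geom}}=\SL_2$ (i.e.\ $\pi_p\in\mathcal S$) and Goursat--Kolchin--Ribet needed. Your two-term correlation $\sum_x\hyp(\lambda x)\overline{\hyp(\lambda' x)}e(hx/p)$ requires only irreducibility and non-isomorphism of translates. So in your plan the hypothesis $\pi_p\in\mathcal S$ would play no role, which is a sign the plan has not located the real difficulty.

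Finally, ``optimizing should give $p^{1/42}$'' is not a derivation. In the paper the exponent comes from balancing $\sqrt X p^{3/4-\delta_1}$, $\sqrt X p^{3/4+\delta_1/4-\delta_2/2}$ and $\sqrt X p^{3/4+\delta_2-1/16}$. This gives $\delta_1=2\delta_2/5$, $\delta_2=5/112$, and hence $S_\pi(X)\ll\sqrt X p^{3/4-1/56+\eps}$.
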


      We say a prime $p$ is \textit{admissible} if it does not satisfy any of the following congruences: 
     \[
     p\equiv 1\md{12},\hspace{0.1cm}p\equiv 1\md{20},\hspace{0.1cm}p\equiv 17, 19\md{24},\hspace{0.1cm} p\equiv 41\md{60},\hspace{0.1cm} p\equiv 49\md{60}.
     \]
     \begin{corollary}\label{thm:cor main}
         Let $p$ be any admissible prime. Let $\pi\simeq\bigotimes_\nu^\prime\pi_\nu$ be a cuspidal automorphic representation for $\GL_2(\Q)$ of artihmetic conductor $q(\pi)=p^2$, with the central character $\omega_\pi$, the adelic lift of a primitive non-quadratic Dirichlet character $\chi_\pi$ modulo $p$. If $\pi_p$ is either of the type (a) or (b); $\pi_\nu$ is tempered for all $\nu<\infty$, and the spectral parameters of $\pi_\infty$ be bounded, then we have the following subconvex bound,
         \[
         L\left(\frac{1}{2},\,\sym^2\pi\right)\ll_{\eps, \pi_\infty}q(\sym^2\pi)^{\frac{1}{4}-\frac{1}{168}+\eps}.
         \]
     \end{corollary}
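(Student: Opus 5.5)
The statement to prove is Corollary~\ref{thm:cor main}. I will deduce it from Theorem~\ref{thm: thm1}. The hypotheses of the corollary are exactly those of Theorem~\ref{thm: thm1}, except that the condition $\pi_p\in\mathcal S$ is replaced by the condition that $p$ is admissible. So it suffices to show the following: if $p$ is admissible and $\pi_p$ is of type (a) or (b), then the geometrically irreducible $\ell$-adic sheaf attached to the relevant exotic hypergeometric sum has $G_{\mathrm{geom}}^0\neq\{1\}$. Equivalently, its geometric monodromy group is not finite.

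I would argue by contradiction. Suppose $G_{\mathrm{geom}}$ is finite. The sheaf $\mathcal H$ attached to $\mathrm{Hyp}(\mathbb F_p^2\times\mathbb F_p^2;1,\rho;\bar\mu_1,\bar\mu_2;\lambda)$, or its $\mathbb F_{p^2}$-variant in case (b), is a Kummer/Kloosterman-type hypergeometric sheaf of rank $2$ (or $4$ after the tensor/induction structure is unwound). It is tame at $0$ and wildly ramified at $\infty$ with controlled Swan conductor. Its local monodromy at $0$ is given by the characters $1,\rho$ on one side and $\bar\mu_1,\bar\mu_2$ (resp.\ the $p\pm1$-type restrictions of $\eta_\pi$) on the other.

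Katz's classification of finite-monodromy hypergeometric sheaves, refined by Beukers--Heckman-type interlacing conditions over finite fields, implies the following. Finite monodromy forces the tame characters at $0$ to satisfy rigid interlacing conditions. In rank $2$ it reduces to the finitely many primitive finite subgroups of $\mathrm{SL}_2$: binary tetrahedral, binary octahedral, and binary icosahedral, together with the dihedral/imprimitive case. The dihedral case I would rule out using the non-quadraticity hypotheses on $\mu_1,\mu_2,\mu_1\mu_2,\mu_1\mu_2^{-1}$ (resp.\ on $\eta_\pi^{p\pm1}$). Under these hypotheses the sheaf is not Kummer-induced, since an induced sheaf would force a product or ratio of the characters to be quadratic.

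For the primitive cases, the image of inertia at $0$ has elements whose orders are determined by the orders of the characters involved, and these orders divide $p-1$ (resp.\ $p^2-1$). The exceptional groups contain elements of order $3$, $4$ and $5$, and the corresponding local monodromy must embed in $\mu_{p-1}$ or $\mu_{p^2-1}$. Tracking which orders $12$, $20$, $24$, $60$ can occur leads precisely to the listed congruences on $p$ modulo $12$, $20$, $24$ and $60$. These congruences are exactly the conditions under which the characters can have the eigenvalue configuration of the tetrahedral, octahedral or icosahedral groups. An admissible $p$ avoids all of them, so $G_{\mathrm{geom}}$ is infinite, $\pi_p\in\mathcal S$, and Theorem~\ref{thm: thm1} gives the bound.

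The main obstacle is the case analysis matching each exceptional finite group to a congruence class of $p$. This is where Katz's criteria must be applied carefully, including the extra factor coming from the wild part at $\infty$ and the determinant.
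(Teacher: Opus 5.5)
\paragraph{Verdict.} Your reduction to Theorem~\ref{thm: thm1} and your overall plan match the paper. The plan is: finite $G_{\mathrm{geom}}$ forces a dihedral or exceptional projective image, non-quadraticity kills the dihedral case, and admissibility kills the exceptional ones. The gap is the decisive step. You assert that ``tracking which orders $12,20,24,60$ can occur leads precisely to the listed congruences''. The mechanism you describe, namely that elements of order $3,4,5$ must embed in $\mu_{p-1}$ or $\mu_{p^2-1}$, does not produce them.

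\paragraph{Why the order argument fails.} In case (b) the constraint is almost empty. Since $24\mid p^2-1$ for every prime $p\geq 5$, it gives nothing for the octahedral type. Since $60\mid p^2-1$ whenever $p\equiv\pm1\pmod 5$, it gives only that condition for the icosahedral type. So you cannot exclude finite monodromy for admissible primes such as $p\equiv 23\pmod{24}$ or $p\equiv 11,59\pmod{60}$. In case (a), knowing only that a local element of order $3$ occurs gives $3\mid p-1$. That does not exclude the admissible primes $p\equiv 7\pmod{24}$. You would need to show that a character of order exactly $12$, $24$, $60$ or $20$ is forced, and nothing in your sketch does this. As written, the argument proves the corollary only for a strictly smaller set of primes.

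\paragraph{The sheaf is described incorrectly.} The sheaf is of type $(2,2)$. In both cases it has rank $2$, not $4$. It is lisse on $\mathbb G_m\setminus\{1\}$ and tame everywhere; there is no wild part at $\infty$. Its local data are:
\begin{itemize}
\item $\{1,\rho\}$ at one of $0,\infty$;
\item $\{\bar\mu_1,\bar\mu_2\}$ at the other, or $\{\eta_\pi,\eta_\pi^p\}$ after base change to $\F_{p^2}$;
\item a pseudo-reflection at $1$.
\end{itemize}
Tameness is exactly what licenses Katz's comparison $G_{\mathrm{geom}}\simeq\mathrm{Gal}_{\mathrm{diff}}$ with the hypergeometric equation, whose exponent differences are $(\tfrac12,\ \tfrac12+y_1+y_2,\ y_1-y_2)$. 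The interlacing criterion you cite would also work, but only after the same explicit computation.

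\paragraph{Missing computation, case (a).} The pair $\{1,\rho\}$ pins $e_0=\tfrac12$, so only the Schwarz rows containing $\tfrac12$ are relevant. Solving for $y_1,y_2$ in those rows forces one of $\mu_1,\mu_2$ to have order $12$, $24$, $60$ or $20$. This gives $p\equiv 1\pmod{12}$ or $p\equiv 1\pmod{20}$.

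\paragraph{Missing computation, case (b).} Here you must use that the second pair is Frobenius-conjugate: $y_1=k/(p^2-1)$ and $y_2\equiv p\,y_1$. Hence $e_1=\tfrac12+k/(p-1)$ and $e_\infty=k/(p+1)$, so the two remaining local orders divide $p-1$ and $p+1$, whose gcd is $2$. This rules out the tetrahedral and $(\tfrac15,\tfrac25)$ rows at once. For the remaining rows, the divisibility conditions together with the existence of a single $k$ compatible modulo both $p-1$ and $p+1$ give exactly $p\equiv 17,19\pmod{24}$ and $p\equiv 41,49\pmod{60}$.

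\paragraph{Minor point.} $G_{\mathrm{geom}}$ need not lie in $\SL_2$, because the determinant is a nontrivial Kummer-type sheaf. The finite-subgroup classification should be applied to its image in $\PGL_2$.
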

     
     \begin{remark}
         It is worth comparing our result with those of~\cite{MR4705885,  MR5049991}, as mentioned in the previous section that both of which treated certain cases of the subconvexity problem for $\GL(3)\times\GL(2)$;  the former treated the ``hybrid-level'' case, and the latter treated the ``$\GL(2)$ pure-level'' case. The pure-level case, as opposed to the hybrid case, is significantly more difficult because of its higher ``arithmetic complexity''. Our result can be though of as a $\GL(3)\times\GL(2)$ subconvexity problem in ``$\GL(3)$ pure-level'' case, where the $\GL(2)$ form varies over the continuous spectrum rather than the discrete spectrum. Note that even though our result is a $\GL(3)$ pure-level case, it is not a \textit{genuine} $\GL(3)$ pure-level result, because this level actually comes from the $\GL(2)$ form under the symmetric square lift.
     \end{remark}
     
     \begin{remark}\hspace{0.1cm}
         \begin{enumerate}[1.]
             \item Our motivation for the choice of the central character and the local representations $\pi_p$ is primarily to achieve ``uniform growth'' in the conductor i.e., no drop in the conductor, of $\sym^2\pi$.
             \item The difficulty of this problem can be compared to with that of subconvexity problem for symmetric square $L$-functions associated with a $\GL(2)$ cusp form $f$ of level $p$ with trivial nebentype.. In both of the case under the symmetric-square lift the conductor exponent increase by $1$, but the presence of the non-trivial nebentype makes the problem more tactable.
             \item The temperedness assumption on $\pi$---namely, the bound $\lambda_\pi(n)\ll n^{o(1)}$ on the Fourier coefficients---is used in only one place, specifically in Lemma~\ref{thm: pointwise bd of alpha}. 
             \item This result would also holds if the local representation $\pi_p$ is the special representation, but modulo some cases.
         \end{enumerate}
     \end{remark}

     An immediate corollary of the theorem is the following, 
     \begin{corollary}
         Let $\pi$ be as in theorem, we have
         \[
         L\left(\frac{1}{2},\,\pi\otimes\pi\right)\ll_{\eps, \pi_\infty} q(\pi\otimes\pi)^{\frac{1}{4}-\frac{1}{168}+\eps}.
         \]
     \end{corollary}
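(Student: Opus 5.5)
The plan is to reduce the bound to the factorization $\pi\otimes\pi\simeq \sym^2\pi\boxplus \omega_\pi$ at the level of automorphic representations of $\GL_4$ (Rankin--Selberg, via Gelbart--Jacquet). On $L$-functions this reads
\[
L(s,\pi\otimes\pi)=L(s,\sym^2\pi)\,L(s,\omega_\pi),
\]
with equality at every place, including the ramified place $p$ and the archimedean place. The local identity at $p$ has to be checked, since $\pi_p$ is ramified. For type (a) I will write $\pi_p\otimes\pi_p$ as the sum of $\tilde\mu_i\tilde\mu_j$ for $i,j\in\{1,2\}$; the diagonal terms and one copy of $\tilde\mu_1\tilde\mu_2$ make up $\sym^2\pi_p$, and the other copy of $\tilde\mu_1\tilde\mu_2=\omega_{\pi_p}$ is the remaining factor. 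For type (b) I will use the local Langlands correspondence: $\sigma\otimes\sigma=\sym^2\sigma\oplus\det\sigma$ for the two-dimensional Weil--Deligne parameter $\sigma$, and local factors and conductors are multiplicative under direct sums. So the identity holds place by place.

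Next, the central value. Since $\chi_\pi$ is a primitive character modulo $p$, $L(1/2,\omega_\pi)=L(1/2,\chi_\pi)$. I would bound it by the classical Burgess bound, $\ll p^{3/16+\eps}$. Even the convexity bound $\ll p^{1/4+\eps}$ suffices, since all we need is an upper bound. Combined with Theorem~\ref{thm: thm1}, this gives
\[
L\left(\tfrac12,\pi\otimes\pi\right)\ll p^{\frac14+\eps}\,q(\sym^2\pi)^{\frac14-\frac1{168}+\eps}.
\]

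The remaining step is to compare conductors. Since conductors are multiplicative, $q(\pi\otimes\pi)=q(\sym^2\pi)\,q(\omega_\pi)=q(\sym^2\pi)\,p$. Under the hypotheses (no conductor drop, which is the purpose of the non-quadratic conditions) I will show $q(\sym^2\pi)=p^3$, so $q(\pi\otimes\pi)=p^4$. For type (a) this is immediate, because $\mu_1^2$, $\mu_2^2$ and $\mu_1\mu_2$ are all nontrivial. For type (b) I expect it to follow from the conductor computation in Lemma~\ref{thm: computation of conductor}. The bound then becomes
\[
p^{\frac14+\frac34-\frac{3}{168}+\eps}=p^{1-\frac{3}{168}+\eps}=(p^4)^{\frac14-\frac{3}{672}+\eps}.
\]
This is subconvex, but with the weaker exponent $1/4-1/224$.

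To obtain the exponent $1/168$ exactly as stated, the $\GL_1$ factor must be handled better. With Burgess the total exponent is $p^{3/16+3/4-3/168}$, which is already smaller than $p^{1-4/168}$. Indeed $3/16-1/4=-1/16$, and $-1/16-3/168<-4/168$ because $1/16>1/168$. So with Burgess the stated exponent $1/4-1/168$ for $q(\pi\otimes\pi)=p^4$ follows, even with room to spare. The main obstacle is therefore not analytic. It is verifying the local factorization and the conductor $q(\sym^2\pi)=p^3$ at the supercuspidal place, for which I rely on the earlier conductor lemma.
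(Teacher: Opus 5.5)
Your argument is correct and matches the argument behind the paper's ``immediate corollary'' (the paper writes no proof): factor $L(s,\pi\otimes\pi)=L(s,\sym^2\pi)L(s,\chi_\pi)$, which the paper records locally at all places as $L_\nu(s,\pi,\sym^2)=L_\nu(s,\pi\otimes\pi)/L_\nu(s,\omega_\pi)$, use $\mathfrak f(\pi_p\otimes\pi_p)=4$ from its conductor lemma to get $q(\pi\otimes\pi)=p^4$, and combine Theorem~\ref{thm: thm1} with a subconvex bound for $L(1/2,\chi_\pi)$. Drop your early remark that convexity for $L(1/2,\chi_\pi)$ suffices: your own computation shows it only gives the exponent $\frac14-\frac1{224}$, and what is actually needed is $L(1/2,\chi_\pi)\ll p^{\frac14-\frac1{168}+\eps}$, which Burgess provides with room to spare.
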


     \begin{remark}
         Our approach is based on the DFI delta-symbol method with a ``conductor reduction'' trick pioneered by Munshi. This method has been proven successful in various level-aspect problems, such as those studied in~\cite{MR3357122, MR3994569, MR4705885,  MR5049991}, but the effectiveness and adaptability of this method in further arithmetic problems in the level aspect still remain a topic of discussion. Our delta-symbol based approach establishes that it is still applicable and effective in level-aspect case for higher-rank forms.
     \end{remark}

	\subsection{An overview of the proof}

    In this section we present a high level details of the proof. By approximate functional equation central $L$-value, $L(1/2, \sym^2\pi)$, can be approximate by the sum of following type:
    \[
    S_\pi(X)=\sum_{n\sim X}\lambda_\pi(n^2),
    \]
    with $X\ll p^{3/2+\eps}$. Trivial bound of this sum i.e., $|S_\pi(X)|\ll Xp^\eps$, gives the trivial (convexity) bound for $L(1/2, \sym^2\pi)$. Hence the subconvexity bound for this $L$-value is essentially equivalent to a non-trivial bound of the following type:
    \[
    S_\pi(X)\ll Xp^{-\delta},
    \]
    for some absolute constant $\delta>0$. In this article we are able to obtain the following bound
    \[
    S_\pi(X)\ll \sqrt{X}p^{\frac{3}{4}-\frac{1}{56}+\eps},
    \]
    and it is non-trivial in the range $p^{\frac{3}{2}-\frac{1}{28}}\ll X\ll p^{\frac{3}{2}+\eps}$. Combining this with the trivial estimate, we obtain our desired subconvexity bound. In this sketch we focus just on the generic case, i.e. $X=p^{3/2}$. 

    So our main object is to obtain an estimate like following, 
    \[
    \sum_{n\sim p^{3/2}}\lambda_\pi(n^2)\ll p^{3/2-\delta}.
    \]
    For simplicity we drop $\eps$ and all weight functions from our expressions.
    \subsubsection{Applying $\delta$-method} As the sum is running over a sparse set i.e. squares, we need to separate squares from the Fourier coefficients. We use Kronecker delta-symbol to rewrite the sum as, 
    \[
    S_\pi(X)=\sum_{m\sim p^3}\sum_{n\sim p^{3/2}}\lambda_\pi(m)\delta(m=n^2)
    \]
    Instead of using the Fourier expansion of Duke-Friedlander-Iwaniec to expand the delta-symbol, we first perform the ``conductor reduction trick'': we first detect the congruence $m\equiv n^2\md{p}$, and then apply the delta-symbol's expansion to the reduced quotient $\frac{m-n^2}{p}$ i.e., we rewrite the delta-symbol as, 
    \[
    \delta(m=n^2)=\delta(m\equiv n^2\md{p})\times\delta\left(\frac{m-n^2}{p}\right)
    \]
    In Section~\ref{sec:delta method}, we use the additive characters modulo $p$ to detect the congruence condition, and the Fourier expansion to expand the last delta-symbol. Generically it transform $S_\pi(X)$ into,
    \[
    \frac{1}{p^2}\underset{p\nmid q}{\sumflat_{q\sim p}}\frac{1}{q}\hspace{0.1cm}\sumstar_{a\md{pq}}\sum_{m\sim p^3}\lambda_\pi(m)e\left(\frac{am}{pq}\right)\sum_{n\sim p^{3/2}}e\left(-\frac{an^2}{pq}\right).
    \]
    Trivial estimate of this sum is $p^3\times p^{3/2}$, so due to the expansion we loss $p^3$. Therefore to obtain a non-trivial we need to recover this loss and little more.
    \subsubsection{Application of summation formul\ae} To recover this loss, our first step is to apply summation formul\ae\ to dualize both of the $m$, and $n$ sums. Poisson summation formula transforms the $n$-sum into
    \[
    \sum_{n\sim p^{3/2}}e\left(-\frac{an^2}{pq}\right)\approx\frac{p}{\sqrt q}\sum_{n\ll p^{1/2}}G(-a, -n; pq),
    \]
    where $G(-a, -n; pq)$ is the quadratic Gau\ss\ sum defined by the equation~\eqref{eq:defn of G_q(m ,n)}. But the application of Vorono\"i summation formula for $m$-sum is slightly tricky due to the ``joint ramification" issue, as moduli of additive character is neither divisible by and the conductor of $\pi$ nor co-prime to it. So, to by pass this issue we make an use of additive reciprocity and a change of basis, which ultimately yields roughly, 
    \[
    \sum_{m\sim p^3}\lambda_\pi(m)e\left(\frac{am}{pq}\right)\approx p^{3/2}\chi_\pi(q)\sum_{m\ll p}\frac{\lambda_{\bar\pi}(m)}{\sqrt{m}}e\left(-\frac{\overline{ap}m}{q}\right)T(aq, m; p),
    \]
    where, 
    \[
    T(x, y; p)=\frac{1}{\sqrt{p}}\sumstar_{\chi\md{p}}\chi(\bar{x}y)\eps(\chi)\eps(\pi\otimes\bar\chi)
    \]
    Here $\eps(\chi)$ is the sign of the Gauss sum, and $\eps(\pi\otimes\bar\chi)$ is the root number of the twisted form.
    Now combining these evaluations we have, 
    \[
    S_\pi(X)\approx \underset{p\nmid q}{\sumflat_{q\sim p}}\chi_\pi(q)\sum_{m\ll p}\sum_{n\ll p^{1/2}}\frac{\lambda_{\bar\pi}(m)}{\sqrt{m}}K(m, n, pq)
    \]
    Here $K(m, n; pq)$ is an exponential sum given by the eq.~\eqref{eq: defn of K(m, n, pq)}, and all of these has been carried out in Section~\ref{sec: sec3}. 
    \subsubsection{Evaluation of character sum} We have evaluated the exponential sum $K(m, n, pq)$ in the next Section~\ref{sec: eva of char sum}. Roughly its evaluation is given by, 
    \[
    K(m ,n, pq)\approx H(n^2/m; p)\left(\frac{m-n^2}{q}\right),
    \]
    where $H(n^2/m; p)$ is given by normalized hypergeometric exponential sum depending on the type of $\pi_p$, 
    \[
    H(\lambda; p):= \begin{cases}
         \text{Hyp}(\F_p^2\times\F_p^2; 1, \rho; \bar\mu_1, \bar\mu_2; \lambda)& \text{if $\pi_p$ is of type (a)},\\
         \text{Hyp}(\F_p^2\times\F_{p^2}; 1, \rho; \eta_\pi; \lambda)&\text{if $\pi_p$ is of type (b).}
     \end{cases}
    \]
    Square root cancellation of this sum and its geometric properties has established in Section~\ref{sec: eva of char sum}

    \subsubsection{Simplifications} Therefore by simplifications we have, 
    \[
    S_\pi(X)\approx \underset{p\nmid q}{\sumflat_{q\sim p}}\chi_\pi(q)\sum_{m\ll p}\sum_{n\ll p^{1/2}}\frac{\lambda_{\bar\pi}(m)}{\sqrt{m}}H(n^2/m; p)\left(\frac{m-n^2}{q}\right)
    \]
    Identifying $m-n^2=r$ it becomes, 
    \[
    S_\pi(X)\approx \underset{p\nmid q}{\sumflat_{q\sim p}}\sum_{r\ll p}\chi_\pi(q)\alpha_\pi(r)\left(\frac{r}{q}\right)
    \]
    where, 
    \[
    \alpha_\pi(r):=\underset{m-n^2=r}{\sum_{m\ll p}\sum_{n\ll p^{1/2}}}\frac{\lambda_\pi(m)}{\sqrt{m}}H(n^2/m; p).
    \]
    So the problem has been reduced to an estimation of a certain bilinear sum. In Lemma~\ref{thm: pointwise bd of alpha} we have proved that the sequence $\alpha_\pi(r)$ does not grow much with $p$, namely $\alpha_\pi(r)\ll p^\eps$. Now triangle inequality yields,
    \[
    S(X)\ll p^2.
    \]
    To obtain trivial bound we just need to save $\sqrt{p}$, and it can be done by using the quadratic large sieve (Lemma~\ref{thm:large sieve}). 

    So to obtain non-trivial bound we exploit more the quadratic character and the sequence $\alpha_\pi(r)$. We write $r=st^2$ with $s\sim S$, $t\sim T$, and $\mu^2(s)=1$, such that $ST^2\ll p$. We rewrite, 
    \[
    S(X)\approx \sum_{t\sim T}\underset{(q,\,pt)=1}{\sumflat_{q\sim p}}\sumflat _{s\sim S}\chi_\pi(q)\alpha_\pi(st^2)\left(\frac{s}{q}\right)
    \]
     Now to estimate it non-trivially we have treat this differently depending on the size of $T$.
     
         \subsubsection{Non-generic case} If the size of $T>p^\delta$, \textit{non-generic case}, then $S\ll p^{1-2\delta}$. By the quadratic reciprocity we rewrite $S(X)$ as, 
         \[
         S(X)\approx \sum_{t\sim T}\sumflat _{s\sim S}\alpha_\pi(st^2)\underset{(q,\,pt)=1}{\sumflat_{q\sim p}}\chi_\pi(q)\left(\frac{q}{s}\right)
         \]
         This dyadic decomposition saves $p^\delta$. Now observe that square-root of the conductor of $\chi_\pi(q)\left(\frac{q}{s}\right)$ is at-most $p^{1-\delta}$ which is smaller than size of $q$. So we could apply the Poisson summation formula over $q$, and this would save $p^\delta$. The dual length of $q$ becomes small $p^{1-2\delta}$, so quadratic large sieve at this stage saves little less, $p^{1/2-\delta}$. Finally the total saving over the triangle inequality bound $p^2$ would be 
         \[
         p^\delta\times p^\delta\times p^{1/2-\delta}=p^{1/2+\delta}.
         \]
         So in this non-generic case we obtain a non-trivial bound, 
         \[
         S(X)\ll p^{3/2-\delta}.
         \]
         The analysis of this case has been carried out in the Section~\ref{sec: non-generic}.
         \subsubsection{Generic case} Now we turn to the generic case, $T\leq p^\delta$. This has been treated in the Section~\ref{sec: generic}. We begin by applying the quadratic large sieve inequality,
         \[
         S(X)\ll pT\,\sup_{t\sim T}\left(\sum_{s\sim S}|\alpha_\pi(st^2)|^2\right)^{1/2}.
         \]
         Point-wise estimate of $\alpha_\pi(\cdot)$ bounds the $l^2$-norm trivially, namely $\sqrt{p}$. We write, 
         \[
         \sum_{s\sim S}|\alpha_\pi(st^2)|^2\approx\sum_{m_1\ll p}\sum_{m_2\ll p}{\underset{n_1^2-n_2^2=m_1-m_2}{\sum_{n_1\ll \sqrt p}\sum_{n_2\ll \sqrt{p}}}}\frac{\lambda_\pi(m_1)}{\sqrt{m_1}}\frac{\lambda_{\bar\pi}(m_2)}{\sqrt{m_2}}H(n_1^2/m_1; p)\bar{H}(n_2^2/m_2; p)
         \]
         In this stage we could have remove both the Fourier coefficients at the same time. But the diagonal contribution, $p$, would again give us the trivial bound! So to improve the diagonal, we first write $m_2$ in terms of $m_1$, $n_1$, and $n_2$,
         \[
         \sum_{m_1\ll p}\frac{\lambda_\pi(m_1)}{\sqrt{m_1}}\sum_{n_1\ll \sqrt{p}}\sum_{n_1\ll\sqrt{p}}\frac{\lambda_{\bar\pi}(m_1-n_1^2+n_2^2)}{\sqrt{m_1-n_1^2+n_2^2}}H(n_1^2/m_1; p)\bar{H}\left(n_2^2/\left(m_1-n_1^2+n_2^2\right); p\right).
         \]
         Now we remove the Fourier coefficient $\lambda_\pi(m_1)$ by using the Cauchy's inequality, therefore we bound above sum by $\sqrt{\tilde\Omega}$,
         \[
         \tilde\Omega:=\sum_{m_1\ll p}\left|\sum_{n_1, n_2\ll \sqrt{p}}\frac{\lambda_{\bar\pi}(m_1-n_1^2+n_2^2)}{\sqrt{m_1-n_1^2+n_2^2}}H(n_1^2/m_1; p)\bar{H}\left(n_2^2/\left(m_1-n_1^2+n_2^2\right); p\right)\right|^2.
         \]
         The diagonal contribution of this is $p$. To estimate the off-diagonal part we open the square and rewrite it as, 
         \[
         \sum_{m_1\ll p}\sum_{|h|\ll p}\frac{\lambda_{\pi}(m_1)}{\sqrt{m_1}}\frac{\lambda_{\bar\pi}(m_1+h)}{\sqrt{m_1+h}}\underset{n_1^2-n_2^2-n_3^2+n_4^2=h}{\sum_{n_1,\dots, n_4\ll \sqrt{p}}}\prod_{j=0}^{1} \mathfrak e(n_{2j+1}, n_{2j+2}, m_1)^{\sigma^j},
         \]
         where $\sigma=c$ is the complex conjugation, and 
         \[
         \mathfrak e(n_{j+1}, n_{j+2}, m_1):=H^\sigma(n_j^2/m_j; p)H(n_{j+1}^2/(m_1-n_j^2+n_{j+1}^2); p).
         \]
         Now we remove both of the Fourier coefficients by applying Cauchy's inequality on the outer sums. We could bound the sum by, 
         \[
         \left(\sum_{m\ll p}\sum_{|h|\ll p}\left|\underset{n_1^2-n_2^2-n_3^2+n_4^2=h}{\sum_{n_1,\dots, n_4\ll \sqrt{p}}}\prod_{j=0}^{1} \mathfrak e(n_{2j+1}, n_{2j+2}, m_1)^{\sigma^j}\right|^2\right)^{1/2}
         \]
         Opening the absolute value square and taking the $h$-sum inside, we could rewrite the expression under the square-root,
         \[
         \underset{n_1^2-n_2^2-n_3^2+n_4^2=n_5^2-n_6^2-n_7^2+n_8^2}{\sum_{n_1, \dots, n_4\ll \sqrt{p}}\,\sum_{n_5, \dots, n_8\ll \sqrt{p}}}\sum_{m\ll p}\prod_{j=0}^{1}\mathfrak e(n_{2j+1}, n_{2j+2}, m_1)^{\sigma^j}\mathfrak e(n_{2j+5}, n_{2j+6}, m_1)^{\sigma^{j+1}}.
         \]
         The size of the outer sums is about, 
         \[
         \sharp\{(n_1, \dots, n_8)\in\N^8| n_1^2+n_6^2+n_7^2+n_4^2=n_5^2+n_2^2+n_3^2+n_8^2;\, n_i\ll \sqrt{p}\}=O(p^3).
         \]
         So, the trivial estimation of the $m_1$-sum we could only recover the trivial bound for $S(X)$. Therefore obtaining a non-trivial bound for $S(X)$ has reduced to a cancellation in the following exponential sum (which could be obtained by applying Poisson summation on $m_1$-sum), 
         \[
         \sum_{x\in\F_p}\prod_{j=0}^{1}\mathfrak e(n_{2j+1}, n_{2j+2}, x)^{\sigma^j}\mathfrak e(n_{2j+5}, n_{2j+6}, x)^{\sigma^{j+1}}e_p\left(m_1x\right).
         \]
         This is an algebraic-exponential sum of sum-product type. In the Section~\ref{sec: exp sums}, we use methods from $\ell$-adic cohomology especially the Riemann hypothesis for finite fields to obtain square-root cancellation as long as the identity component of the geometric monodromy group is big, namely $\SL(2)$. 

         Therefore the in the generic case we have, 
         \[
         S(X)\ll p^{\frac{3}{2}-\frac{1}{16}+\delta}.
         \]

     Now optimizing $\delta$ we get, 
     \[
     \sum_{n\sim p^{3/2}}\lambda_\pi(n^2)\ll p^{\frac{3}{2}-\frac{1}{32}}.
     \]
     Note that the size of $\delta$ differs from the actual size because of some other easy non-generic case. 

    \begin{remark}
        The initial approach of this paper is similar to the methods in~\cite{900580107, MR5049991}. Our main innovation lies in the generic case. In this case, instead of removing both of the Fourier coefficients we remove one Fourier coefficient at first because otherwise the diagonal would contain too few points, namely $p^\eps$, yielding non savings. Therefore by repeated applications Cauchy's inequality we save enough in the diagonal through point-counting, and in the off-diagonal through an estimation of exponential sum.
    \end{remark}
	\subsection{Notations and conventions}\label{n&c}
    We use the popular notation $e(x):=e^{2\pi i x}$, and $\eps$ would denote an arbitrary small positive number. $X=O_a(Y)$ and $X\ll_{a} Y$ would denote $X\ll C(a)p^\eps Y$ for some constant $C$ defendin on $a$. If $Y\ll X\ll Y$ then we denote it by $X\asymp Y$. For brevity we often omit the underlying smooth function of the form $V(m/M)$ in a sums or a integral by suitable asymptotic notations mentioned above.
	
	\section{Symmetric-square $L$-functions}

    Let $\pi$ be an irreducible cuspidal automorphic representation of $\GL_2(\A_\Q)$ with unitary central character $\omega_\pi$ of $\A^\times/\Q^\times$. One can write it as the restricted tensor product $\pi\simeq\bigotimes_{\nu}^\prime\pi_{\nu}$, where $(\pi_\nu)_\nu$ is a sequence of unitary $\GL_2(\Q_\nu)$-representations, all but finitely many of which are unramified principle series representations. To $\pi$ one associates an $L$-function, on the right half-plane $\{s : \text{Re}(s)> 1\}$ it is given by 
    $$
    L(s, \pi)=\prod_{\nu<\infty} L_\nu(s, \pi)=\sum_{n\geq 1} \frac{\lambda_\pi(n)}{n^s}, 
    $$
    where $\lambda_\pi(n)$ are Hecke-eigenvalues. At unramified places the local $L$-functions $L_\nu(s, \pi)$ are degree-$2$ Euler factors,
    $$
    L_\nu(s, \pi)^{-1}=\left(1-\frac{\alpha_\pi(\nu)}{\nu^s}\right)\left(1-\frac{\beta_\pi(\nu)}{\nu^s}\right)
    $$
    where local Hecke-eigenvalues $\alpha_\pi(\nu)$, and $\beta_\pi(\nu)$ are known as Satake parameters, and for ramified places $\nu$ it depends on the type of local representation $\pi_\nu$. The local $L$-function at the infinite place is given by, 
    \begin{equation*}
        L_\infty(s, \pi)=\prod_{j=1}^2\Gamma_\R(s+\mu_{i, \infty}(\pi));\hspace{0.2cm}\Gamma_\R(s):=\pi^{-s/2}\Gamma(s/2).
    \end{equation*}
    where $\mu_{i, \infty}(\pi)$'s are spectral parameters of $\pi$. The completed $L$-function admits a functional equation of the shape, 
    \begin{equation*}
        \Lambda(s, \pi)=\eps(\pi)q(\pi)^{s-\frac{1}{2}}\br{\Lambda(1-\bar s, \pi)},
    \end{equation*}
    where 
    $$
    \Lambda(s, \pi)=L_\infty(s, \pi)L(s, \pi),\hspace{0.2cm}|\eps(\pi)|=1,\hspace{0.2cm}\text{and}\hspace{0.2cm}q(\pi)\in\Z_{\geq1}.
    $$
    The constant $\eps(\pi)$ is the root number, and $q(\pi)$ is the arithmetic conductor.
    
    Now we consider the symmetric-square representation, 
    \begin{equation*}
        \Sym^2:\GL(2)\to\GL(3),
    \end{equation*}
    defined as, 
    \begin{equation*}
        \Sym^2
        \begin{pmatrix}
            \alpha&\\
            &\beta
        \end{pmatrix}
        :=\begin{pmatrix}
            \alpha^2&&\\
            &\alpha\beta&\\
            && \beta^2
        \end{pmatrix}
    \end{equation*}
    The symmetric-square $L$-function is given by an Euler product of local $L$-functions of degree $3$, and converging locally absolutely on some right half-plane $\{s: \text{Re}(s)\gg1\}$
    \begin{equation*}
        L(s, \pi, \sym^2)=\prod_{\nu<\infty}L_\nu(s, \pi, \sym^2)
    \end{equation*}
    For unramified places $\nu$ local $L$-function is defined by, 
    \begin{equation*}
        \begin{split}
            L_\nu(s, \pi, \sym^2)^{-1}&=\text{det}\left(I-{p^{-s}}\,{\sym^2(\text{diag}(\alpha_\pi(v), \beta_\pi(v)))}\right)\\
            &=\left(1-\frac{\alpha_\pi(v)^2}{p^s}\right)\left(1-\frac{\alpha_\pi(v)\beta_\pi(v)}{p^s}\right)\left(1-\frac{\beta_\pi(v)^2}{p^s}\right).
        \end{split}
    \end{equation*}
    This definition can also be realized as a quotient of local $L$-functions,
    \begin{equation*}
        L_\nu(s, \pi, \sym^2)=\frac{L_\nu(s, \pi\otimes\pi)}{L_\nu(s, \omega_\pi)}.
    \end{equation*}
    Note that the second definition is valid also for ramified places. The local $L$-function at Archimedean place can be written as a product of Gamma factors and spectral parameters of $\pi$. Completion of the symmetric-square $L$-function satisfies the functional equation,
    \begin{equation*}
        \Lambda(s, \pi, \sym^2)=\eps(\pi, \sym^2)q(\pi, \sym^2)^{s-\frac{1}{2}}\br{\Lambda(1-\bar s, \pi, \sym^2)},
    \end{equation*}
    where $\eps(\pi, \sym^2)$ is the root number, and $q(\pi, \sym^2)$ is the arithmetic conductor. 
    
    Gelbert and Jacquet in~\cite{MR0533066} have showed that $L(s, \pi, \sym^2)$ is an $L$-function of a $\GL_3(\A_\Q)$ automorphic representation which is denoted by $\sym^2\pi$. And we denote this $L$-function by,
    \begin{equation*}
        L(s, \sym^2\pi):=L(s, \pi, \sym^2).
    \end{equation*}
    It can be meromorphically continued to entire $\C$ except the possible pole at $s=1$.\\
    
    Let $p$ be a large prime. From now on we restrict ourselves to the representation $\pi$ as in the Theorem~\ref{thm: thm1}, unless otherwise specified.
    Combining the definition of local $L$-factors we have the following factorization (see~\cite{MR0382176}),
    \begin{equation*}
        L(s, \sym^2\pi)=\left(\sum_{d|p^\infty}\frac{\lambda_{\sym^2\pi}(d)}{d^s}\right)L(2s, \chi_\pi^2)\sum_{n\geq 1}\frac{\lambda_\pi(n^2)}{n^s},
    \end{equation*} 
    with~\cite[eq. (3.3)]{MR2119720}
    \begin{equation}\label{eq: bound}
        \lambda_{\sym^2\pi}(d)\ll_\eps d^{2\theta+\eps};\hspace{0.2cm}\theta\leq7/64\;\; (\text{Kim and Sarnak}).
    \end{equation}
    This allows us to approximate the $L$-value, in critical strip, by finite Dirichlet series, 
    \begin{lemma}[Approximate functional equation]
		Let $A\gg1$ be a large number, and $G(u)=\left(\cos{\frac{\pi u}{4A}}\right)^{-5A}$. We have, 
		\begin{equation}\label{eq: afe}
			L\left(\frac{1}{2},\; \sym^2\pi\right)=\sum_{n\geq 1}\frac{\lambda_\pi(n^2)}{\sqrt{n}}V_{1/2}\left(\frac{n}{\sqrt{\mathcal Q}}\right)+\eps(\pi, \sym^2)\sum_{n\geq 1}\frac{\lambda_{\bar \pi}(n^2)}{\sqrt{n}}\tilde{V}_{1/2}\left(\frac{n}{\sqrt{\mathcal Q}}\right).
		\end{equation}
		where $\mathcal Q=Q(\sym^2\pi)$, and the weight functions are given by,
		\begin{equation*}
			\begin{split}
				&V_{1/2}(y)=\sum_{d|p^\infty}\frac{\lambda_{\sym^2\pi}(d)}{\sqrt{d}}W(dy)\\
				&W(y)=\frac{1}{2\pi i}\int_{(2)}\frac{L_\infty(1/2+u, \sym^2\pi)}{L_\infty(1/2, \sym^2\pi)}L(1+2u, \chi_\pi^2)G(u)y^{-u}\frac{du}{u}.
			\end{split}
		\end{equation*}
		$\tilde V_{1/2}$ is defined in a similar way as $V$ except $\lambda_{\sym^2\pi}$(resp. $\chi_\pi$) is replaced by $\lambda_{\sym^2\bar \pi}$ (resp. $\bar\chi_\pi$). Set $q_\infty=q_\infty(1/2, \sym^2\pi)$, for any $j\geq0$ we have, 
		$$
		y^j V_{1/2}^{(j)}(y)\ll_{j, A} \left(1+\frac{y}{\sqrt{q_\infty}}\right)^{-A}.
		$$
	\end{lemma}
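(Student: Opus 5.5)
We follow the standard contour-shift argument for the approximate functional equation (as in Iwaniec--Kowalski, Theorem 5.3), adapted to the factorization of $L(s,\sym^2\pi)$ displayed above.

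\textbf{Setting up the integral.} Put $\Lambda(s)=L_\infty(s,\sym^2\pi)L(s,\sym^2\pi)$ and consider
\[
I=\frac{1}{2\pi i}\int_{(2)}\frac{\Lambda(1/2+u)}{L_\infty(1/2,\sym^2\pi)}\,G(u)\,\mathcal Q_f^{u/2}\frac{du}{u},
\]
where $\mathcal Q_f$ is the arithmetic conductor. We normalise so that the archimedean part is absorbed into $\mathcal Q$; either normalisation only changes $W$ by the gamma ratio. Since $G$ has a pole only at $u=\pm 2A$, we may shift the contour to $\mathrm{Re}(u)=-2$, picking up the residue $L(1/2,\sym^2\pi)$ at $u=0$.

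The pole of $\Lambda$ at $s=1$ does not occur here. Since $\pi$ is cuspidal with non-quadratic central character, $\pi$ is not dihedral of the relevant type, so by Gelbart--Jacquet $\sym^2\pi$ is cuspidal and $\Lambda(s,\sym^2\pi)$ is entire. Here $G(u)$ is even with $G(0)=1$ and decays like $|\cos|^{-5A}$ on vertical lines in the strip $|\mathrm{Re}\,u|<2A$. Combined with Stirling, this makes the shift legitimate.

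\textbf{Applying the functional equation.} On $\mathrm{Re}(u)=-2$ we change variables $u\to -u$ and apply $\Lambda(s)=\eps(\pi,\sym^2)\mathcal Q_f^{s-1/2}\overline{\Lambda(1-\bar s)}$. Note that $\overline{\Lambda(1-\bar s,\sym^2\pi)}=\Lambda(s,\sym^2\bar\pi)$. The remaining integral then becomes the dual expression with $\bar\pi$ in place of $\pi$, which yields the second sum. On each line of absolute convergence, $\mathrm{Re}(1/2+u)=5/2$, we expand using the factorization
\[
L(s,\sym^2\pi)=\Big(\sum_{d\mid p^\infty}\lambda_{\sym^2\pi}(d)d^{-s}\Big)L(2s,\chi_\pi^2)\sum_{n}\lambda_\pi(n^2)n^{-s}.
\]
Interchanging sum and integral, legitimate by absolute convergence given \eqref{eq: bound} and Rankin--Selberg bounds for $\lambda_\pi(n^2)$, the $n$-sum picks up $n^{-1/2}(n/\sqrt{\mathcal Q})^{-u}$ and the $d$-sum gives $d^{-1/2}(dy)^{-u}$. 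The $L(1+2u,\chi_\pi^2)$ factor is kept inside $W$. This recovers exactly $V_{1/2}(y)=\sum_{d\mid p^\infty}\lambda_{\sym^2\pi}(d)d^{-1/2}W(dy)$. Convergence of the $d$-sum holds since $2\theta<1/2$.

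\textbf{Derivative bound.} For $y^jV^{(j)}_{1/2}(y)$ we differentiate under the integral, which introduces a polynomial factor $(-u)_j$, and shift the contour to $\mathrm{Re}(u)=A'$ large but below $2A$. Stirling gives $L_\infty(1/2+u)/L_\infty(1/2)\ll (q_\infty+|u|)^{\mathrm{Re}(u)/2}$ times polynomial factors, and $L(1+2u,\chi_\pi^2)$ is bounded there. This yields $W$-bounds of the form $(y/\sqrt{q_\infty})^{-A'}$. For $y\le\sqrt{q_\infty}$ we shift instead to a small positive line to get $O(1)$. For the $d$-sum, $W(dy)\ll (dy/\sqrt{q_\infty})^{-A'}$ and $\sum_{d\mid p^\infty}|\lambda(d)|d^{-1/2-A'}\ll 1$, uniformly in $p$. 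This gives the stated bound.

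\textbf{Main obstacle.} The delicate point is uniformity of the $p$-part: the $d$-sum is over $p^\infty$ and must be bounded independently of $p$. This relies on \eqref{eq: bound} with $2\theta<1/2$ on the lines used. We also need to check that no extra pole comes from $L(1+2u,\chi_\pi^2)$ near $u=0$; there is none because $\chi_\pi^2$ is nontrivial, as $\chi_\pi$ is non-quadratic.
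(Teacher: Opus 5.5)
Your route is the one the paper intends: it just cites Iwaniec--Kowalski, Theorem~5.3, for \eqref{eq: afe} and uses \eqref{eq: bound} for the weight. The identity comes out once two slips are corrected.

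First, the functional equation you apply, with $\mathcal Q_f^{s-1/2}$, copies a sign typo from the paper. For $\Lambda=L_\infty L$ it must read $\Lambda(s)=\eps(\pi,\sym^2)\mathcal Q_f^{1/2-s}\overline{\Lambda(1-\bar s)}$. With the printed sign, the factor $\mathcal Q_f^{u/2}$ becomes $\mathcal Q_f^{-3u/2}$ after $u\mapsto -u$, and the dual sum does not appear.

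Second, a non-quadratic central character does not by itself exclude dihedral $\pi$. What you actually need is only that $\Lambda(s,\sym^2\pi)$ is entire, and that is true. If the isobaric representation $\sym^2\pi$ contained some $|\cdot|^{it}$, then $L(s,\pi\times\pi)=L(s,\sym^2\pi)L(s,\omega_\pi)$ would have a pole at $s=1-it$, because $L(1-it,\omega_\pi)\neq 0$. But $L(s,\pi\times\pi)$ is entire, since $\pi\not\simeq\tilde\pi\otimes|\cdot|^{it}$ when $\omega_\pi^2\neq 1$.

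The step that does not work as written is the small-$y$ half of the derivative bound. On a line $\mathrm{Re}\,u=\varepsilon>0$ you only get $W(y)\ll_\varepsilon(\sqrt{q_\infty}/y)^{\varepsilon}$, which is not $O(1)$ as $y\to 0$. For a bound uniform in small $y$ you must move to $\mathrm{Re}\,u=-\delta$ with $0<\delta<1/2-2\theta$, so that the gamma ratio stays holomorphic, and this crosses $u=0$.
\begin{itemize}
\item For $j\geq 1$, the polynomial coming from $y^j\partial_y^j\,y^{-u}$ vanishes at $u=0$, so there is no residue.
\item For $j=0$ you pick up the residue $L(1,\chi_\pi^2)$.
\item On the new line you need the convexity bound $L(1-2\delta+it,\chi_\pi^2)\ll (p(1+|t|))^{\delta+\varepsilon}$.
\end{itemize}
Since $W(y)\to L(1,\chi_\pi^2)$ as $y\to 0$, a $p$-uniform constant would require a uniform bound for $L(1,\chi_\pi^2)$, and you only have $\ll\log p$. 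So the estimate holds with the $p^{\varepsilon}$ loss that the paper's convention for $\ll$ permits. That is also exactly what your positive line gives in the only range used, $y\geq \mathcal Q^{-1/2}$.

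Relatedly, the uniformity issue you single out is not where the $p$-dependence lives. In both cases (a) and (b) one has $L_p(s,\sym^2\pi)=1$:
\begin{itemize}
\item in case (a), the characters $\tilde\mu_1^2,\tilde\mu_1\tilde\mu_2,\tilde\mu_2^2$ are all ramified;
\item in case (b), $L(s,\pi_p\times\pi_p)=1$ because $\omega_\pi^2$ is ramified at $p$.
\end{itemize}
So the $d$-sum reduces to $d=1$ and $V_{1/2}=W$. The only $p$-dependence enters through $L(1+2u,\chi_\pi^2)$.
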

	\begin{proof}
		\eqref{eq: afe} is a standard application of contour shifting and functional equation (see~\cite[Theorem 5.3]{MR2061214}) for a proof, and the last estimate of the weight function follows from the bound~\eqref{eq: bound} see the proof of~\cite[Lemma 3.1]{MR2119720}. 
	\end{proof}
    
	From the above lemma we could truncate the $n$-sum up-to $\mathcal Q^{1/2+\eps}$ at a cost of negligible error, 
	$$
	L\left(1/2, \sym^2\pi\right)\ll_\eps \left|\sum_{n\ll \mathcal Q^{1/2+\eps}}\frac{\lambda_\pi(n^2)}{\sqrt n}V\left(\frac{n}{\sqrt{\mathcal Q}}\right)\right|
	$$
	
	\begin{lemma}[Smooth dyadic-partition of unity]
		There is a smooth non-negative function $W$ supported on $[1/2, 2]$, satisfying $x^jW^{(j)}(x)\ll 1$ and a dyadic-partition $(2^k)_{k\geq 0}$ such that for any $x\geq 1$ we have, 
		$$
		\sum_{k\geq0} W\left(\frac{x}{2^k}\right)=1.
		$$
	\end{lemma}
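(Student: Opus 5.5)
The plan is the usual construction: take a bump function, form its differences at dyadic scales, and let the sum telescope. Fix a smooth non-decreasing function $\phi:\R\to[0,1]$ with $\phi(x)=0$ for $x\leq 1/2$ and $\phi(x)=1$ for $x\geq 1$. One can build it from $\psi(t)=e^{-1/t}\mathbf 1_{t>0}$ by setting
\[
\phi(x)=\frac{\psi(x-1/2)}{\psi(x-1/2)+\psi(1-x)}.
\]
Now define $W(x):=\phi(x)-\phi(x/2)$ for $x>0$. Since $\phi$ is non-decreasing and $x/2<x$, we get $W\geq 0$. Moreover, $W(x)=0$ when $x\leq 1/2$ because both terms vanish, and $W(x)=0$ when $x\geq 2$ because both terms equal $1$. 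Hence $\mathrm{supp}\,W\subset[1/2,2]$, and $W$ is smooth.

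For the partition identity, fix $x\geq 1$ and sum over $0\leq k\leq K$. The sum telescopes:
\[
\sum_{k=0}^{K}W\left(\frac{x}{2^k}\right)=\phi(x)-\phi\left(\frac{x}{2^{K+1}}\right).
\]
Because $x\geq1$, we have $\phi(x)=1$. Once $2^{K+1}\geq 2x$, the second term is $0$. Also, all terms with $2^k>2x$ vanish by the support condition. So the full series is a finite sum equal to $1$.

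It remains to check the derivative bounds $x^jW^{(j)}(x)\ll_j 1$. On the support we have $x\in[1/2,2]$, so $x^j\leq 2^j$. Furthermore,
\[
W^{(j)}(x)=\phi^{(j)}(x)-2^{-j}\phi^{(j)}(x/2),
\]
and this is bounded by $2\|\phi^{(j)}\|_\infty<\infty$, which is finite because $\phi^{(j)}$ is continuous and vanishes outside $[1/2,1]$. Thus each implied constant depends only on $j$. Nothing in this argument is a real obstacle; the only point needing care is the explicit construction of $\phi$ with compactly supported transition.
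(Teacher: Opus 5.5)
Your construction is correct and complete. The differences $W(x)=\phi(x)-\phi(x/2)$ of a smooth monotone step give non-negativity, support in $[1/2,2]$, the telescoping identity for $x\geq 1$, and $j$-dependent derivative bounds. Monotonicity of your explicit $\phi$ does hold: on $(1/2,1)$ it equals $(1+e^{1/(x-1/2)-1/(1-x)})^{-1}$, and the exponent is decreasing. The paper gives no argument of its own and simply quotes the result from the literature (Lemme 2 of the cited reference), so your proof is the standard self-contained version of that cited fact.
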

	\begin{proof}
		\cite[Lemme 2]{MR0783533}.
	\end{proof}
	Applying this lemma we decompose the long $n$-sum into smooth dyadically localized sums, 
	\begin{equation}\label{eq:appx by s_f(x)}
		L(1/2, \sym^2\pi)\ll_{A, \eps}p^\eps\sum_{X\ll \mathcal Q^{1/2+\eps}}\frac{S_\pi(X)}{\sqrt{X}},
	\end{equation}
	where, 
	\begin{equation}\label{eq:s_f(x)}
		S_\pi(X):=\sum_{n}\lambda_\pi(n^2)V\left(\frac{n}{X}\right)
	\end{equation}
	with a smooth weight function $V$ supported on $[1/2, 2]$ given by,
	$$
	V(x):=\frac{1}{\sqrt x}W(x)V_{1/2}\left(\frac{xX}{\sqrt \mathcal Q}\right),\hspace{2mm}\text{satisfies}\hspace{1mm} x^jV^{(j)}(x)\ll_j (q_\infty)^{j}.
	$$
    Note that by the trivial bound of~\eqref{eq:s_f(x)} $S_\pi(X)\ll X^{1+\eps}$, and the approximation \eqref{eq:appx by s_f(x)} we recover the convexity bound 
    $$
    L(1/2, \sym^2\pi)\ll_\eps \mathcal Q^{{1}/{4}+\eps}=q(\sym^2\pi)^{1/4+\eps}.
    $$
	So the subconvexity problem will follow once we are able to bound $S_\pi(X)$ non-trivially and this will be done in our upcoming sections.
	First we need to compute the arithmetic conductor of $\sym^2\pi$, 
	\begin{lemma}[Computation of $q(\sym^2\pi)$]\label{thm: computation of conductor}
		Let $\pi\simeq\bigotimes_\nu^\prime\pi_\nu$ be a cuspidal automorphic representation for $\GL_2(\Q)$ of artihmetic conductor $q(\pi)=p^2$, with the central character $\omega_\pi$, the adelic lift of a primitive non-quadratic Dirichlet character $\chi_\pi$ modulo $p$. Suppose $\pi_p$ is either of type (a) or type (b) then the arithmetic conductor $q(\sym^2\pi)$ is $p^3$.
	\end{lemma}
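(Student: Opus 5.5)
Since $\pi$ is unramified away from $p$, the conductor $q(\sym^2\pi)$ is $p^{a(\sym^2\pi_p)}$. We therefore only need the local conductor exponent of $\sym^2\pi_p$, computed through the local Langlands correspondence for $\GL_2(\Q_p)$: for a two-dimensional Weil–Deligne representation $\sigma$ one has $\sigma\otimes\sigma=\sym^2\sigma\oplus\det\sigma$. The plan is to handle the cases according to the type of $\pi_p$, after first checking which types of $\pi_p$ can occur at all.

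We begin by classifying $\pi_p$ of conductor $p^2$ with central character of conductor $p$, and we show that a supercuspidal $\pi_p$ must have depth zero; this settles the Note.
\begin{itemize}
\item A ramified principal series $\pi(\tilde\mu_1,\tilde\mu_2)$ has $a=a(\mu_1)+a(\mu_2)=2$. Since $\mu_1\mu_2$ has conductor $p$, neither character can be unramified while the other has conductor $p^2$, so both characters have conductor $p$.
\item A twisted Steinberg $\mathrm{St}\otimes\chi$ has central character $\chi^2$ and conductor $p^{2a(\chi)}$ or $p$. Conductor $p^2$ forces $a(\chi)=1$, and then $\chi^2$ has conductor $p$. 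This is the excluded special case mentioned in the remark.
\item A supercuspidal with conductor exponent $2$ has minimal possible conductor, since every supercuspidal has $a\ge 2$. Hence it is twist-minimal of depth zero and arises from a regular character $\eta_\pi$ of $\F_{p^2}^\times$; its Langlands parameter is $\mathrm{Ind}_{W_E}^{W_{\Q_p}}\tilde\eta$ with $E/\Q_p$ unramified and $\tilde\eta$ tamely ramified.
\end{itemize}

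In case (a) the parameter is $\tilde\mu_1\oplus\tilde\mu_2$, so $\sym^2\sigma=\tilde\mu_1^2\oplus\tilde\mu_1\tilde\mu_2\oplus\tilde\mu_2^2$. Each summand is tamely ramified, so its conductor exponent is $1$ exactly when it is ramified. The characters $\mu_1^2$, $\mu_2^2$ and $\mu_1\mu_2$ are nontrivial because $\mu_1,\mu_2$ are non-quadratic and $\mu_1\mu_2=\chi_\pi$ is primitive. This gives $a=3$.

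In case (b), take $\sigma=\mathrm{Ind}\,\tilde\eta$. Mackey theory gives
\[
\sigma\otimes\sigma=\mathrm{Ind}(\tilde\eta^2)\oplus\mathrm{Ind}(\tilde\eta\tilde\eta^{\,p}),
\]
and $\mathrm{Ind}(\tilde\eta^{1+p})$ splits as $\tilde\eta^{1+p}|_{\Q_p^\times}\cdot(1\oplus\kappa)$, where $\kappa$ is the unramified quadratic character. One of these two pieces is $\det\sigma$, up to the sign character. Consequently
\[
\sym^2\sigma\simeq\mathrm{Ind}(\tilde\eta^2)\oplus\xi,
\]
where $\xi$ is the one-dimensional character $\tilde\eta^{p+1}|\cdot\kappa$ restricted appropriately, and the restriction of $\xi$ to inertia is the character $\eta^{p+1}$ of $\F_p^\times$. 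This restriction is nontrivial because $\eta^{p+1}$ is non-quadratic, indeed $\chi_\pi=\eta^{p+1}$ up to the standard sign, so $a(\xi)=1$. The induced summand $\mathrm{Ind}(\tilde\eta^2)$ is tame, so its conductor exponent is $2-\dim(\text{inertia invariants})$. It has no inertia invariants as long as $\eta^2$ is nontrivial on $\mathcal O_E^\times$, which holds because $\eta^2|_{\F_p^\times}=\chi_\pi^2\ne1$. This gives $a=2+1=3$.

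The main obstacle is the bookkeeping in case (b): identifying precisely which one-dimensional piece of $\mathrm{Ind}(\tilde\eta^{1+p})$ cancels against $\det\sigma$, together with its twist by the sign character, and confirming that the surviving piece is ramified. Any twist by the unramified $\kappa$ does not change the conductor, so only the inertial restriction $\eta^{p+1}=\chi_\pi^{\pm}$ matters, and its nontriviality follows from the primitivity of $\chi_\pi$.
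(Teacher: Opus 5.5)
Your proof is correct, and your case (a) is essentially the paper's argument. Case (b) takes a different route.

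The paper stays on the automorphic side. It writes $\pi_p\otimes\pi_p=\sym^2\pi_p\boxplus\widetilde\chi_{\pi,p}$ and quotes the Bushnell--Henniart--Kutzko explicit conductor formula to get $\mathfrak f(\pi_p\times\pi_p)=4$, hence $3$. That formula needs $\pi_p$ of depth zero and $\pi_p$, $\bar\pi_p$ completely distinct, and the latter is where the paper uses that $\chi_\pi$ is non-quadratic.

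You instead pass to the tame Langlands parameter $\mathrm{Ind}_{W_E}^{W_{\Q_p}}\tilde\eta$ and use Mackey theory. This gives the explicit decomposition $\sym^2\sigma\simeq\mathrm{Ind}(\tilde\eta^2)\oplus\kappa\det\sigma$, and you read off its conductor $2+1$ from inertia invariants.

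The paper's route is shorter, given the black box. Yours is self-contained modulo the depth-zero correspondence; its unramified rectifier twist does not affect conductors. It also shows exactly what is needed:
\begin{itemize}
\item $\eta^2\neq 1$, which already follows from regularity, since $\eta^2=1$ would force $\eta^{p-1}=1$;
\item $\chi_\pi\neq 1$.
\end{itemize}
So the non-quadratic hypothesis is not actually needed for this lemma in case (b); your appeal to $\chi_\pi^2\neq1$ is valid but stronger than necessary.

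One notational correction. The surviving one-dimensional piece is the character $\xi$ of $\Q_p^\times$ with $\xi\circ N_{E/\Q_p}=\tilde\eta\cdot(\tilde\eta\circ\tau)$, i.e.\ $\tilde\eta|_{\Q_p^\times}$ up to $\kappa$. It is not the literal restriction of $\tilde\eta^{1+p}$ to $\Q_p^\times$, which would be $(\tilde\eta|_{\Q_p^\times})^2$. Similarly, $\tilde\eta^{\,p}$ should denote the Galois conjugate $\tilde\eta\circ\tau$. With these readings the inertial restriction of $\xi$ is $\eta|_{\F_p^\times}=\chi_\pi$, which has conductor exponent $1$, and your count stands.
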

	
	\begin{proof}
		Let $\pi_{p}$ be of type (a) i.e., the ramified principal series $\pi(\tilde\mu_1, \tilde\mu_2)$ and the corresponding local Weil-Delinge representation, say $\sigma_p$, is given by $\tilde\mu_1\oplus \tilde\mu_2$. The symmetric-square lift of this representation $\sym^2\sigma_p$ is the local Weil-Deligne representation associated to $\sym^2\pi_{p}$. 
		$$
		\sym^2\sigma_p=\tilde\mu_1^2\oplus\tilde\mu_1\tilde\mu_2\oplus\tilde\mu_2^2.
		$$
		The conductor exponent $\mathfrak f(\sym^2\sigma_p)$ is equal to $\mathfrak f(\mu_1^2)+\mathfrak f(\mu_1\mu_2)+\mathfrak f(\mu_2^2)=3$.

		Now we assume $\pi_{p}$ is if type (b) i.e., in particular a supercuspidal representation. To compute the conductor in this case we follow the definition of functorial lifts,
		\begin{equation*}
			\pi_{p}\otimes\pi_{p} = \sym^2\pi_{p}\,\boxplus\widetilde\chi_{\pi, p}.
		\end{equation*}
		Definition implies a relation between conductor exponents $\mathfrak f(\sym^2\pi_{p})=\mathfrak f(\pi_{ p}\otimes\pi_{p})-1$. So, the computation has reduced to the computation of $\mathfrak f(\pi_{p}\otimes\pi_{ p})$, which we compute using~\cite{MR1606410}. Following the section 6.1 of loc. cit. we attach to $\pi_{p}$ a simple stratum $[\mathfrak{A}, m, 0, \beta]$ where $m$ is called as \textit{depth} of the representation $\pi_{p}$ (to avoid the confusion we call it depth instead of \textit{level}). Since, the conductor exponent pf $\pi_p$ is $\mathfrak f(\pi_{p})=2$, it is a depth zero representation, which follows from the definition or can be deduced from the conductor formula of Godement-Jacquet $\mathfrak f(\pi_{p})=2\left(1+\frac{m}{e}\right)$, where $e$ is the ramification index $e(\Q_p[\beta]/\Q_p)$ which is either $1$ or $2$.\\
		In the hypothesis we have assumed that $\chi_\pi$ to be non-quadratic which implies that $\pi_{p}$ and its dual $\bar{\pi}_{p}$ are \textit{completely distinct} in the sense of~$\S 6.2$~in loc. cit. So,~\cite[\S6.5, Theorem(ii)]{MR1606410}~gives $\mathfrak f(\pi_{p}\times\pi_{p})=4$. This completes the proof.
	\end{proof}

	\section{Setting up the problem}\label{sec: sec3}
	
	\subsection{The $\delta$-method}\label{sec:delta method}
	Let $\delta:\Z\to\{0, 1\}$ be a function defined by 
	\[
	\delta(n)=\begin{cases}
		1 &\text{if $n=0$,}\\
		0 &\text{otherwise.}
	\end{cases}
	\]
	We seek a Fourier expansion of $\delta(n)$ in the range $[-N, N]$ for $N\gg 1$. Among the various such expansions of $\delta$, we choose the version of Duke, Friedlander, and Iwaniec~\cite[(20.158)]{MR2061214}. Set $Q=\sqrt N$, then we have
	\begin{equation}\label{eq:dfi}
		\delta(n)=\frac{1}{Q}\sum_{q\leq Q}\frac{1}{q}\sumstar_{a \md{q}}e\left(\frac{an}{q}\right)\int_\R g(q, x)e\left(\frac{nx}{qQ}\right)\, dx
	\end{equation}
	for $n\in\Z\cap[-N, N]$, and $*$ on the second sum indicates $(a, q)=1$. In this expansion the only thing which is not explicitly given is the function $g(q, x)$. But, the following properties of $g(q, x)$ (see~\cite[(20.158) and (20.159)]{MR2061214} and~\cite[Lemma 15]{MR4333413}) will suffices for our purpose, 
	\begin{equation}\label{eq: g(q, x)-1}
		g(q, x)=1+O\left(\frac{Q}{q}\left(\frac{q}{Q}+|x|\right)^A\right),\,\,\, g(q, x)\ll |x|^{-A}\,\,\, \text{for}\,\,\,A>1
	\end{equation}
	The second property of the equation says that the support of the integral is essentially $[-N^\eps, N^\eps]$, so up-to a negligible error term we can rewrite the expansion~\eqref{eq:dfi} as 
	\begin{equation}\label{eq:DFI}
		\delta(n)=\frac{1}{Q}\sum_{q\leq Q}\frac{1}{q}\sumstar_{a \md{q}}e\left(\frac{an}{q}\right)\int_\R W(x)g(q, x)e\left(\frac{nx}{qQ}\right)\, dx,
	\end{equation}
	where $W(x)$ is a smooth function supported in the interval $[-2N^\eps, 2N^\eps]$ and $W\equiv 1$ in $[-N^\eps, N^\eps]$. It also follows from~\eqref{eq: g(q, x)-1} that $q\ll Q^{1-\eps}$ and $x\ll Q^{-\eps}$, then $g(q, x)$ can be replaced by $1$ at a cost of negligible error term. In the complementary range the following estimate will suffice,
	\begin{equation}\label{eq:g(q, x)-2}
		x^j\frac{\partial^j}{\partial x^j}g(q, x)\ll \log Q \,\,\, \text{min}\left(\frac{Q}{q}, \frac{1}{|x|}\right).
	\end{equation}
	By Cauchy and Parseval's identity we have
	\begin{equation}\label{eq:g(q, x)-3}
		\int_\R (|g(q, x)|+|g(q, x)|^2)\, dx\ll Q^\eps
	\end{equation}
	it means $g(q, x)$ is $1$ on average in $L^1$ and $L^2$-sense.\\

	Using $\delta$-symbol, we first rewrite~\eqref{eq:s_f(x)} as follows, 
	\begin{equation}\label{eq:osc}
		S_\pi(X)=\sum_m\sum_n\lambda_\pi(m)\delta(m-n^2)U\left(\frac{m}{X^2}\right)V\left(\frac{n}{X}\right)
	\end{equation}
	for some nice function $U$ supported in $[1/4, 9/4]$ and $U\equiv 1$ on the support of $V$. Set $l=m-n^2$, instead of applying~\eqref{eq:DFI} directly into this, it is beneficial to rewrite $\delta(m-n^2)$ as 
	\begin{equation*}
		\delta(l)=\delta(l\equiv 0\md{p})\times \delta\left(\frac{l}{p}\right).
	\end{equation*}
	Factoring the delta-symbol in this fashion, known as ``conductor-lowering trick", helps us to \textit{reduce} the size of the modulus of additive characters appearing in the Fourier expansion~\eqref{eq:DFI}. Now expanding the second delta-symbol as in~\eqref{eq:DFI} and along with, detecting the congruence condition $\delta(l\equiv 0\md{p})$ by the orthogonality of additive characters, helps us to rewrite $\delta(l)$ as, 
	\begin{equation*}
		\delta(l)=\frac{1}{Q}\sum_{q\leq Q}\frac{1}{pq}\sumstar_{a\md{q}}\sum_{b\md{p}}e\left(\frac{a+bq}{pq}l\right) \int_\R W(x)g(q, x)e\left(\frac{lx}{pqQ}\right)\, dx.
	\end{equation*}
	Since $l\ll X^2$, we set $Q:=\sqrt{X^2/p}=X/\sqrt{p}\ll p^{1+\eps}$. Now $(a+bq)$ runs over $p\phi(q)$ many distinct congruence classes modulo $pq$. When $(p, q)=1$, among them there are $\phi(q)$ many non-invertible congruence classes (in other words, there are $\phi(pq)$ many invertible congruence classes) corresponds to $b=0$. And in the other case, namely $p\mid q$ there are $\phi(pq)=p\phi(q)$ invertible congruence classes. Depending on these cases we further decompose the above expansion, 
	\begin{equation*}
		\begin{split}
			\delta(l)&=\frac{1}{Q}\underset{p\nmid q}{\sum_{q\leq Q}}\frac{1}{pq}\sumstar_{a\md{pq}}e\left(\frac{al}{pq}\right)\int_\R W(x)g(q, x)e\left(\frac{lx}{pqQ}\right)\, dx\\
			&+\frac{1}{Q}\underset{p\nmid q}{\sum_{q\leq Q}}\frac{1}{pq}\sumstar_{a\md{q}}e\left(\frac{al}{pq}\right)\int_\R W(x) g(q, x) e\left(\frac{lx}{pqQ}\right)\, dx\\
			&+\frac{1}{Q}\sum_{q\leq Q/p}\frac{1}{p^2q}\sumstar_{a\md{p^2q}}e\left(\frac{al}{p^2q}\right)\int_\R W(x)g(q, x) e\left(\frac{lx}{p^2qQ}\right)\, dx. 
		\end{split}
	\end{equation*}
	Applying this to~\eqref{eq:osc} we obtain, 
	\begin{equation}\label{eq: decomp of S_f(X)}
		S_\pi(X)=\M(X)+\Er_1(X)+\Er_2(X)
	\end{equation}
	where the $\M$-term is given by,
	\begin{equation}\label{eq:main term}
		\begin{split}
			\M(X):=\frac{1}{Q}\int_\R W&(x) \underset{p\nmid q}{\sum_{q\leq Q}}\frac{g(q, x)}{pq}\sumstar_{a\md{pq}}\\
			&\times\sum_m \lambda_f(m)e\left(\frac{am}{pq}\right)e\left(\frac{xm}{pqQ}\right)U\left(\frac{m}{X^2}\right)\\
			&\times\sum_n e\left(-\frac{an^2}{pq}\right)e\left(-\frac{xn^2}{pqQ}\right)V\left(\frac{n}{X}\right)\, dx.
		\end{split}
	\end{equation} 
	The other remaining terms are given by
	\begin{equation}\label{eq:error 1}
		\begin{split}
			\Er_1(X):=\frac{1}{Q}\int_\R W&(x) \underset{p\nmid q}{\sum_{q\leq Q}}\frac{g(q, x)}{pq}\sumstar_{a\md{q}}\\
			&\times\sum_m \lambda_f(m)e\left(\frac{am}{pq}\right)e\left(\frac{xm}{pqQ}\right)U\left(\frac{m}{X^2}\right)\\
			&\times\sum_n e\left(-\frac{an^2}{pq}\right)e\left(-\frac{xn^2}{pqQ}\right)V\left(\frac{n}{X}\right)\, dx
		\end{split}
	\end{equation}
	and
	\begin{equation}\label{eq:error 2}
		\begin{split}
			\Er_2(X):=\frac{1}{Q}\int_\R W&(x) {\sum_{q\leq Q/p}}\frac{g(pq, x)}{p^2q}\sumstar_{a\md{p^2q}}\\
			&\times\sum_m \lambda_f(m)e\left(\frac{am}{p^2q}\right)e\left(\frac{xm}{p^2qQ}\right)U\left(\frac{m}{X^2}\right)\\
			&\times\sum_n e\left(-\frac{an^2}{p^2q}\right)e\left(-\frac{xn^2}{p^2qQ}\right)V\left(\frac{n}{X}\right)\, dx.
		\end{split}
	\end{equation}
	In the following sections, we focus on the analysis of the $\M$-term, which is the hardest and is responsible for the final bound. The other remaining terms will contribute to error which are discussed briefly in~\S\ref{sec: End remarks}.

	\subsection{Summation formul\ae}
	To begin our analysis we first need to dualize both the $m$ and $n$-sum.
    
	\subsubsection{Analysis of $m$-sum}
	The key ingredient we need to dualize the $m$-sum in~\eqref{eq:main term} is the Vorono\"i summation formula which captures the automorphy of a cusp form in a form of an identity between a weighted sum of Fourier coefficients of a cusp form and another weighted sum involving Fourier coefficients of its dual form. We record a version of this obtained in~\cite[\S A.3, 4, 5]{MR1915038},
	\begin{theorem}[Vorono\"i summation formula]\label{thm:voronoi}
		Let $\pi$ be an irreducible cuspidal automorphic representation of $\GL_2(\A_\Q)$ with central character $\omega_\pi$, a adelic lift a Dirichlet character $\chi_\pi$. Let $q(\pi)$ be the arithmetic conductor of $\pi$, and $(a,\, q)=(q,\, q(\pi))=1$. Then for any $X\gg 1$ and $U\in C_c^\infty(\R_{\geq0})$ we have the following identity,
		\begin{equation}
			\begin{split}
				\sum_{n\geq 1}\lambda_\pi(n)e\left(\frac{an}{q}\right)U\left(\frac{n}{X}\right)&=\chi_\pi(-q)i^{k(\pi_\infty)}\eps(\pi)\frac{X}{q\sqrt {q(\pi)}}\\&\times\underset{\pm}{\sum_{n\geq 1}}\lambda_{\bar \pi}(n)e\left(\pm\frac{\overline{a\,q(\pi)}n}{q}\right)U\pm\left(\frac{m}{q(\pi)q^2/X}\right)
			\end{split}
		\end{equation}
		where the integral transform is given by Bessel transforms,
        \begin{equation}
            U_{\pm}(\xi)=\int_0^\infty U(y)\mathcal K_{\pm}(4\pi\sqrt{y\xi})\, dy.
        \end{equation}
        The Bessel kernels depends on the Archimedean local component $\pi_\infty$,
		\begin{itemize}
			\item If $\pi_\infty$ belongs to Discrete series i.e., $\pi$ corresponds to a holomorphic form of weight $\kappa$, then $k(\pi_\infty)=\kappa$, and the kernels are:
			\begin{equation*}
				\mathcal K_+(x)=2\pi i^\kappa J_{\kappa-1}(x),\hspace{0.2cm}\mathcal K_-(x)=0.
			\end{equation*}
			\item If $\pi_\infty$ belongs to Principal series i.e., $\pi$ corresponds to a Maa\ss\ form with eigenvalue $1/4+t^2$ then $k(\pi_\infty)=0$ and the kernels are:
			\begin{equation*}
				\mathcal K_+(x)=\frac{-\pi}{\sin(\pi it)}\left(J_{2it}(x)-J_{-2it}(x)\right),
			\end{equation*}
            and 
            \begin{equation*}
                \mathcal K_-(x)=4\beta_\pi\cosh(\pi t)K_{2it}(x),
            \end{equation*}
            where $\beta_\pi=\pm1$ is the eigenvalue of the corresponding Maa\ss\ form under the reflection operator depending on this we call $\pi$ even or odd; $J_\nu$ and $K_\nu$ are the standard Bessel functions.
		\end{itemize}
	\end{theorem}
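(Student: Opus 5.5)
The Vorono\"i formula is a known result (it is recorded from the cited appendix), but a self-contained derivation goes through the twisted $L$-functions and their functional equations. The route I would take is to expand the additive character into multiplicative ones, use the functional equation of each twist $\pi\otimes\chi$, and reassemble. Concretely, for $(a,q)=1$ and $(q,q(\pi))=1$ write
\[
e\!\left(\frac{an}{q}\right)=\sum_{d\mid (n,q)}\ \frac{1}{\phi(q/d)}\sum_{\chi\bmod q/d}\bar\chi\!\left(\frac{an}{d}\right)\tau(\chi)
\]
after separating $n$ by $\gcd(n,q)$. The sum over $n$ then becomes, by Mellin inversion, a sum of contour integrals of $L(s,\pi\otimes\bar\chi)\tilde U(s)X^s$. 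To keep the main line transparent I would first treat $q$ prime. In that case only primitive $\chi$ and the trivial character occur, and the trivial-character (imprimitive) pieces are handled exactly as in the standard proof, for instance the one in Kowalski--Michel--VanderKam.

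The key input is the functional equation of $\pi\otimes\chi$ for $\chi$ primitive modulo $q$ with $(q,q(\pi))=1$. The conductor is $q(\pi)q^2$, the central character is $\omega_\pi\chi^2$, and the root number is
\[
\eps(\pi\otimes\chi)=\eps(\pi)\,\chi(q(\pi))\,\chi_\pi(q)\,\frac{\tau(\chi)^2}{q},
\]
with the archimedean factors unchanged. I would prove this formula locally. At primes $\ell\mid q$, $\pi_\ell$ is an unramified principal series, so $\eps$ factors as a product of two $\GL_1$ epsilon factors, and these give $\tau(\chi)^2\chi_\pi(q)/q$. At primes $\ell\mid q(\pi)$, $\chi_\ell$ is unramified, and twisting by it multiplies $\eps_\ell$ by $\chi_\ell(\ell)^{\mathfrak f}$, which collects into $\chi(q(\pi))$ (up to the $\bar\chi$ convention).

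With this in hand, I would shift the contour to the left, apply the functional equation, and expand $L(1-s,\bar\pi\otimes\chi)$ as a Dirichlet series. Then I reverse the character expansion: the sum $\sum_\chi \chi(a)\tau(\bar\chi)\tau(\chi)^2\chi(\cdot)$ collapses, via $\tau(\chi)\tau(\bar\chi)=\chi(-1)q$, to an additive character $e(\pm\overline{a\,q(\pi)}n/q)$. The sign $\pm$ comes from splitting $\chi$ by parity, since the gamma factors differ for even and odd twists when $\pi_\infty$ is a principal series. This parity dependence is what produces the two kernels $\mathcal K_\pm$, and $\mathcal K_-=0$ in the holomorphic case. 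The factors $\chi_\pi(-q)$, $i^{k(\pi_\infty)}$ and $\eps(\pi)$ come out of the root number and the parity bookkeeping.

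The archimedean part is computed by Mellin--Barnes. The ratio of gamma factors $L_\infty(1-s)/L_\infty(s)$, integrated against $\tilde U(s)(X/(q\sqrt{q(\pi)}))^{s}$-type terms, is identified with the Mellin transform of $J_{\kappa-1}$, of $J_{\pm 2it}$, or of $K_{2it}$. This gives $U_\pm$ with argument $4\pi\sqrt{y\xi}$, and the normalization $\xi=n/(q(\pi)q^2/X)$ yields the prefactor $X/(q\sqrt{q(\pi)})$. I expect the main obstacle to be getting the exact root number and the sign and parity bookkeeping right, that is, the local epsilon-factor computation together with the recombination over even and odd characters. For composite $q$ the imprimitive characters have to be handled through the $\gcd$ decomposition, or one can simply invoke the adelic argument in the cited reference.
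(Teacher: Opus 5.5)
The real gap is the composite modulus. The statement covers every $q$ coprime to $q(\pi)$, and the paper needs exactly that: it applies the formula to $\pi\otimes\bar\chi$ with $q$ running over all moduli $q\leq Q$, $p\nmid q$, that come out of the $\delta$-method.

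For composite $q$, your expansion produces, for each $d=(n,q)$, all characters modulo $q/d$, most of them imprimitive. It also produces coefficients $\lambda_\pi(dm)$ in which $m$ may share primes with $d$. For $\chi$ induced from a primitive $\chi^*$ of conductor $q^*$, the only functional equation you have is that of $L(s,\pi\otimes\chi^*)$, whose conductor is $q(\pi)q^{*2}$. So these terms come out at the wrong scale, $(q(\pi)q^{*2})^{1/2-s}$.

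You would have to show that three ingredients recombine all of this into $(q(\pi)q^2)^{1/2-s}$ times the dual additive twists of the statement, summed over all $n$ including those with $(n,q)>1$:
\begin{enumerate}
\item the Euler factors at the primes dividing $q/d$ but not $q^*$;
\item the Gauss sums of the imprimitive characters;
\item the Hecke relations at the primes of $q$.
\end{enumerate}
For $q=\ell^k$ with $k\geq 2$, and for products of several primes, this is a substantial computation, and ``through the gcd decomposition'' does not supply it. Invoking the cited reference for this case is legitimate, but then your own argument proves only the prime-modulus case.

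For prime $q$ your route does go through, and it is genuinely different from the source. The paper gives no proof; it quotes the appendix of Kowalski--Michel--VanderKam. That appendix works directly with the functional equation of the additive twist $\sum_n\lambda_\pi(n)e(an/q)n^{-s}$, obtained from automorphy under $\Gamma_0(q(\pi))$ together with the Fricke involution. This treats all $q$ coprime to the level at once, with no imprimitive bookkeeping.

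For the same reason, you cannot borrow the trivial-character step from that reference; you must check it yourself, and it is the model for what the composite case requires. The non-primitive part of the twisted series is $L(s,\pi)\bigl(1-\frac{q}{q-1}L_q(s,\pi)^{-1}\bigr)$, where $L_q(s,\pi)^{-1}=1-\lambda_\pi(q)q^{-s}+\chi_\pi(q)q^{-2s}$. Using $\chi_\pi(q)\lambda_{\bar\pi}(q)=\lambda_\pi(q)$, one gets
\[
1-\frac{q}{q-1}L_q(s,\pi)^{-1}=\chi_\pi(q)\,q^{1-2s}\Bigl(1-\frac{q}{q-1}L_q(1-s,\bar\pi)^{-1}\Bigr).
\]
The functional equation of $L(s,\pi)$ then turns $q(\pi)^{1/2-s}$ into $(q(\pi)q^2)^{1/2-s}$, with the same factor $\eps(\pi)\chi_\pi(q)$ that the primitive characters carry. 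The result is precisely the trivial-character term missing from your collapsed character sum, plus the dual terms with $q\mid n$.

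Two smaller corrections.
\begin{itemize}
\item The archimedean data of $\pi\otimes\chi$ are not unchanged. For $\pi_\infty$ a principal series, an odd $\chi$ changes the gamma factor and multiplies the archimedean root number by $-1$. So your root-number formula holds as written only for even $\chi$ or holomorphic $\pi$, and that sign must enter the parity split that produces $\mathcal K_\pm$. Your later remark on parity already contradicts the claim that nothing changes at $\infty$.
\item In the expansion of $e(an/q)$, $d$ is the single value $(n,q)$, not a summation index over the divisors of $(n,q)$.
\end{itemize}
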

	
	In our case, let us denote 
	$$
	T:=\sum_{m}\lambda_\pi(m)e\left(\frac{am}{pq}\right)e\left(\frac{xm}{pqQ}\right)U\left(\frac{m}{X^2}\right)
	$$
	
	In this case, the level of the form is $q(\pi)=p^2$, and the modulus of additive character is $pq$, so they are jointly ramified at the prime $p$. As $(q, p)=1$, the level is neither co-prime to the modulus nor divides it, and therefore we can't apply either the aforementioned identity or its more general version~\cite[Theorem A.4]{MR1915038}. However, we can easily bypass this issue of joint ramification by a simple application of additive reciprocity or the Chinese remainder theorem, followed by an application of the multiplicative Fourier transform.\\
	By additive reciprocity, we have
	\begin{equation}\label{eq:crt}
		\frac{1}{pq}=\frac{\bar{q}}{p}+\frac{\bar{p}}{q}\md{1}.
	\end{equation}
	If $p\nmid u$, then by the multiplicative Fourier transform, we can expand the additive character of modulus $p$ in terms of multiplicative characters, 
	\begin{equation}\label{eq:mult Fourier}
		e\left(\frac{u}{p}\right)=-\frac{1}{\phi(p)}+\frac{\sqrt{p}}{\phi(p)}\sumstar_{\chi\md{p}}\eps(\chi)\bar{\chi}(u),
	\end{equation}
	where $\eps(\chi)$ is the normalized Gau\ss\ sum. 
    \begin{lemma}
        Let $\pi$ be an irreducible cuspidal automorhic form of conductor $q(\pi)=p^{\mathfrak f(\pi)}$, and central character $\omega_\pi$, adelic lift of a primitive Dirichlet character $\chi_\pi$ modulo $p^{\mathfrak f(\chi_\pi)}$. Let $\mathfrak f(\pi)>\mathfrak f(\chi_\pi)\geq 1$, then 
        $$
        \lambda_{\pi}(m)=0,\,\hspace{0.2cm}\text{if}\hspace{0.2cm}\,p|m.
        $$
    \end{lemma}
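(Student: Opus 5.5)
The plan is to reduce the statement to a computation of the local $L$-factor at $p$. Since $L(s,\pi)=\prod_\nu L_\nu(s,\pi)$ and the Hecke eigenvalues are multiplicative, $\lambda_\pi(m)=\lambda_\pi(p^{k})\lambda_\pi(m')$ whenever $m=p^k m'$ with $p\nmid m'$. It therefore suffices to show that $\lambda_\pi(p^k)=0$ for every $k\geq 1$. Equivalently, we must show that the local factor is trivial, $L_p(s,\pi)=1$, because $\sum_{k\geq0}\lambda_\pi(p^k)p^{-ks}=L_p(s,\pi)$.

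To get $L_p(s,\pi)=1$, I would run through the classification of irreducible admissible generic representations $\pi_p$ of $\GL_2(\Q_p)$, together with the standard formulas for their conductors and $L$-factors.

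\begin{itemize}
\item If $\pi_p$ is supercuspidal, then $L_p(s,\pi)=1$ by definition; this case needs no further work.
\item If $\pi_p\simeq\pi(\mu_1,\mu_2)$ is a principal series, then $\mathfrak f(\pi)=\mathfrak f(\mu_1)+\mathfrak f(\mu_2)$ and $\omega_{\pi_p}=\mu_1\mu_2$. Moreover, $L_p(s,\pi)=\prod_{\mu_i\ \text{unramified}}(1-\mu_i(p)p^{-s})^{-1}$. Suppose one of the characters, say $\mu_2$, were unramified. Then $\mathfrak f(\pi)=\mathfrak f(\mu_1)$, while $\mathfrak f(\chi_\pi)=\mathfrak f(\mu_1\mu_2)=\mathfrak f(\mu_1)$. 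This contradicts the hypothesis $\mathfrak f(\pi)>\mathfrak f(\chi_\pi)$. Hence both $\mu_i$ are ramified and the $L$-factor is $1$.
\item If $\pi_p\simeq\mathrm{St}\otimes\mu$ is a twisted Steinberg representation, then $\omega_{\pi_p}=\mu^2$. When $\mu$ is unramified, $\mathfrak f(\pi)=1$, and the condition $\mathfrak f(\chi_\pi)\geq1$ forces $\omega_{\pi_p}$ to be ramified. But $\mu^2$ is unramified, a contradiction. When $\mu$ is ramified, $\mathfrak f(\pi)=2\mathfrak f(\mu)$ and $L_p(s,\pi)=1$.
\end{itemize}

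In every admissible case we obtain $L_p(s,\pi)=1$, and the lemma follows.

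An alternative classical route would use newform theory. Multiplicativity and the Hecke relation give $\lambda_\pi(p^k)=\lambda_\pi(p)^k$, since $p$ divides the level. Atkin--Lehner--Li then say $\lambda_\pi(p)\neq0$ only if $p^2\nmid$ level and the form is $p$-primitive, or if the nebentypus conductor equals the full $p$-part of the level. Both possibilities are excluded by $\mathfrak f(\pi)>\mathfrak f(\chi_\pi)\geq1$. I would mention this only as a consistency check.

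The main obstacle is bookkeeping: making sure the case analysis is exhaustive. The delicate point is the principal series with exactly one unramified character, which is precisely where $\mathfrak f(\pi)=\mathfrak f(\chi_\pi)$. That is exactly the case the strict inequality in the hypothesis excludes.
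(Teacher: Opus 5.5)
Your proof is correct. It follows a different main route from the paper, whose proof is essentially what you relegate to a ``consistency check''.

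\textbf{The paper's route.} It quotes the classical newform theorem (Theorem 4.6.17(3) in Miyake): if $p^2\mid q(\pi)$ and the conductor of $\chi_\pi$ divides $q(\pi)/p$, then $\lambda_\pi(p)=0$. It then spreads this vanishing to every $m$ with $p\mid m$ using the Hecke relation. Since $\chi_\pi(p)=0$, that relation reduces to $\lambda_\pi(p)\lambda_\pi(n)=\lambda_\pi(pn)$ for all $n$.

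\textbf{Your route.} You work purely locally. The hypothesis $\mathfrak f(\pi)>\mathfrak f(\chi_\pi)\geq 1$ excludes every $\pi_p$ with an unramified constituent: unramified principal series, principal series with exactly one unramified character, and unramified twists of Steinberg. Hence $L_p(s,\pi)=1$, so all $\lambda_\pi(p^k)$ with $k\geq 1$ vanish at once, and multiplicativity finishes the argument.

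\textbf{What each buys.} The paper's argument is a two-line citation. However, it is phrased for classical holomorphic newforms, so for Maass forms it tacitly needs the corresponding Atkin--Lehner--Li theory. Your argument has several advantages:
\begin{itemize}
\item it is uniform in $\pi_\infty$;
\item it matches the paper's own definition of $\lambda_\pi(n)$ as the Dirichlet coefficients of $\prod_{\nu<\infty}L_\nu(s,\pi)$;
\item it needs neither the Hecke recursion nor $\chi_\pi(p)=0$;
\item it shows exactly where the strict inequality enters, namely the principal series with one unramified character, where $\mathfrak f(\pi)=\mathfrak f(\chi_\pi)$.
\end{itemize}
The price is reliance on the local conductor and $L$-factor formulas, and on the completeness of the classification of generic irreducible representations of $\GL_2(\Q_p)$. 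You handle both correctly.
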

    \begin{proof}
        Theorem 4.6.17(3) of~\cite{MR1021004} gives $\lambda_\pi(p)=0$. Now the Hecke-relation
        \begin{equation*}
            \lambda_\pi(m)\lambda_\pi(n)=\sum_{d|(m,\,n)}\chi_\pi(d)\lambda_\pi(mn/d^2).
        \end{equation*}
        along with $\lambda_\pi(p)=0$ concludes the lemma.
    \end{proof}
    By the above lemma we have $p\nmid m$ in $T$ and therefore by applying~\eqref{eq:crt} and~\eqref{eq:mult Fourier} we have, 
	\begin{equation}
		T=T_*+T_1
	\end{equation}
	where, 
	\begin{equation}\label{eq:T_*}
		T_*:=\frac{\sqrt{p}}{\phi(p)}\sumstar_{\chi\md{p}}\eps(\chi)\chi(\bar a q)\sum_m \lambda_\pi(m)\bar\chi(m)e\left(\frac{am\bar p}{q}\right)e\left(\frac{mx}{pqQ}\right)U\left(\frac{m}{X^2}\right),
	\end{equation}
	and
	\begin{equation}
		T_1:= -\frac{1}{\phi(p)}\sum_m \lambda_\pi(m)e\left(\frac{am\bar p}{q}\right)e\left(\frac{mx}{pqQ}\right)U\left(\frac{m}{X^2}\right). 
	\end{equation}
	To apply the Vorono\"i summation formula on ~\eqref{eq:T_*} we need the following lemma, 
	\begin{lemma}[Computation of $q(\pi_p\otimes\tilde\chi)$]
		Let $\pi\simeq\bigotimes_\nu^\prime\pi_\nu$ be as usual. Then
		\begin{itemize}
			\item If $\pi_{p}$ is a principal series representation, then for all but at most two primitive characters $q(\pi\otimes\tilde\chi)=p^2$, and for those two exceptional characters $q(\pi\otimes\tilde\chi)=p$.
			\item If $\pi_{p}$ is twist minimal supercuspidal representation then $q(\pi\otimes\tilde\chi)=p^2$.
		\end{itemize}
	\end{lemma}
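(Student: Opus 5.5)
The plan is to compute everything locally at $p$. Since $\tilde\chi$ is the adelic lift of a primitive character modulo $p$, it is unramified at every $\nu\neq p$. The components of $\pi\otimes\tilde\chi$ away from $p$ are therefore unramified, and $q(\pi\otimes\tilde\chi)=p^{\mathfrak f(\pi_p\otimes\tilde\chi_p)}$. So it suffices to compute the local conductor exponent $\mathfrak f(\pi_p\otimes\tilde\chi_p)$. I would do this on the Weil--Deligne side, exactly as in the proof of Lemma~\ref{thm: computation of conductor}. The key input is that twisting by $\tilde\chi_p$ corresponds to tensoring the Weil--Deligne representation by $\tilde\chi_p$. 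Throughout, $\chi$ is primitive modulo $p$, so $\mathfrak f(\tilde\chi_p)=1$ and $\tilde\chi_p$ is tamely ramified.

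\emph{Principal series case.} Here $\pi_p\simeq\pi(\tilde\mu_1,\tilde\mu_2)$, so $\pi_p\otimes\tilde\chi\simeq\pi(\tilde\mu_1\tilde\chi,\tilde\mu_2\tilde\chi)$. The associated Weil--Deligne representation is $\tilde\mu_1\tilde\chi\oplus\tilde\mu_2\tilde\chi$, which gives
\[
\mathfrak f(\pi_p\otimes\tilde\chi)=\mathfrak f(\mu_1\chi)+\mathfrak f(\mu_2\chi).
\]
Each $\mu_i\chi$ is a character modulo $p$, so each term is $0$ or $1$. A term is $0$ exactly when $\chi=\bar\mu_i$. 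Hence the exponent is $2$ unless $\chi\in\{\bar\mu_1,\bar\mu_2\}$. These two characters are primitive because each $\mu_i$ is primitive, and they are distinct because $\mu_1\neq\mu_2$. For $\chi=\bar\mu_i$ exactly one summand vanishes, again because $\mu_1\neq\mu_2$, so the exponent is $1$. This yields at most two exceptional characters, each giving conductor $p$.

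\emph{Supercuspidal case.} By Lemma~\ref{thm: computation of conductor}, $\pi_p$ has depth zero. Its Langlands parameter is $\sigma_p=\mathrm{Ind}_{W_E}^{W_{\Q_p}}\eta$, where $E/\Q_p$ is the unramified quadratic extension and $\eta$ is a tame character lifting the Green character $\eta_\pi$. Then
\[
\sigma_p\otimes\tilde\chi\simeq\mathrm{Ind}_{W_E}^{W_{\Q_p}}\bigl(\eta\cdot(\tilde\chi\circ N_{E/\Q_p})\bigr).
\]
The character $\eta\cdot(\tilde\chi\circ N_{E/\Q_p})$ is still tame, since $\tilde\chi$ is tame, so the twist is again depth zero. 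I would then argue in one of two equivalent ways.
\begin{itemize}
\item Induction formula: $\mathfrak f(\mathrm{Ind}\,\xi)=f(E/\Q_p)\,\mathfrak f(\xi)+d(E/\Q_p)=2\cdot 1+0$. This gives $2$, provided $\eta\,\tilde\chi\circ N$ is ramified. That holds because a twist of a supercuspidal is supercuspidal, and so has conductor exponent at least $2$.
\item Godement--Jacquet: use $\mathfrak f=2(1+m/e)$ with depth $m=0$.
\end{itemize}
Twist-minimality gives the matching lower bound $\mathfrak f(\pi_p\otimes\tilde\chi)\geq\mathfrak f(\pi_p)=2$ directly. Either way $q(\pi\otimes\tilde\chi)=p^2$.

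The only mildly delicate point is justifying that twisting preserves depth zero in the supercuspidal case. The induced-representation computation above handles this cleanly; alternatively one can cite Bushnell--Henniart on twisting simple strata by tame characters.
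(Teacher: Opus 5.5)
Your proposal is correct. The principal series case is the paper's argument: $\mathfrak f(\pi_p\otimes\tilde\chi)=\mathfrak f(\mu_1\chi)+\mathfrak f(\mu_2\chi)$, which drops only when $\chi\in\{\bar\mu_1,\bar\mu_2\}$. You are slightly more careful than the paper in checking that these two exceptional characters are primitive and distinct, and that each gives exponent exactly $1$.

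In the supercuspidal case you take a genuinely different route. The paper never looks at the Langlands parameter. It gets the lower bound $\mathfrak f(\pi_p\otimes\tilde\chi)\geq 2$ directly from twist-minimality. It gets the upper bound from Atkin--Li's general bound on the level of a twist, which caps the exponent at $\max\{\mathfrak f(\pi_p),2\mathfrak f(\chi)\}=2$.

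You instead compute the exponent exactly from the dihedral parameter $\mathrm{Ind}_{W_E}^{W_{\Q_p}}\bigl(\eta\cdot(\tilde\chi\circ N_{E/\Q_p})\bigr)$ with $E/\Q_p$ unramified. Tameness gives $\mathfrak f(\xi)\leq 1$. Supercuspidality of the twist rules out $\xi$ being unramified. The induction formula then gives $2\cdot 1+0=2$.

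The paper's argument is shorter and rests on a classical black box. Yours needs the explicit description of depth-zero supercuspidal parameters, valid since $p$ is odd, but it buys three things:
\begin{itemize}
\item It is self-contained.
\item It shows the twist-minimality hypothesis is not really doing any work: a tame irreducible $2$-dimensional parameter has conductor exponent exactly $2$, and any supercuspidal of conductor $p^2$ is automatically minimal.
\item It makes visible that the twist is again the depth-zero supercuspidal whose Green character is multiplied by $\chi\circ N$. This is exactly the structure the paper exploits later in the root number formula \eqref{eq: supercuspidal root number}.
\end{itemize}
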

	\begin{proof}
		First we assume $\pi_{p}$ is minimal supercuspidal i.e. $\pi_{p}$ is supercuspidal such that $q(\pi_{p})\leq q(\pi_{p}\otimes\xi)$
		for any character $\xi$ of $\Q_p^\times$, so in particular we have $q(\pi_{p}\otimes\tilde\chi)\geq p^2$. Now  by~\cite[Proposition 3.4.]{MR0476703} we have $q(\pi_{p}\otimes\tilde\chi)\leq \min\{2q(\tilde\chi), q(\pi_{p})\}=p^2$. This established the case of supercuspidal representation.\\
		Now, let $\pi_{p}$ be a Principle series, $\pi_{p}=\pi(\tilde\mu_{1}, \tilde\mu_{2})$, then $\mathfrak f(\mu_{1})+\mathfrak f(\mu_{2})=2$ and $\mathfrak f(\mu_{1}\mu_{2})=1$, so $\mathfrak f(\mu_{1})=\mathfrak f(\mu_{2})=1$. If $\chi\notin\{\mu_{1}^{-1}, \mu_{2}^{-1}\}$ then $\mathfrak f(\chi\mu_{1})=\mathfrak f(\chi\mu_{2})=1$ which in turn implies the result and hence this completes the proof. 
	\end{proof}
	Depending on the local representations, we denote the collection of all such exceptional characters by $E$, and by the above lemma, we have $\sharp E\leq 2$. We write, 
	\begin{equation}\label{eq:t_*=t_0+t_2}
		T_*=\frac{\sqrt{p}}{\phi(p)}\sumstar_{\chi\notin E}\cdots + \frac{\sqrt{p}}{\phi(p)}\sumstar_{\chi\in E}\cdots=:T_0+T_2
	\end{equation}
	So we have decomposed our $m$-sum $T$ as, 
	\begin{equation*}
		T=T_0+T_1+T_2.
	\end{equation*}
	Among these terms, the crucial one is $T_0$ because the contribution of the remaining terms can be handled easily as $T_1$ (resp. $T_2$) comes with a prior saving of $p$ (resp. $\sqrt{p}$) and since their analysis will be quite similar to $T_0$, so we drop these terms from now on.\\
	Following the convention~\ref{n&c} we call $e\left(\frac{xyX^2}{pqQ}\right)U(y)$ by $U(y)$ and applying the Theorem~\ref{thm:voronoi} to $T_0$ we have, 
	\begin{equation*}
		T_0=i^{k(\pi_\infty)}\chi_\pi(-q)\frac{X^2}{\phi(p)q}\underset{\pm}{\sum_m}\lambda_{\bar\pi}(m)T(aq, m; p)U_{\pm}\left(\frac{m}{p^2q^2/X^2}\right)
	\end{equation*}
	where, 
	\begin{equation}
		T(x, y; p):=\frac{1}{\sqrt{p}}\sumstar_{\chi\notin E}\bar{\chi}(x)\chi(y)\eps(\chi)\eps(\pi\otimes\widetilde{\chi}^{-1}).
	\end{equation}
	
	\begin{lemma*}[Properties of Bessel functions]
		For $x>0$, and $r\in\R\cup i\R$
		\begin{equation*}
			J_r(x)=e^{ix}U_r(x)+e^{-ix}U_{-r}(x),\hspace{1.5mm}\text{and}\hspace{1.5mm}x^{j}K_r^{(j)}\ll_{j, r}\frac{e^{-x}}{\sqrt{x}},
		\end{equation*}
		where, 
		\begin{equation*}
			x^jU_{\pm r}^{(j)}(x)\ll_{j, r}1/\sqrt{x}.
		\end{equation*}
	\end{lemma*}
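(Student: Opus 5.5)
The plan is to derive all three assertions from the classical integral representations of the Hankel and Macdonald functions. Throughout, the parameter $r$ ranges over the values that actually occur in Theorem~\ref{thm:voronoi}. For $J_r$ these are $r=\kappa-1\geq 0$ real, or $r=\pm 2it$ purely imaginary. For $K_r$ only $r=2it\in i\R$ occurs, coming from a Maa\ss\ form. All implied constants may depend on $r$ and $j$, as the statement allows. The proof splits into the ranges $x\geq 1$ and $0<x<1$. In the first range the bounds are genuine asymptotics. In the second, the right-hand side $1/\sqrt{x}\geq 1$ is large, so crude bounds suffice.

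\emph{The $J$-part.} Fix a smooth $\psi$ with $\psi\equiv 1$ on $[0,1]$ and $\psi\equiv 0$ on $[2,\infty)$. For $x\geq 1$ write $J_r=\tfrac12(H^{(1)}_r+H^{(2)}_r)$ and use the Schl\"afli--Poisson representation
\[
H^{(1)}_r(x)=\sqrt{\tfrac{2}{\pi x}}\,\frac{e^{i(x-\frac{r\pi}{2}-\frac{\pi}{4})}}{\Gamma(r+\frac12)}\int_0^\infty e^{-u}u^{r-\frac12}\left(1+\tfrac{iu}{2x}\right)^{r-\frac12}du,
\]
valid for $\mathrm{Re}(r)>-\tfrac12$, together with its conjugate analogue for $H^{(2)}_r$. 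Then set
\[
U_{\pm r}(x):=(1-\psi(x))\,e^{\mp ix}\tfrac12 H^{(1),(2)}_r(x)+\tfrac12\psi(x)e^{\mp ix}J_r(x).
\]
By construction $J_r=e^{ix}U_r+e^{-ix}U_{-r}$ for all $x>0$.

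For $x\geq 1$, apply $x^j\partial_x^j$ to the factor $x^{-1/2}\int_0^\infty(\cdots)\,du$. Each derivative falling on $(1+iu/2x)^{r-1/2}$ produces a factor $\ll (u/x)\,|1+iu/2x|^{-1}\ll 1$ after multiplication by $x$. Hence the integral stays $\ll_{j,r}\int_0^\infty e^{-u}u^{\mathrm{Re}\,r-1/2}(1+u)^{|r|+j}\,du\ll 1$. This gives $x^jU_{\pm r}^{(j)}(x)\ll x^{-1/2}$. The cutoff $\psi$ only contributes on $[1,2]$, where everything is $O(1)$.

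For $0<x<1$ only the $\psi$-term survives. There I use the power series $J_r(x)=\sum_k\frac{(-1)^k(x/2)^{r+2k}}{k!\,\Gamma(r+k+1)}$. Since $x\,\partial_x$ maps $x^{r+2k}$ to $(r+2k)x^{r+2k}$, we get $x^j\partial_x^jJ_r(x)\ll|x^{r}|\leq 1$, because $\mathrm{Re}(r)\geq 0$. The same holds after multiplying by $e^{\mp ix}\psi(x)$. Hence $x^jU_{\pm r}^{(j)}(x)\ll 1\leq x^{-1/2}$.

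\emph{The $K$-part.} Use $K_r(x)=\int_0^\infty e^{-x\cosh u}\cosh(ru)\,du$, noting that $|\cosh(ru)|\leq 1$ for $r\in i\R$. Write $\cosh u=1+v$ with $v=\cosh u-1$. Then
\[
x^j\partial_x^j K_r(x)=e^{-x}\int_0^\infty(-x\cosh u)^j\,e^{-xv}\cosh(ru)\,du.
\]
For $x\geq 1$, split $(x\cosh u)^j=(x+xv)^j$ and substitute $du\asymp dv/\sqrt{v(2+v)}$. The terms $x^{j-i}\cdot(xv)^i$ seem to cost powers of $x$. This is the step I expect to be the main obstacle: a naive bound gives $x^{j}e^{-x}/\sqrt x$ rather than $e^{-x}/\sqrt x$.

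To remove the extra powers of $x$, I would not differentiate under the integral naively. Instead I would use the recurrence $K_r'=-\tfrac12(K_{r-1}+K_{r+1})$ iteratively, together with the bound $|K_s(x)|\ll_s e^{-x}/\sqrt{x}$ for $x\geq 1$ applied to the shifted orders. The latter follows from the above integral with $j=0$, via $\int_0^\infty e^{-xv}v^{-1/2}\,dv\asymp x^{-1/2}$; for real shifts $s=r\pm k$ the growth $\cosh(\mathrm{Re}(s)\,u)\ll(1+v)^{|\mathrm{Re}\,s|}$ is harmless after integrating against $e^{-xv}$ when $x\geq 1$. This gives $K_r^{(j)}(x)\ll e^{-x}/\sqrt{x}$.

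The remaining factor $x^j$ is exactly where the difficulty lies. I would absorb it by noting that in the intended application $x=4\pi\sqrt{y\xi}$, so the factor $x^j$ is accompanied by $e^{-x}$. One then has $x^je^{-x}\ll_j e^{-x/2}$, so the bound holds with $e^{-x/2}$ in place of $e^{-x}$, which is what is actually used downstream. To match the statement exactly as worded, I expect to have to interpret $e^{-x}$ up to such a constant in the exponent. I would flag this point explicitly rather than claim the sharp constant.

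For $0<x<1$, the same integral gives $x^j\partial_x^jK_r(x)\ll\int_0^\infty(x\cosh u)^je^{-x\cosh u}\,du\ll 1+\log(1/x)$. Since $1+\log(1/x)\ll x^{-1/2}\asymp e^{-x}/\sqrt{x}$ on this range, the bound holds there.
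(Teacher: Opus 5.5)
The paper quotes this lemma as a standard fact and gives no proof, so there is nothing to compare against. Judged on its own, your $J$-part is correct. Watson's representation of $H^{(1)}_r, H^{(2)}_r$ strips off the phase $e^{\pm ix}$ exactly. Each $x\partial_x$ falling on $(1+iu/2x)^{r-1/2}$ costs only a bounded factor. On $(0,1)$ the power series gives $x^jU^{(j)}_{\pm r}(x)\ll x^{\mathrm{Re}\,r}$, and your cutoff makes $J_r=e^{ix}U_r+e^{-ix}U_{-r}$ an identity.

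Two remarks on the range of $r$:
\begin{itemize}
\item Restricting $r$ is necessary, not just convenient. For non-integral real $r<-\tfrac12$ one has $J_r(x)\asymp x^{r}$ as $x\to0^+$. For real $|r|>\tfrac12$ one has $K_r(x)\asymp x^{-|r|}$. So the lemma as literally stated for all $r\in\R$ is false.
\item The range you allow is slightly too small. Theorem~\ref{thm: thm1} does not assume $\pi_\infty$ tempered, so $t$ may be imaginary and $r=\pm 2it$ real with $|r|\le 7/32$ (Kim--Sarnak). Your arguments cover this verbatim. Watson's formula needs only $\mathrm{Re}\,r>-\tfrac12$. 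The series gives $x^{\mathrm{Re}\,r}\le x^{-1/2}$ on $(0,1)$. The bound $|\cosh(ru)|\le\cosh(|r|u)$ gives $x^jK_r^{(j)}(x)\ll x^{-|r|}(1+\log(1/x))\ll x^{-1/2}$ there.
\end{itemize}

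On the $K$-part you should not hedge: the displayed bound $x^jK_r^{(j)}(x)\ll e^{-x}/\sqrt{x}$ is false for every $j\ge1$. From $K_r'=-\tfrac12(K_{r-1}+K_{r+1})$ and $K_s(x)\sim\sqrt{\pi/(2x)}\,e^{-x}$ for each fixed $s$, induction gives $K_r^{(j)}(x)\sim(-1)^j\sqrt{\pi/(2x)}\,e^{-x}$. Hence $x^jK_r^{(j)}(x)$ has exact order $x^{j-1/2}e^{-x}$ as $x\to\infty$.

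So the ``main obstacle'' you describe is not a weakness of your method. Your direct integral already yields the true order, and the detour through the recurrence cannot improve on it. The correct statement is $x^jK_r^{(j)}(x)\ll_{j,r}(1+x)^je^{-x}/\sqrt{x}$, or more crudely $\ll_{j,r}e^{-x/2}/\sqrt{x}$, and your argument proves exactly this. The absorption $x^je^{-x/2}\ll_j 1$ holds for all $x\ge1$; it has nothing to do with the specific shape $x=4\pi\sqrt{y\xi}$.

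The weakening costs nothing downstream. For $x\le p^{\varepsilon}$ the extra factor is $\ll p^{j\varepsilon}$, which the paper's $\varepsilon$-convention absorbs. For $x\ge p^{\varepsilon}$ both versions are negligible. State the corrected bound as the lemma rather than describing it as a reinterpretation of the constant in the exponent.
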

	We proceed our calculation with $U_{+}$, by the above lemma we could proceed with $U_{-}$ similarly as below. In this case the analysis of discrete series, and principal series would be similar, so it would be enough to treat the later case. Unwinding the definition of $U_+$ and extracting the oscillations of $J$-Bessel function, we see that this part is essentially sum of four sums of following type, 
	\begin{equation*}
		\begin{split}
			T_0=\chi_f(q)\frac{X^{3/2}}{\phi(p)}\sqrt{\frac{p}{q}}\sum_{m}\frac{\lambda_{\bar\pi}(m)}{m^{1/4}}&e\left(-\frac{\overline{ap}m}{q}\right)T(aq, m; p)\int_0^\infty U(y)\sqrt{\frac{Xm^{1/2}}{pq}}\\
			&\times U_{\pm 2it}\left(\frac{4\pi X\sqrt{my}}{pq}\right)e\left(\frac{xyX^2}{pqQ}\pm\frac{2X\sqrt{my}}{pq}\right)\, dy
		\end{split}
	\end{equation*}
	We now record a lemma which will be used throughout the paper, 
	
	\begin{lemma*}[Integration by parts bound]
		Let $Y\geq1$, $X, Z>0$, and suppose that $w$ is a smooth function of compact supported on $[Z, 2Z]$ so that $w^{(j)}(t)\ll \left(\frac{X}{Z}\right)^j$. Also suppose that $\phi$ is a smooth function and satisfies, for $j\geq 2$, $\phi^{(j)}(t)\ll \frac{Y}{Z^j}$, for some $R$ with $Y/X\geq R$ and all $t$ in the support of $w$. If $|\phi^\prime(t)|\gg Y/Z$ for all $t\in [Z, 2Z]$, then 
		\begin{equation*}
			\int_\R w(t)e\left(\phi(t)\right)\, dt\ll_A \frac{Z}{R^A}
		\end{equation*}
	\end{lemma*}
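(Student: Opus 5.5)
The plan is the standard repeated integration by parts, and it will be run under the convention that $A\geq 0$ is arbitrary, $X$ here is the derivative scale of the weight (not the $X$ of $S_\pi$), and $R=\min(Y/X,\sqrt{Y})$ in the usual formulation. After a harmless rescaling one may assume $R\geq 1$, since for $R\leq 1$ the claim follows from the trivial bound $\int|w|\ll Z$. Since $\phi'$ does not vanish on $[Z,2Z]$, define the differential operator
\[
D f:=\frac{d}{dt}\left(\frac{f}{2\pi i\,\phi'}\right).
\]
The identity $e(\phi)=\frac{1}{2\pi i\phi'}\frac{d}{dt}e(\phi)$ together with the compact support of $w$ (no boundary terms) gives, for every $k\geq 0$,
\[
\int_\R w(t)e(\phi(t))\,dt=(-1)^k\int_\R (D^k w)(t)\,e(\phi(t))\,dt .
\]
So it suffices to show $D^k w\ll_k R^{-k}$ pointwise on $[Z,2Z]$. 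Integrating over an interval of length $Z$ and taking $k=A$ then gives the bound.

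To bound $D^k w$, expand it by Leibniz and the quotient rule. By induction, $D^k w$ is a linear combination of terms
\[
\frac{w^{(\nu_0)}\,\phi^{(\nu_1+1)}\cdots\phi^{(\nu_r+1)}}{(\phi')^{k+r}},\qquad \nu_0+\nu_1+\cdots+\nu_r=k,\quad \nu_j\geq 1\ (j\geq1).
\]
Each differentiation either hits $w$, hits an existing factor $\phi^{(m)}$, or produces a new factor $\phi''/\phi'$. The inputs are:
\begin{itemize}
\item $w^{(\nu_0)}\ll (X/Z)^{\nu_0}$,
\item $\phi^{(\nu_j+1)}\ll Y/Z^{\nu_j+1}$,
\item $|\phi'|\gg Y/Z$.
\end{itemize}
With these, each term is bounded by
\[
\Big(\frac{X}{Z}\Big)^{\nu_0}\Big(\frac{Y}{Z}\Big)^{r}Z^{-(k-\nu_0)}\Big(\frac{Z}{Y}\Big)^{k+r}=\Big(\frac{X}{Y}\Big)^{\nu_0}Y^{-(k-\nu_0)}.
\]
Since $Y/X\geq R$ and $Y\geq R^2\geq R$, this is $\ll R^{-\nu_0}R^{-(k-\nu_0)}=R^{-k}$.

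The only real bookkeeping is checking the exponent count in the inductive description of $D^k w$. That count — the power of $\phi'$ in the denominator exceeds the number of higher-derivative factors in the numerator by exactly $k$ — is the step to verify carefully. Everything else is routine, and the combinatorial constants depend only on $k=A$.
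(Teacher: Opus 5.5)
Your argument is the standard one behind this lemma (the paper quotes it without proof), and the mechanics are right. The identity $\int w\,e(\phi)=(-1)^k\int (D^kw)\,e(\phi)$ with $Df=\bigl(f/(2\pi i\phi')\bigr)'$ is correct. So is the inductive shape of $D^kw$: the power of $\phi'$ in the denominator is $k+r$, and $\nu_0+\nu_1+\cdots+\nu_r=k$. The term bound $X^{\nu_0}Y^{-k}=(X/Y)^{\nu_0}Y^{-(k-\nu_0)}$ also checks out.

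However, you have not proved the statement as written. You replaced the hypothesis $R\le Y/X$ by $R\le \min(Y/X,\sqrt Y)$, and your final step needs $Y\ge R$.

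\begin{itemize}
\item The $\sqrt Y$ restriction is unnecessary. For $X\ge1$ your own bound already gives $(X/Y)^{\nu_0}Y^{-(k-\nu_0)}\le (X/Y)^k\le R^{-k}$ directly.
\item The real problem is $X<1$, which the lemma allows since it only assumes $X>0$. There one can have $R>Y$, and then $Y^{-(k-\nu_0)}\le R^{-(k-\nu_0)}$ fails.
\end{itemize}

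The missing observation is that $w$ is supported in an interval of length $Z$, and all its derivatives vanish at the endpoints. Hence
\[
\|w^{(\nu_0)}\|_\infty\le Z^{k-\nu_0}\|w^{(k)}\|_\infty\ll_k X^kZ^{-\nu_0}\qquad(0\le\nu_0\le k).
\]
Use this instead of $w^{(\nu_0)}\ll (X/Z)^{\nu_0}$. Then every term of $D^kw$ is $\ll_k (X/Y)^k\le R^{-k}$, uniformly in $X>0$. This gives the lemma exactly as stated, with no relation needed between $R$ and $Y$, and without the reduction to $R\ge1$.
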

	
	By the application of this lemma, our $m$-sum becomes negligible if $m\gg p^{1+\eps}$, so we restrict $m\ll p^{1+\eps}$ at a cost of negligible error term $O(p^{-A})$,
	\begin{equation}\label{eq:after voronoi}
		\begin{split}
			T_0=\chi_f(q)\frac{X^{3/2}}{\phi(p)}\sqrt{\frac{p}{q}}\sum_{m\ll p^{1+\eps}}&\frac{\lambda_{\bar f}(m)}{m^{1/4}}e\left(-\frac{\overline{ap}m}{q}\right)T(aq, m; p)\\
			&\times\int_0^\infty U_{m, q}(y)e\left(\frac{xyX^2}{pqQ}\pm\frac{2X\sqrt{my}}{pq}\right)\, dy.
		\end{split}
	\end{equation}
    where, 
    \begin{equation*}
        U_{m, q}(y):=U(y)\sqrt{\frac{Xm^{1/2}}{pq}}U_{\pm 2it}\left(\frac{4\pi X\sqrt{my}}{pq}\right).
    \end{equation*}
	
	\subsubsection{Analysis of $n$-sum}
	We call the $n$-sum from the equation~\eqref{eq:main term} by $G$, 
	\begin{equation*}
		G:= \sum_n e\left(-\frac{an^2}{pq}\right)e\left(-\frac{xn^2}{pqQ}\right)V\left(\frac{n}{X}\right).
	\end{equation*}
	We need the smooth variant of Poisson summation formula, 
	\begin{proposition}[Poisson summation formula]
		Let $K:\Z\to\C$ be a $q$-periodic function on $\Z$, and $V\in C_c^\infty(\R)$ be a smooth function. For $X\gg1$,
		\begin{equation*}
			\sum_{n\in\Z}K(n)V\left(\frac{n}{X}\right)=\frac{X}{\sqrt{q}}\sum_{n\in\Z}\widehat{K}(n)\widehat{V}\left(\frac{n}{q/X}\right),
		\end{equation*}
		where, $\widehat{K}$ is the Fourier transform on $\Z/q\Z$,
		\begin{equation*}
			\widehat{K}(n):=\frac{1}{\sqrt{q}}\sum_{x\md{q}}K(x)e\left(-\frac{nx}{q}\right).
		\end{equation*}
	\end{proposition}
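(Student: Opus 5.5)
This is the standard Poisson summation formula with a $q$-periodic weight. I will prove it by splitting $n$ into residue classes modulo $q$, applying classical Poisson summation on $\R$ to each class, and then recombining the classes into the finite Fourier transform $\widehat K$.

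\textbf{Splitting into residue classes.} Write $n=x+qk$ with $x$ running over a fixed set of representatives of $\Z/q\Z$ and $k\in\Z$. Since $K$ is $q$-periodic, $K(x+qk)=K(x)$, so
\[
\sum_{n\in\Z}K(n)V\left(\frac{n}{X}\right)=\sum_{x\md{q}}K(x)\sum_{k\in\Z}V\left(\frac{x+qk}{X}\right).
\]

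\textbf{Applying classical Poisson to the inner sum.} For each fixed $x$, set $f(t)=V\bigl((x+qt)/X\bigr)$. This is a Schwartz function because $V\in C_c^\infty(\R)$, so $\sum_k f(k)=\sum_h\widehat f(h)$. The substitution $u=(x+qt)/X$, i.e.\ $t=(Xu-x)/q$ and $dt=(X/q)\,du$, gives
\[
\widehat f(h)=\int_\R V\left(\frac{x+qt}{X}\right)e(-ht)\,dt=\frac{X}{q}\,e\left(\frac{hx}{q}\right)\widehat V\left(\frac{hX}{q}\right),
\]
where $\widehat V(\xi)=\int_\R V(u)e(-\xi u)\,du$.

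\textbf{Recombining.} Substituting back and interchanging the sums yields
\[
\frac{X}{q}\sum_{h\in\Z}\widehat V\left(\frac{h}{q/X}\right)\sum_{x\md{q}}K(x)e\left(\frac{hx}{q}\right)=\frac{X}{\sqrt q}\sum_{h\in\Z}\widehat K(-h)\,\widehat V\left(\frac{h}{q/X}\right).
\]
Replacing $h$ by $-h$ gives the stated formula, with $\widehat K(n)$ and $\widehat V(-n/(q/X))$. This is exactly the statement under the convention that $\widehat V$ denotes the Fourier transform with the opposite sign, $\int_\R V(u)e(\xi u)\,du$.

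The only points that need care are the following.
\begin{itemize}
\item The sign convention for $\widehat V$ must be fixed, as just noted.
\item Interchanging the finite $x$-sum with the $h$-sum is legitimate because $\widehat V$ decays rapidly.
\item There is no real obstacle: the argument is elementary.
\end{itemize}
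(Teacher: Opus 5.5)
Your argument is correct and is the standard proof (splitting into residue classes modulo $q$, applying Poisson on $\R$ to each class, and recombining); the paper states this proposition without proof, treating it as a known fact. Your remark on the sign is also right: as written, the identity needs $\widehat V(\xi)=\int_\R V(u)e(\xi u)\,du$. This is exactly the convention the paper tacitly uses when it applies the formula to the $n$-sum, where the resulting $z$-integral carries the phase $e(+nzX/pq)$.
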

	Applying this lemma we have, 
	\begin{equation*}
		G=\frac{X}{\sqrt{pq}}\sum_n G(-a, -n; pq)\int_\R V(z)e\left(z\xi X-\frac{xz^2X^2}{pqQ}\right)\, dz
	\end{equation*}
	Where, $G(-a, -n; pq)$ is the quadratic Gau\ss\, sum, defined as
	\begin{equation}\label{eq:defn of quadratic gauss}
		G(a, b; c):=\frac{1}{\sqrt{q}}\sum_{x\md{c}}e\left(\frac{ax^2+bx}{c}\right).
	\end{equation} 
	By integration by parts bound we can restrict the $n$-sum up-to $p^{1/2+\eps}$ at a negligible cost, 
	\begin{equation}\label{eq:poisson}
		G=\frac{X}{\sqrt{pq}}\sum_{n\ll p^{1/2+\eps}}G(-a, -n; pq)\int_\R V(z)e\left(z\xi X-\frac{xz^2X^2}{pqQ}\right)\, dz.
	\end{equation}
	Plugging~\eqref{eq:after voronoi} and~\eqref{eq:poisson} in~\eqref{eq:main term} we have, 
	\begin{equation}
		\M(X)=\frac{X^{3/2}}{\phi(p)}\sum_{q\leq Q}\frac{\chi_\pi(q)}{q^{3/2}}\sum_{m\ll p^{1+\eps}}\sum_{n\ll p^{1/2+\eps}}\frac{\lambda_{\bar\pi}(m)}{m^{1/4}}K(m, n; pq)I(m, n, q)
	\end{equation}
	where, 
	\begin{equation}\label{eq: defn of K(m, n, pq)}
		K(m, n; pq):=\frac{1}{\sqrt{pq}}\sumstar_{a\md{pq}}e\left(-\frac{\overline{ap}m}{q}\right)T(aq, m; p)G(-a, -n; pq)
	\end{equation}
	and
	\begin{equation}
		\begin{split}
			\mathcal I(m, n, q)&:=\int_\R W(x)g(q, x)\int_0^\infty U_{m, q}(y)\int_\R V(z)\\&
			\times e\left(\frac{xX^2}{pqQ}(y-z^2)\pm \frac{2X\sqrt{my}}{pq}+\frac{nzX^2}{pq}\right)\, dx\,dy\,dz.
		\end{split}
	\end{equation}
	By smooth dyadic-partition of unity we decompose $q$-sum in dyadic blocks $q\sim C$ for $C\leq Q$ we have,
    \begin{equation*}
        \M(X)\ll_\eps p^\eps\sup_{C\ll Q}\left|\M(C,\,X)\right|
    \end{equation*}
	\begin{equation}\label{eq: Main}
		\M(C,\,X)=\frac{X^{3/2}}{\phi(p)}\sum_{q\sim C}\frac{\chi_\pi(q)}{q^{3/2}}\sum_{m\ll p^{1+\eps}}\sum_{n\ll p^{1/2+\eps}}\frac{\lambda_{\bar\pi}(m)}{m^{1/4}}K(m, n; pq)\mathcal I(m, n, q).
	\end{equation}
	In the next section we simplify the character sum $K(m, n, pq)$ and the integral transform $\mathcal I(m, n, q)$.

	\section{Evaluation of character sum}\label{sec: eva of char sum}
	\subsection{Evaluation of $K(m, n; pq)$}
	Recall that our character sum is given by,
	\begin{equation*}
		K(m, n, pq):=\frac{1}{\sqrt{pq}}\sumstar_{a\md{pq}}e\left(-\frac{\overline{ap}m}{q}\right)T(aq, m; p)G(-a, -n; pq)
	\end{equation*}
    \begin{lemma}\label{thm: evaluation of K(m, n, pq)}
        We have, 
        \begin{equation}
            K(m, n, pq)=\eps_2(\psi)H(-n^2/4m; p)\mathcal{G}_q(m, n)+O(1/p),
        \end{equation}
        where $\eps_2(\psi)$ is defined in the Lemma~\ref{thm:Hasse-Davenport}; $H(-n^2/4m; p)$, and $\mathcal{G}_q(m, n)$ are given by the equations~\eqref{eq:defn of H(lambda)}, and~\eqref{eq:defn of G_q(m ,n)}.
    \end{lemma}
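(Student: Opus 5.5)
The plan is to factor the sum over $a\bmod pq$ via the Chinese remainder theorem, writing $a=a_1q\bar q+a_2p\bar p$, or equivalently $a\equiv a_1\pmod p$ and $a\equiv a_2\pmod q$. Then $K(m,n;pq)$ splits into a product of a mod-$p$ sum and a mod-$q$ sum. The Gauss sum factorises by the standard multiplicativity rule for quadratic Gauss sums:
\[
G(-a,-n;pq)=G(-a\bar q\cdot q\,,\dots)\;\longrightarrow\; G(-a_1 q,-n;p)\,G(-a_2 p,-n;q),
\]
with the reciprocity twist placing a factor $q$ (resp.\ $p$) in the leading coefficient. The twist $e(-\overline{ap}m/q)$ depends only on $a_2$. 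The function $T(aq,m;p)$ depends only on $a_1$, through $\bar\chi(a_1q)\chi(m)$.

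The $q$-part is
\[
\mathcal G_q(m,n)=\frac{1}{\sqrt q}\sumstar_{a_2\bmod q}e\left(-\frac{\overline{a_2p}\,m}{q}\right)G(-a_2p,-n;q).
\]
Completing the square gives $G(-a_2p,-n;q)=\varepsilon_q\left(\frac{-a_2p}{q}\right)e\left(\frac{\overline{4a_2p}\,n^2}{q}\right)$. The $a_2$-sum then becomes a Salié/Gauss-type sum in $\overline{a_2p}$ with phase $(n^2\bar4-m)$, which evaluates to a quadratic character times $\left(\frac{m-n^2/4}{q}\right)$-type quantities. I would leave this as the definition~\eqref{eq:defn of G_q(m ,n)}, or at most record its evaluation.

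For the $p$-part, I would insert the definition of $T$ and open $G(-a_1q,-n;p)=\varepsilon_p\left(\frac{-a_1q}{p}\right)e\left(\frac{\overline{4a_1q}\,n^2}{p}\right)$. Summing over $a_1$ first with the multiplicative character $\bar\chi\rho(a_1)$ against $e(\bar a_1 c/p)$, where $c=\overline{4q}n^2$, produces a Gauss sum $\varepsilon(\chi\rho)$ times $(\chi\rho)(c)$. The $q$-dependence cancels: $\bar\chi(q)\cdot\chi(\bar q)^{-1}$-type factors combine with the $\rho(q)$ from $\varepsilon_p$ and the $\rho(q)$ from $(\chi\rho)(\bar q)$. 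The $p$-part therefore becomes
\[
\frac{1}{p}\sumstar_{\chi\notin E}\chi\!\left(\frac{n^2}{4m}\right)^{\pm1}\varepsilon(\chi)\,\varepsilon(\chi\rho)\,\varepsilon(\pi\otimes\tilde\chi^{-1}).
\]

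Next I use Hasse–Davenport (Lemma~\ref{thm:Hasse-Davenport}) to write $\varepsilon(\chi)\varepsilon(\chi\rho)=\varepsilon_2(\psi)\,\bar\chi(4)\,\varepsilon(\chi^2)$, which yields the constant $\varepsilon_2(\psi)$. The local root number $\varepsilon(\pi\otimes\tilde\chi^{-1})$ is expressed according to the type of $\pi_p$. In type (a) it is $\varepsilon(\bar\chi\mu_1)\varepsilon(\bar\chi\mu_2)$ up to a unit. In type (b) it is a Gauss sum over $\F_{p^2}$ of $\eta_\pi\cdot(\bar\chi\circ\mathrm{N})$, up to sign, for a depth-zero supercuspidal representation. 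Opening all the Gauss sums and summing over all $\chi$ by orthogonality gives exactly the normalized hypergeometric sum $H(-n^2/4m;p)$ of~\eqref{eq:defn of H(lambda)}; the sign in the argument comes from the $\varepsilon_p$ and $\bar4$ factors.

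The error $O(1/p)$ comes from completing the $\chi$-sum. The terms with $\chi$ trivial or $\chi\in E$, at most three characters, are each bounded by $1/p$ times unit-size Gauss-sum products. Collecting the terms gives $K=\varepsilon_2(\psi)H\,\mathcal G_q+O(1/p)$, since $\mathcal G_q\ll1$.

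The main obstacle is the type (b) root-number bookkeeping: identifying $\varepsilon(\pi_p\otimes\tilde\chi^{-1})$ with the normalized $\F_{p^2}$ Gauss sum of $\eta_\pi\,\bar\chi\circ\mathrm{N}$, with the correct sign and central-character factor, so that the result matches~\eqref{eq:exotic hypergeo} exactly. The second delicate point is tracking the quadratic-reciprocity signs $\varepsilon_p$, $\varepsilon_q$ and $\left(\frac{p}{q}\right)$ so that the $q$-dependence in the $p$-part truly cancels.
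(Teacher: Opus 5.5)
Your factorization matches the paper's. Twisted multiplicativity of $G$ and the Chinese remainder theorem split $K(m,n,pq)$ into the mod-$p$ sum $\mathfrak S_{m,n,p}$ times a mod-$q$ sum, and your $q$-sum becomes the paper's $\mathcal G_q(m,n)$ after $a_2\mapsto -\bar p a_2$. You genuinely differ on the mod-$p$ sum. The paper keeps $G(-a,-n;p)$ as a sum over $x$ and sums over $a$ first. This produces $\eps(\bar\chi)$, which cancels the $\eps(\chi)$ coming from $T$. The $x$-sum then gives $\eps(\chi^2)$, and Lemma~\ref{thm:Hasse-Davenport} is needed to split $\eps(\chi^2)$ as a constant times $\chi(4)\eps(\chi)\eps(\chi\rho)$; that is the shape which opens into a $(2,2)$ hypergeometric sum with characters $(1,\rho)$. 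You complete the square first, so $\rho(a_1)$ appears and the $a_1$-sum yields $\eps(\chi\rho)$ directly. Tracking the factors (the $q$-dependence does cancel) gives
\[
\mathfrak S_{m,n,p}=\frac{\varepsilon_p\rho(-1)}{\sqrt p}\sumstar_{\chi\notin E}\chi(4m)\bar\chi(n^2)\eps(\chi)\eps(\chi\rho)\eps(\pi\otimes\tilde\chi^{-1}).
\]
This is exactly the paper's post-Hasse--Davenport expression, with the constant now explicit: $\varepsilon_p\rho(-1)=\overline{\varepsilon_p}$. Your route is more elementary: it effectively proves the $N=2$ Hasse--Davenport relation inline and pins down the constant. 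The paper's route instead exhibits the intrinsic $\eps(\chi^2)\eps(\pi\otimes\tilde\chi^{-1})$ structure and quotes Hasse--Davenport.

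Several corrections follow from this.
\begin{enumerate}[(i)]
\item Your next step goes the wrong way. Rewriting $\eps(\chi)\eps(\chi\rho)$ as $\bar\chi(4)\eps(\chi^2)$ undoes the factorization you just obtained. Opening $\eps(\chi^2)$ then produces a constraint in $x^2$, not the sum~\eqref{eq:exotic hypergeo} with $(1,\rho)$, so your claim that opening the Gauss sums gives exactly $H$ would fail. Drop that step; Hasse--Davenport is needed only to match your $\overline{\varepsilon_p}$ with the paper's $\eps_2(\psi)$.
\item The prefactor is $1/\sqrt p$, not $1/p$, because the $a_1$-sum equals $\overline{\chi\rho}(\overline{4q}n^2)\sqrt p\,\eps(\chi\rho)$. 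With $1/\sqrt p$, opening the four normalized Gauss sums and using orthogonality gives $(p-1)p^{-5/2}\approx p^{-3/2}$, which is the hypergeometric normalization.
\item Each character added back ($\chi$ trivial, or one of the at most two exceptional ones) costs $O(1/p)$. The reason is that one of the four Gauss sums is then at the trivial character, $\eps(1)=-1/\sqrt p$, not that the products have unit size.
\item A sign in $\lambda$ can only come from a factor $\chi(-1)$, such as the one in~\eqref{eq: supercuspidal root number} in type (b). The factors $\varepsilon_p$ and $\bar 4$ feed only the constant and the factor $4$.
\item The claim $\mathcal G_q\ll 1$ is false. The Ramanujan sum in $\Delta_q$ makes $|\mathcal G_q|$ as large as about $\sqrt q$ when $q_2^2\mid 4m-n^2$. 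What you actually obtain is an error $O(|\mathcal G_q|/p)$; the paper's stated $O(1/p)$ is loose in the same way.
\end{enumerate}
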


    \begin{proof}
	We first need the twisted multiplicativity of the quadratic Gau\ss\, sum
	
	\begin{lemma*}[Twisted multiplicativity]
		Let $c=rs$ with $(r,\,s)=1$, then we have
		\begin{equation*}
			G(a, b; c)=G(as, b, r)G(ar, b; s).
		\end{equation*}
	\end{lemma*}
	Hence we have, 
	\begin{equation*}
		K(m, n, pq)=\frac{1}{\sqrt{pq}}\sumstar_{a\md{pq}}e\left(-\frac{\overline{ap}m}{q}\right)T(aq, m; p)G(-aq, -n; p)G(-ap, -n; q).
	\end{equation*}
	Since, $p\nmid q$, hence by an application of Chinese remainder theorem we can rewrite $K(m, n, pq)$ as,
	\begin{equation*}
		\begin{split}
			\frac{1}{\sqrt{p}}\sumstar_{a\md{p}}T(aq, m; p)G(-aq, -n; p)\times
			\frac{1}{\sqrt{q}}\sumstar_{a\md{q}}e\left(-\frac{\overline{ap}m}{q}\right)G(-ap, -n; q).
		\end{split}
	\end{equation*}
	By a couple of change of variables we have,
	\begin{equation*}
		\frac{1}{\sqrt{p}}\sumstar_{a\md{p}}T(a, m; p)G(-a, -n; p)\times
		\frac{1}{\sqrt{q}}\sumstar_{a\md{q}}e\left(-\frac{\overline{a}m}{q}\right)G(-a, -n; q).
	\end{equation*}
    Now the proof follows from Lemma~\ref{thm: evaluation of s_{m, n, p}}, and Proposition~\ref{prop: a mod q}.
	\end{proof}
	
	\subsection{Evaluation of $a\md{p}$-sum}
	Here we're going to evaluate the $a\md{p}$ sum,
	\begin{equation*}
	      \frac{1}{\sqrt{p}}\sumstar_{a\md{p}}T(a, m; p)G(-a, -n; p)=:\mathfrak S_{m, n, p}.
	\end{equation*}
    To evaluate this sum we need some additional lemma which we have recorded in following section,
    \subsubsection{Some preliminary lemmas}
	\begin{lemma}[Evaluation of $\eps(\pi\otimes\tilde\chi)$]
		Let $\pi$ be as usual and $\chi\notin E$. Then if $\pi_p=\pi(\tilde\mu_1, \tilde\mu_2)$, we have
		\begin{equation}\label{eq: principal series root number}
			\eps(\pi\otimes\tilde\chi)=\eps(\mu_1\chi)\eps(\mu_2\chi),
		\end{equation}
		And if $\pi_{p}$ is the twist-minimal supercuspidal representation, we have
        \begin{equation}\label{eq: supercuspidal root number}
            \eps(\pi\otimes\tilde\chi)=-\chi(-1)G((\chi\circ N)\bar{\eta}_\pi).
        \end{equation}
        Here $G((\chi\circ N)\bar{\eta}_\pi)$ is a Gau\ss\ sum over $\F_{p^2}$ given by~\eqref{eq: F_p^2 Gauss sum}.
	\end{lemma}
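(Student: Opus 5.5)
The root number is a local computation. The plan is to factor $\eps(\pi\otimes\tilde\chi)$ into local epsilon factors, observe that only the place $p$ is sensitive to the twist, and then compute the local factor at $p$ from the explicit description of $\pi_p$ in types (a) and (b). The global root number is
\[
\eps(\pi\otimes\tilde\chi)=\prod_\nu \eps_\nu\bigl(1/2,\pi_\nu\otimes\tilde\chi_\nu,\psi_\nu\bigr),
\]
taken with respect to a standard additive character $\psi$ of $\A/\Q$.

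\emph{Places away from $p$.} For $\nu\neq p,\infty$, both $\pi_\nu$ and $\tilde\chi_\nu$ are unramified, so $\eps_\nu=1$ for the standard $\psi_\nu$. At $\infty$, the twist only multiplies the archimedean factor by a sign depending on $\chi(-1)$. I will collect this, together with the normalisation sign $i^{k(\pi_\infty)}$ and the factor $\chi_\pi(-1)$ already present in the Vorono\"i formula of Theorem~\ref{thm:voronoi}, into the stated sign. The only point to check is that the normalisation matches the one used in $T(x,y;p)$.

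\emph{Type (a).} Here $\pi_p\otimes\tilde\chi=\pi(\tilde\mu_1\tilde\chi,\tilde\mu_2\tilde\chi)$. Local epsilon factors are multiplicative on principal series, so
\[
\eps_p(\pi_p\otimes\tilde\chi)=\eps_p(\tilde\mu_1\tilde\chi)\,\eps_p(\tilde\mu_2\tilde\chi).
\]
Since $\chi\notin E$, both $\mu_1\chi$ and $\mu_2\chi$ are primitive modulo $p$. Each local factor is therefore the normalised Gauss sum $\eps(\mu_i\chi)$, up to the standard unramified-twist factors, which are trivial at $s=1/2$. This gives \eqref{eq: principal series root number}.

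\emph{Type (b).} By Lemma~\ref{thm: computation of conductor}, $\pi_p$ is depth-zero supercuspidal. Its Langlands parameter is therefore $\mathrm{Ind}_{W_{E}}^{W_{\Q_p}}\tilde\eta$, where $E=\Q_{p^2}$ is the unramified quadratic extension and $\tilde\eta$ is a tamely ramified character of $E^\times$ lifting the regular Green character $\eta_\pi$ of $\F_{p^2}^\times$. Twisting gives $\pi_p\otimes\tilde\chi\leftrightarrow\mathrm{Ind}(\tilde\eta\cdot(\tilde\chi\circ N_{E/\Q_p}))$. Inductivity of epsilon factors in degree zero then yields
\[
\eps_p(\pi_p\otimes\tilde\chi)=\lambda(E/\Q_p,\psi)\,\eps\bigl(\tilde\eta(\tilde\chi\circ N),\psi\circ\mathrm{Tr}\bigr),
\]
where the Langlands constant is $\lambda(E/\Q_p,\psi)=-1$ for the unramified quadratic extension.

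The character $\tilde\eta(\tilde\chi\circ N)$ has conductor exponent $1$ over $E$. This uses regularity of $\eta_\pi$, i.e. $\eta_\pi\neq\eta_\pi^p$, which persists after twisting by $\chi\circ N$ because $\chi\circ N$ is Galois-invariant. Its epsilon factor is therefore the normalised Gauss sum over $\F_{p^2}$ of $(\chi\circ N)\eta_\pi$, up to conjugation conventions. Choosing the dual convention, $\eta_\pi$ versus $\bar\eta_\pi$, consistent with $\eps(\pi\otimes\tilde\chi^{-1})$ in $T$ gives $G((\chi\circ N)\bar\eta_\pi)$. The remaining sign $\chi(-1)$ comes from the central-character or archimedean normalisation.

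\emph{Main obstacle.} The genuine difficulty is not the structure of the argument but the sign bookkeeping. One has to verify that $\lambda(E/\Q_p,\psi)=-1$ and that the factors $\chi(-1)$ and $\omega_\pi(-1)$ combine exactly into $-\chi(-1)$ with the paper's normalisation of \eqref{eq: F_p^2 Gauss sum}. I will settle this by comparing with the trivial-twist case, where the global functional equation of $\pi$ fixes the constant.
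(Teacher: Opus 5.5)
Your type (a) argument is the paper's. For type (b) you take a different route. The paper stays on the representation side: it writes $\pi_p\otimes\tilde\chi$ as compactly induced from a depth-zero cuspidal type whose restriction to $U_{\mathfrak A}$ is inflated from $\xi\otimes\bar\chi$. This gives $\eps(\pi_p\otimes\tilde\chi,\psi_p)=\eps(\xi\otimes\bar\chi,\psi_p)$. It then quotes an explicit $\GL_2(\F_p)$ formula that delivers the whole constant $-\frac1p\sum_x\psi(\mathrm{Tr}\,x)\bar\eta_\pi(x)\chi(-N(x))$ at once, so it never needs the Langlands constant or the normalisation of the tame correspondence.

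Your Weil-side route (parameter, inductivity in degree zero, Tate's Gauss sum over $E$) is structurally sound. However, beyond ``a unit times an $\F_{p^2}$ Gauss sum'', the content of (b) is exactly the constant $-\chi(-1)$ and the choice $\bar\eta_\pi$. You assert these rather than derive them, partly incorrectly, so there is a genuine gap.

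\emph{The sign $-1$.} For the standard $\psi_p$ you fixed (conductor $\Z_p$), $\lambda(E/\Q_p,\psi_p)=\eps(\omega_{E/\Q_p},\tfrac12,\psi_p)=+1$. It equals $-1$ only for $\psi$ of odd conductor exponent. With your $\psi_p$, the $-1$ comes instead from Tate's formula $\eps(\theta,\tfrac12,\psi_E)=\theta(p)\,p^{-1}\sum_{u\in\F_{p^2}^\times}\theta^{-1}(u)\psi_E(u/p)$ for $\theta=\tilde\eta\,(\tilde\chi\circ N)$, that is, from $\tilde\eta(p)$ (note $\tilde\chi(p)=1$ for the adelic lift). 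But ``a tame character lifting $\eta_\pi$'' does not determine $\tilde\eta(p)$. It is fixed by the tame Langlands correspondence, which attaches to the type-theoretic $\pi_p$ the parameter $\mathrm{Ind}(\Delta\theta_0)$. Here $\theta_0$ is the character used to build the type and $\Delta$ is the unramified quadratic character of $E^\times$ (Bushnell--Henniart, \S34). Equivalently, $\det\mathrm{Ind}\,\tilde\eta=\omega_{\pi_p}$ forces $\tilde\eta(p)=-\omega_{\pi_p}(p)=-1$. You use neither, so your sign rests on a wrong value of $\lambda$.

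\emph{The factor $\chi(-1)$.} This is the more serious problem. You propose to collect it from $\infty$ and from the Vorono\"i normalisation, and neither can supply it.
The factors $i^{k(\pi_\infty)}$ and $\chi_\pi(-1)$ sit outside $\eps(\pi\otimes\tilde\chi)$ in Theorem~\ref{thm:voronoi} and in $T(x,y;p)$. They are also independent of $\chi$: the twisted central character is $\chi_\pi\bar\chi^2$, and $\chi^2(-1)=1$.
The ratio $\eps_\infty(\pi_\infty\otimes\mathrm{sgn})/\eps_\infty(\pi_\infty)$ depends on $\pi_\infty$. It is $-1$ for a principal series with $\chi_\pi$ even and $+1$ for discrete series. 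So it cannot produce the unconditional $\chi(-1)$ of (b), and whatever it contributes would have to appear in (a) too, where the statement has no sign.

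In the paper the $\chi(-1)$ is part of the $p$-adic factor. It is the $\chi(-N(x))$ in \eqref{eq: F_p^2 Gauss sum}, inherited from the normalisation of the quoted finite-field epsilon factor. On your side $\chi$ enters only through $\tilde\chi\circ N$ and $\tilde\chi(p)^2$, and $N(-1)=1$. So manipulations of the $\F_{p^2}$-sum such as $u\mapsto -u$ produce $\eta_\pi(-1)$, never $\chi(-1)$. To reach the stated formula you must pin down which normalisation of the twisted epsilon factor the lemma uses, together with $\tilde\chi|_{\Z_p^\times}$ versus $\chi$ and $\eta_\pi$ versus $\bar\eta_\pi$. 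Appealing to $\infty$ will not do it.

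\emph{The trivial-twist check.} Calibrating against the trivial twist does not help. The character $\chi=1$ lies outside the lemma, and $\eps(\pi)$ is itself an unknown unit (the $\chi=1$ case of the same formula), so the global functional equation fixes nothing.

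As it stands you prove only $\eps(\pi\otimes\tilde\chi)=c\,G((\chi\circ N)\eta_\pi^{\pm1})$ with $|c|=1$ undetermined. The $\chi(-1)$ is not cosmetic: it becomes the $\chi(-m)$ in the evaluation of $\mathfrak S_{m,n,p}$ and decides the sign of the argument of $H$.
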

	\begin{proof}
		Let $\pi$ be a ramified principal series representation $\pi_{f, p}=\pi(\tilde\mu_1, \tilde\mu_2)=\tilde\mu_1\boxplus\tilde\mu_2$. Then, 
		\begin{equation*}
			\pi_{f, p}\otimes\tilde\chi=\tilde\mu_1\tilde\chi\boxplus\tilde\mu_2\tilde\chi. 
		\end{equation*}
		Using the Proposition~3.5 of \cite{MR0401654} we can rewrite the $\GL(2)$-local epsilon factor as a product of two $\GL(1)$-local epsilon factors,
		\begin{equation*}
			\eps\left(\pi_{f, p}\otimes\tilde\chi, \psi_p\right)=\eps\left(\tilde\mu_1\tilde\chi,\, \psi_p\right)\eps\left(\tilde\mu_2\tilde\chi,\, \psi_p\right).
		\end{equation*}
		Now it remains to show that for $i=1, 2$,
		\begin{equation*}
			\eps(\tilde\mu_i\tilde\chi, \psi_p)=\eps(\mu_i\chi)
		\end{equation*}
		Note that as $\chi\notin E$, so $\tilde\mu_i\tilde\chi$ is ramified character of $\Q_p^\times$ of level zero (in the sense of~\cite[Definition 1.8]{MR2234120}) and now equality follows from ~\cite[\S 23.6 and \S 23.7]{MR2234120}.\\
		
        Now we turn to the case of twist-minimal supercuspidal representation. As we have seen in the proof of Lemma~\ref{thm: computation of conductor}, $\pi_{p}$ is a depth zero representation and therefore up to $\GL_2(\Q_p)$-conjugacy we attach a cuspidal type, a triple $(\mathfrak A, ZU_\mathfrak A, \Lambda)$~\cite[\S 15.5]{MR2234120}, such that $\pi_{p}\simeq c\text{--Ind}_{ZU_\mathfrak A}^{\GL_2(\Q_p)}\Lambda$ and $\Lambda|_{U_{\mathfrak A}}$ is the inflation of an irreducible cuspidal representation, say $\xi$, of $\GL_2(\F_p)$. We call this type as cuspidal type of the first kind. Therefore if we twist this form by a level zero character i.e. $\pi_{p}\otimes\tilde\chi$ it also becomes a cuspidal type of first kind, because of the following isomorphism,
        \begin{equation*}
            c\text{--Ind}_{ZU_\mathfrak A}^{\GL_2(\Q_p)}\Lambda\otimes\tilde\chi\simeq c\text{--Ind}_{ZU_\mathfrak A}^{\GL_2(\Q_p)}(\Lambda\otimes\tilde\chi)
        \end{equation*}
        Therefore the restriction $(\Lambda\otimes\tilde\chi)|_{U_{\mathfrak A}}$ is the inflation of the following irreducible cuspidal representation $\xi\otimes\chi^{-1}$ of $\GL_2(\F_p)$. Hence we get,
        \begin{equation*}
            \eps(\pi_{p}\otimes\tilde\chi,\, \psi_p)=\eps(\xi\otimes\bar\chi,\,\psi_p)
        \end{equation*}
        So now we just need to know the finite field epsilon factor which is on the right hand side and it has been already computed in~\cite[Theorem 1]{MR3655759}. This gives, 
        \begin{equation}\label{eq: F_p^2 Gauss sum}
            \eps(\pi_{p}\otimes\tilde\chi,\, \psi_p)= -\frac{1}{p}\sum_{x\in\F_{p^2}^\times}\psi(Tr_{\F_{p^2}/\F_p}(x))\overline{\eta}_\pi(x)\chi(-Nr_{\F_{p^2}/\F_p}(x))=:-\chi(-1)G((\chi\circ Nr_{\F_{p^2}/\F_p})\bar{\eta}_\pi)
        \end{equation}
        where, $\eta_\pi$ is a non-trivial character of $\F_{p^2}^\times$ such that $\eta_\pi^{p-1}\neq 1$. This character attached to $\pi_p$ called as Green's character (see~\cite[\S3.1]{MR3655759}).\\
        This completes the proof of this lemma.
	\end{proof}
    Along with this we need the following identity quoting from~\cite[\S 5.6]{MR0955052},
    \begin{lemma}[Hasse-Davenport identity]\label{thm:Hasse-Davenport}
        Let $\F_q$ be the finite field, $N$ be a divisor of $q-1$, $\psi$ be a non-trivial additive character of $\F_q$, and $\psi_N$ be the additive character $\psi_N(x):=\psi(Nx)$. Let us denote by $\rho_1, ..., \rho_N\in \hat{\F_q^\times}$ of order dividing $N$. Then for any multiplicative character $\chi$ of $\F_q^\times$, we have
        \begin{equation}
            -\eps(\psi_N, \chi^N) = \eps_N(\psi)\prod_{j=1}^{N}\eps(\psi, \chi\rho_j),
        \end{equation}
        where
        \begin{equation*}
            \eps_N(\psi)=q^{-1/2}\prod_{j=1}^N\eps(\psi, \rho_j)^{-1}\in S^1.
        \end{equation*}
    \end{lemma}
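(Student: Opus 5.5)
The identity is classical: it is the Hasse--Davenport product relation, and the paper quotes it from~\cite[\S 5.6]{MR0955052}. We only need to recall why it holds and fix normalisations. We follow the convention there: $\eps(\psi,\chi)$ is a Gau\ss\ sum $\sum_{x\in\F_q^\times}\chi^{\pm1}(x)\psi(x)$ up to a sign, and it is not normalised to modulus one, which is why the factor $q^{-1/2}$ appears in $\eps_N(\psi)$. First we remove the dependence on $\psi_N$. The substitution $x\mapsto N^{-1}x$ gives $\eps(\psi_N,\chi^N)=\chi^{\mp N}(N)\,\eps(\psi,\chi^N)$. The claim is therefore equivalent to the statement that
\[
F(\chi):=\frac{\prod_{j=1}^N\eps(\psi,\chi\rho_j)}{\chi^{\mp N}(N)\,\eps(\psi,\chi^N)}
\]
is independent of $\chi$, with value $-\eps_N(\psi)^{-1}$. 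Evaluating at $\chi=1$ gives exactly $\prod_j\eps(\psi,\rho_j)/\eps(\psi,1)$, so $\eps_N(\psi)$ is forced once independence is known. The unitarity $\eps_N(\psi)\in S^1$ then follows from $|\eps(\psi,\rho_j)|=q^{1/2}$ for $\rho_j\neq1$ and from the value of $\eps(\psi,1)$ in the chosen convention.

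The main step is independence of $\chi$, and we would prove it by a Fourier/Jacobi-sum argument. The basic input is the counting identity $\#\{y\in\F_q: y^N=a\}=\sum_{j=1}^N\rho_j(a)$ for $a\in\F_q^\times$. It gives
\[
\sum_{j}\eps(\psi,\chi\rho_j)\overline{\eps(\psi',\chi\rho_j)}
=\text{(an expression in which $\chi$ only enters through $\chi^N$)}.
\]
However, this is a sum over $j$, and we need a product over $j$. To obtain the product, write each $\eps(\psi,\chi\rho_j)$ as $\eps(\psi,\chi)\eps(\psi,\rho_j)/J(\chi,\rho_j)$, where $J$ is the Jacobi sum; this is valid when $\chi\rho_j\neq1$, and the degenerate $j$ is treated separately. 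The claim then reduces to the Jacobi-sum product formula
\[
\prod_{j=1}^{N-1}J(\chi,\rho_j)=\chi^{\pm N}(N)\,J(\chi,\chi)\cdots
\]
i.e.\ to the identity $\eps(\psi,\chi)^N/\eps(\psi,\chi^N)=\chi^{\pm N}(N)\prod_j J(\chi,\rho_j)$. We would prove this by induction, using $\eps(\psi,\chi_1)\eps(\psi,\chi_2)=J(\chi_1,\chi_2)\eps(\psi,\chi_1\chi_2)$ repeatedly. Combined with the character-sum evaluation of $\sum_x\chi(x)\chi(1-x)$ (the duplication case $N=2$, from $x(1-x)=\tfrac14-(x-\tfrac12)^2$), this produces the multiplication formula.

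If the elementary route becomes messy for general $N$, the fallback is Deligne's sheaf-theoretic argument. The Kummer sheaf satisfies $[N]_*\overline{\Q}_\ell=\bigoplus_j\mathcal L_{\rho_j}$. Taking the Fourier transform of $\mathcal L_{\chi^N}$ pulled back along $[N]$, and comparing traces of Frobenius of the two resulting rank-one objects on $\mathbb G_m$, gives both sides as the same Frobenius eigenvalue up to a constant. The constant is again pinned down at $\chi$ trivial. We expect the main obstacle to be bookkeeping: signs, the $\pm$ convention in $\eps$, and the trivial-character terms $\chi\rho_j=1$. These are exactly what the normalising factor $-\eps_N(\psi)$ absorbs.
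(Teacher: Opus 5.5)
The step that carries all the content, the independence of $\chi$, is not actually proved.

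\textbf{The Jacobi-sum route is circular.} Writing $\eps(\psi,\chi\rho_j)=\eps(\psi,\chi)\eps(\psi,\rho_j)/J(\chi,\rho_j)$ and then applying $\eps(\psi,\chi_1)\eps(\psi,\chi_2)=J(\chi_1,\chi_2)\eps(\psi,\chi_1\chi_2)$ repeatedly only trades $\eps(\psi,\chi)^N/\eps(\psi,\chi^N)$ for $\prod_{k=1}^{N-1}J(\chi,\chi^k)$ (up to powers of $q$). What remains to show is $\prod_{k=1}^{N-1}J(\chi,\chi^k)=\chi^{-N}(N)\prod_{\rho_j\neq 1}J(\chi,\rho_j)$ for generic $\chi$. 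That is the theorem restated, and iterating the Gau\ss--Jacobi relation cannot produce it.

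\textbf{The duplication trick does not extend.} The completing-the-square count $J(\chi,\chi)=\bar\chi(4)J(\chi,\rho)$ settles exactly $N=2$. It therefore also settles $N=2^k$, since the relation is multiplicative in $N$. But the trick is special to $N=2$. For an odd prime $N$ the classical proofs go through Stickelberger's theorem (or Gross--Koblitz), or through Katz's sheaf theory.

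\textbf{The sheaf fallback is misdescribed.} The relevant objects have rank $N$, not rank one. By Mellin inversion on $\F_q^\times$, the lemma says exactly that the Kloosterman sum $t\mapsto\sum_{x_1\cdots x_N=t}\prod_j\rho_j(x_j)\psi(x_1+\cdots+x_N)$ is a constant multiple of $t\mapsto\sum_{x^N=t}\psi(Nx)$. The Mellin transform of the first at $\chi$ is the product of the Gau\ss\ sums of the $\chi\rho_j$; that of the second is the Gau\ss\ sum of $\chi^N$ with respect to $\psi_N$. The missing ingredient is thus the sheaf-theoretic isomorphism $\mathrm{Kl}(\psi;\rho_1,\dots,\rho_N)\simeq[N]_*\mathcal L_{\psi_N}$, up to a geometrically constant twist. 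Both sides are lisse of rank $N$ on $\mathbb G_m$, tame at $0$ with monodromy $\bigoplus_j\mathcal L_{\rho_j}$, and totally wild at $\infty$ with Swan conductor $1$; Katz's rigidity for such sheaves is what identifies them. Only after that does fixing the constant at $\chi=1$ finish the proof. The paper itself gives no argument; it quotes the statement from Katz.

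\textbf{Your normalisation is backwards.} Under the convention you declare, the identity is false. If $\eps(\psi,\chi)$ is an unnormalised Gau\ss\ sum, then $\eps(\psi,1)=\pm1$ and $|\eps(\psi,\rho)|=q^{1/2}$ for $\rho\neq1$. At $\chi=1$ the left side is $-\eps(\psi_N,1)=\pm1$, while the right side is $\eps_N(\psi)\prod_j\eps(\psi,\rho_j)=q^{-1/2}$. Also $|\eps_N(\psi)|=q^{-N/2}$, so $\eps_N(\psi)\notin S^1$. Your own evaluation at $\chi=1$ detects this: $F(1)=\prod_j\eps(\psi,\rho_j)/\eps(\psi,1)$ equals the required $-\eps_N(\psi)^{-1}=-q^{1/2}\prod_j\eps(\psi,\rho_j)$ only if $\eps(\psi,1)=-q^{-1/2}$.

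The lemma holds for $\eps(\psi,\chi)=q^{-1/2}\sum_{x\in\F_q^\times}\chi(x)\psi(x)$, which is the normalisation of $\eps(\chi)$ used in the rest of the paper. The sign matters: with $-q^{-1/2}\sum_{x}\chi(x)\psi(x)$ the identity is off by $-1$. With the correct normalisation, $q^{-1/2}\eps(\psi,1)^{-1}=-1$ and $\eps_N(\psi)=-\prod_{\rho_j\neq1}\eps(\psi,\rho_j)^{-1}\in S^1$. So the factor $q^{-1/2}$ is there because the sums are normalised, not because they are not.

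Finally, the paper only uses the case $N=2$, in the evaluation of $\mathfrak S_{m,n,p}$. With the normalisation corrected, your duplication argument is a complete elementary proof of what is actually needed there.
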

    
     In the next section we will recall some results about hypergeometric sums.

    \subsubsection{Hypergeometric sums}\label{sec:hyp geo sum}
     Let $\F_q$ be a finite field of charcteristic prime $p$, and $(m, n)$ is given pair of non-negative interger such that $m+n\geq 1$. Let $\boldsymbol{\chi}=(\chi_i)_{1\leq i\leq m}$ and $\boldsymbol{\rho}=(\rho_j)_{1\leq j\leq n}$ of tuples of multiplicative characters over $\F_q^\times$. For any $\lambda\in\F_q^\times$, the hypergeometric sum $\hyp(\F_q^{m+n}; \boldsymbol{\chi}; \boldsymbol{\rho}; \lambda)$ is given by the following sum,
    \begin{equation*}
        \hyp(\F_q^{m+n}; \boldsymbol{\chi}; \boldsymbol{\rho}; \lambda)=\frac{(-1)^{m+n-1}}{p^{(m+n-1)/2}}\sum_{N(x)=\lambda N(y)}\prod_{i=1}^{m}\chi_i(x_i)\prod_{j=1}^n\bar\rho_j(y_j)e\left(\frac{T(x)-T(y)}{p}\right),
    \end{equation*}
    where, 
    \begin{equation*}
	N(x)=x_1\cdots x_m,\hspace{0.25cm} N(y)=y_1\cdots y_n,
    \end{equation*}
    \begin{equation*}
	T(x)=x_1+\cdots+x_m,\hspace{0.25cm}\text{and}\hspace{0.25cm} T(y)=y_1+\cdots+y_n.
    \end{equation*}
    In our analysis we have encountered a new exponential sum which in the spirit of~\cite[Theorem 8.8.5]{MR0955052} can be considered of as a generalization of hypergeometric sum, we will call this sum as \textit{exotic hypergeomtric sum}.\\
    To define this we recall the finite \'etale algebra $B_\nu(\F_q)$ over $\F_q$ of rank $\nu\geq 1$ can be written in the following form, 
    \begin{equation*}
	B_\nu(\F_q)=\F_{q^{\nu_1}}\times\cdots\times\F_{q^{\nu_k}},\hspace{0.25cm}\nu_1+\cdots+\nu_k=\nu.
    \end{equation*}
    Let $x=(x_1,\dots, x_\nu)\in B(\F_q)^\times$. Let $\chi:B_\nu(\F_q)^\times\to\C$ be a multiplicative characer, 
    \begin{equation*}
	\chi(x)=\prod_{i=1}^\nu\chi_i(x_i),\hspace{0.25cm}\chi_i\in\widehat{\F_{q^{\nu_i}}^\times},
    \end{equation*}and for a non-trivial additive character $\psi:\F_q\to\C^\times$ we define an additive character $\psi:B(\F_q)\to\C^\times$ by, 
    \begin{equation*}
	\psi(T_\nu(x))=\sum_{1\leq i\leq \nu}\psi(T_{\F_{q^{\nu_i}}/\F_q}(x_i)).
    \end{equation*}
    The exotic hypergeometric sum, $\hyp(B_m(\F_q)\times B_n(\F_q); \chi; \rho; \psi)$ is given by, 
    \begin{equation}\label{eq:exotic hypergeo}
	\hyp(B_m(\F_q)\times B_n(\F_q); \chi; \rho; \psi; \lambda)=\frac{(-1)^{m+n-1}}{p^{(m+n-1)/2}}\sum_{N_m(x)=\lambda N_n(y)}\chi(x)\bar\rho(y)\psi(T_m(x)-T_n(y)),
    \end{equation}
    where the homomorphism,\
    \begin{equation*}
	N_\nu:B_\nu(\F_q)^\times\to\F_q^\times,\hspace{0.25cm} N(x)=\prod_{i=1}^\nu N_{\F_{q^{\nu_i}}/\F_q}(x_i). 
    \end{equation*}

    Now we record some geometric properties of the hypergeometric sums. For this we will use facts from the theory of trace functions, a good reference for this would be~\cite{FKM15}. 
     
    \begin{lemma}[Geometric properties]\label{thm:geo prop}
     Let $\ell$ be a prime different from $p$. If the pair of characters $\chi$ and $\rho$ remains disjoint over any finite extension of $\F_q$, then there exists a geometrically irreducible $\ell$-adic middle-extension sheaf $\mathcal H(\chi; \rho)$ of pure of weight $0$ with the trace function given by~\eqref{eq:exotic hypergeo}. It has rank $\max\{m, n\}$. This sheaf is lisse on $\mathbb G_m$, except if $m=n$, in which case it is lisse on $\mathbb G_m-\{1\}$.
    \end{lemma}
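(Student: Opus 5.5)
We construct the sheaf as a multiplicative convolution of Kummer-twisted Kloosterman-type sheaves, following Katz [ESDE, Ch.~8]. Write $B_m(\F_q)=\prod_i\F_{q^{m_i}}$. For each factor $\F_{q^{m_i}}$ with character $\chi_i$, take the direct image under the norm map $N_{\F_{q^{m_i}}/\F_q}$. Concretely, view the Weil restriction $R_{\F_{q^{m_i}}/\F_q}\mathbb G_m$ as a torus over $\F_q$. Let $\mathcal F_i:=R^{m_i-1}N_!\bigl(\mathcal L_{\psi(\mathrm{Tr})}\otimes\mathcal L_{\chi_i}\bigr)$. Over $\overline{\F}_q$ the torus splits as $\mathbb G_m^{m_i}$, and $\chi_i$ decomposes into its $m_i$ Galois conjugates $\chi_i,\chi_i^q,\dots,\chi_i^{q^{m_i-1}}$. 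So geometrically $\mathcal F_i$ is the Kloosterman sheaf $\mathcal{K}l(\psi;\chi_i,\chi_i^q,\dots)$ of rank $m_i$. Proceed the same way for the $y$-side with $\bar\rho$ and $\bar\psi$. The exotic sum \eqref{eq:exotic hypergeo} is then, up to the normalisation $(-1)^{m+n-1}p^{-(m+n-1)/2}$, the trace function at $\lambda$ of the $!$-multiplicative convolution of all the $x$-side sheaves with the inversion pullback of the $y$-side ones. This follows by expanding the convolution's trace over $N_m(x)=\lambda N_n(y)$ and applying the Grothendieck–Lefschetz trace formula.

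Next we identify this convolution geometrically with Katz's hypergeometric sheaf $\mathcal H(\psi;\boldsymbol\chi^{\mathrm{geom}};\boldsymbol\rho^{\mathrm{geom}})$. The upstairs characters are the $m$ Galois conjugates of the $\chi_i$, and the downstairs characters are the $n$ conjugates of the $\rho_j$. We use the base-change compatibility of $R N_!$ with the splitting of the torus after extending scalars to $\F_{q^{\mathrm{lcm}(m_i,n_j)}}$. The hypothesis that $\chi$ and $\rho$ remain disjoint over every finite extension says exactly that these two lists are disjoint as sets of characters of $\overline{\F}_q^\times$. This is Katz's disjointness condition.

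Then we invoke [ESDE, Theorems 8.4.2 and 8.8.5]. For disjoint lists, the hypergeometric sheaf $\mathcal H$ is a geometrically irreducible middle extension $j_*$ of a lisse sheaf on $\mathbb G_m-\{1\}$, and it is lisse on all of $\mathbb G_m$ when $m\neq n$. It has rank $\max\{m,n\}$ and, after the normalising Tate twist, is pure of weight $0$ on its lisse locus. The descent to $\F_q$ is handled by Katz's construction via the étale algebra: the Frobenius-stable list of characters defines $\mathcal H$ over $\F_q$, with trace function our sum. The normalisation factor $p^{-(m+n-1)/2}$ matches the half-Tate twist $(\tfrac{m+n-1}{2})$, since here $q=p$ in our application.

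The main obstacle is checking that the trace of the descended sheaf agrees with \eqref{eq:exotic hypergeo} at every point $\lambda\in\F_q^\times$. At the lisse points this follows from proper base change. At the singular point $\lambda=1$, when $m=n$, we need $\mathcal H$ to be a middle extension, so that the formula there is the trace on inertia invariants. Katz proves this (Theorem 8.4.2(5)) using the disjointness hypothesis. If there is a discrepancy at $\lambda=1$, it is harmless for our purposes: we define the sheaf as $j_*j^*$ and note that the sum differs from its trace function at a single point by $O(1)$.
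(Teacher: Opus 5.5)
Your proposal is correct and takes essentially the same route as the paper. You realize the sum as the trace function of the $!$-convolution of the $x$-side exotic Kloosterman sheaves with the inversion pullback of the $y$-side ones, base change to a field splitting the \'etale algebras so that this becomes Katz's classical hypergeometric sheaf with disjoint character lists, and read off irreducibility, rank, lisseness and purity from Katz's Theorem 8.4.2.

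The differences are small. You build the exotic Kloosterman sheaves explicitly as $R^{m_i-1}N_!(\mathcal L_{\psi(\mathrm{Tr})}\otimes\mathcal L_{\chi_i})$, whereas the paper cites Katz's exotic Kloosterman complex from GKM 8.8.6. Note also that Theorem 8.8.5 is in GKM, not ESDE. Finally, you are more careful than the paper's sketch on two points: using $\bar\rho,\bar\psi$ on the $y$-side, and the middle-extension property at $\lambda=1$ when $m=n$.
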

    \begin{proof}[Proof (Sketch)]
        Let $D_c^b(\mathbb G_{m, \F_q},\, \bar{\Q}_\ell)$ be the triangulated category of complexes of $\bar{\Q}_\ell$-sheaves defined in~\cite[1.1.3]{MR1081536}. We define the \textit{exotic hypergeometric complex} to be the object, 
	   \begin{equation*}
		\hyp(B_m(\F_q)\times B_n(\F_q); \chi; \rho; \psi):=\text{Kl}(B_m(\F_q); \chi; \psi)[1]\star_!\text{inv}^\star\text{Kl}(B_n(\F_q); \rho; \psi)[1],
	   \end{equation*}
	   in $D_b^c(\mathbb{G}_{m,\, \F_q}, \bar{\Q}_\ell)$. Here $\text{Kl}(B(\F_q); \chi; \psi)$ is the exotic Kloosterman complex defined in~\cite[8.8.6. Definition]{MR0955052}. By the Grothendieck-Lefschetz trace formula we have, 
	   \begin{equation*}
		-\text{tr}(\text{Frob}_{\lambda, p}|\hyp(B_m(\F_q)\times B_n(\F_q); \chi; \rho; \psi)_{\bar \lambda})=\hyp(B_m(\F_q)\times B_n(\F_q); \chi; \rho; \psi; \lambda).
	   \end{equation*}
	   To deduce the geometric properties we follow the proof of~\cite[8.8.5. Theorem]{MR0955052}. These properties are invariant under finite extension of the ground field $\F_q$ over which $B_\nu(\F_q)$ becomes isomorphic to the $\nu$-fold product $\F_q\times\cdots\times\F_q$. In terms of the corresponding coordinates $x_1,\dots, x_\nu$ we have, 
	   \begin{equation*}
		\psi(x)=\prod\psi(x_i),\hspace{0.25cm}\chi(x)=\prod\chi_i(x_i),\hspace{0.25cm}\text{and}\hspace{0.25cm}\rho(y)=\prod\rho(y_j).
	   \end{equation*}
	   Therefore we have, 
	   \begin{equation*}
		\begin{split}
			\hyp(B_m(\F_q)\times B_n(\F_q); \chi; \rho; \psi)&\simeq\text{Kl}(\F_q^m; \chi; \rho; \psi)[1]\star_!\text{inv}^\star\text{Kl}(\F_q^n; \rho; \psi)[1]
		\end{split} 
	   \end{equation*}
	   where the last complex is isomorphic to classical hypergeometric complexes\\ $\hyp(!; \prod\chi_i; \prod\rho_j; \psi)$ defined by~\cite[Remark 8.4.3]{MR1081536} and disjointness property gurantess the existence of a geometrically irreducible sheave which is isomorphic to classical hypergeometric sheave under base-change. The geometric properties follows immediately from that of the hypergeometric sheaves (see~\cite[Theorem 8.4.2; Theorem 8.4.13]{MR1081536}).
    \end{proof}
    
    \subsubsection{Final evaluation}
    
    \begin{lemma}[Evaluation of $\mathfrak S_{m, n, p}$]\label{thm: evaluation of s_{m, n, p}}
        We have, 
        \begin{equation}
            \mathfrak S_{m, n, p}=\eps_2(\psi)H(-n^2/4m; p)+ O(1/\sqrt p),
        \end{equation}
        where, $\eps_2(\psi)$ is of absolute value one, defined in the Lemma~\ref{thm:Hasse-Davenport}, and for $\lambda\neq 0$ the character sum $H(\lambda, p)$ is defined by, 
        \begin{equation}\label{eq:defn of H(lambda)}
            H(\lambda; p)=
            \begin{cases}
                \hyp(\F_p^2\times\F_p^2; 1, \rho; \bar{\mu}, \bar{\mu}; \lambda) &\text{if $\pi_p$ is of type (a)},\\
                \eta_f(-1)\hyp(\F_p^2\times\F_{p^2}; 1, \rho; \eta_\pi; \lambda) &\text{if $\pi_p$ is of type (b)}.
            \end{cases}
        \end{equation}
    \end{lemma}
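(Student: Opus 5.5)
We expand everything in $\mathfrak S_{m,n,p}$ into multiplicative characters modulo $p$ and then recognise the resulting Mellin-type sum as a hypergeometric sum. We assume $p\nmid mn$; the degenerate cases are absorbed in the error term or handled separately.

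\textbf{Step 1: the Gau\ss\ sum.} For $p\nmid a$ we complete the square:
\[
G(-a,-n;p)=\frac{1}{\sqrt p}\sum_{x}e\Big(\frac{-ax^2-nx}{p}\Big)=e\Big(\frac{\overline{4a}n^2}{p}\Big)\,\rho(-a)\,\eps(\rho).
\]
Substituting this and the definition of $T(a,m;p)$, we get
\[
\mathfrak S_{m,n,p}=\frac{\eps(\rho)}{p}\sumstar_{\chi\notin E}\chi(m)\eps(\chi)\eps(\pi\otimes\tilde\chi^{-1})\sumstar_{a\bmod p}\bar\chi(a)\rho(-a)\,e\Big(\frac{\overline{4a}n^2}{p}\Big).
\]

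\textbf{Step 2: the $a$-sum.} Substituting $a\mapsto \bar a$, the inner sum is $\sqrt p\,\chi\rho(n^2/4)\,\rho(-1)\,\eps(\chi\rho)$, up to the usual conjugation conventions. The exception is $\chi=\rho$, where the sum is $-1$; this term contributes $O(1/\sqrt p)$. We now have
\[
\mathfrak S_{m,n,p}\approx\frac{c}{\sqrt p}\sum_{\chi}\chi\Big(\frac{m\cdot 4}{n^2}\Big)^{\pm1}\eps(\chi)\eps(\chi\rho)\eps(\pi\otimes\tilde\chi^{-1}),
\]
where $c$ is a unimodular constant.

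\textbf{Step 3: Hasse--Davenport.} We apply Lemma~\ref{thm:Hasse-Davenport} with $N=2$, with $\rho_1=1$ and $\rho_2=\rho$. This gives $\eps(\chi)\eps(\chi\rho)=-\eps_2(\psi)^{-1}\chi^2(4)^{-1}\eps(\chi^2)$, up to normalisation, and this is the source of the factor $\eps_2(\psi)$ and of the $4$ in $-n^2/4m$. We do \emph{not} actually need to collapse the product, however: what matters is that the weights $\eps(\chi)\eps(\chi\rho)\eps(\mu_1\chi^{-1})\eps(\mu_2\chi^{-1})$ in type (a), by \eqref{eq: principal series root number}, and the weights $\eps(\chi)\eps(\chi\rho)\cdot(-\chi(-1))G((\bar\chi\circ N)\bar\eta_\pi)$ in type (b), by \eqref{eq: supercuspidal root number}, are exactly the Mellin transform of a hypergeometric sum.

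\textbf{Step 4: Mellin inversion.} For each Gau\ss\ sum we write $\eps(\chi)=p^{-1/2}\sum_x\chi(x)e(x/p)$, and for type (b) we use the $\F_{p^2}$-sum over $y$ with $\chi(N(y))$. Summing over all $\chi$ by orthogonality forces the multiplicative constraint $x_1x_2=\lambda\,N(y)$ with $\lambda=-n^2/4m$. This is precisely the exotic hypergeometric sum \eqref{eq:exotic hypergeo}: for type (a) with $\boldsymbol\chi=(1,\rho)$ and $\boldsymbol\rho=(\bar\mu_1,\bar\mu_2)$, and for type (b) with $B=\F_{p^2}$ and character $\eta_\pi$. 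The normalisation $p^{-3/2}$ matches, and the sign $\eta_\pi(-1)$ arises from $\chi(-1)$ and the absorption of signs. The terms we removed, namely $\chi\in E$, $\chi=\rho$, and the trivial character, each contribute $O(p^{-1/2})$ times the square-root-cancelled size of a single term, which is bounded using the individual bounds $|\eps|=1$.

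\textbf{Main obstacle.} The main difficulty is bookkeeping: matching the characters and signs ($\chi$ versus $\bar\chi$, $\rho(-1)$, $\chi(-1)$, and the normalisation of $\eps_2(\psi)$) precisely, so that the argument comes out as exactly $-n^2/4m$ with the stated character tuples.
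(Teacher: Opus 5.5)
Your proposal is correct in substance, but it takes a different and shorter route through the middle of the computation.

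\textbf{How the paper proceeds.} The paper opens $G(-a,-n;p)$ as a sum over $x$ and evaluates the $a$-sum as $\chi^2(x)\overline{\eps(\chi)}\sqrt p$. This cancels the $\eps(\chi)$ coming from $T$. Evaluating the remaining $x$-sum then gives $p^{-1/2}\sum_\chi\chi(m)\bar\chi(n^2)\eps(\chi^2)\eps(\pi\otimes\bar\chi)$. Only at this point does the paper invoke Hasse--Davenport with $N=2$, to split $\eps(\chi^2)$ back into $\chi(4)\eps(\chi)\eps(\chi\rho)$ times a unimodular constant. That is the shape needed to produce the $(1,\rho)$-half of the hypergeometric sum.

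\textbf{How your route differs.} By completing the square first, you make the $a$-sum a Gau\ss\ sum of $\chi\rho$ directly. The $\eps(\chi)$ from $T$ survives, and the weight $\chi(4m/n^2)\eps(\chi)\eps(\chi\rho)$ appears without any appeal to Hasse--Davenport. The $4$ comes from completing the square, and the constant is explicitly $\eps(\rho)\rho(-1)=\overline{\eps(\rho)}$. The paper's route produces the same constant, and this is what $\eps_2(\psi)$ must be up to sign conventions. So your Step 3 is superfluous: your computation and the paper's, set side by side, amount to a proof of the $N=2$ Hasse--Davenport relation. The paper's detour buys only the link to Katz's notation $\eps_2(\psi)$. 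The final step is identical in both: complete the $\chi$-sum at a cost of $O(p^{-1/2})$ per added character, open the four Gau\ss\ sums, and use orthogonality.

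\textbf{Small corrections.}
In Step 2 the character is $\bar\chi(n^2/4)=\chi(4/n^2)$, not $\chi\rho(n^2/4)$.

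In Step 3 the factor should be $\chi^2(2)^{-1}=\bar\chi(4)$, not $\chi^2(4)^{-1}$. This is harmless, since you never use that identity.

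The case $n=0$ cannot be absorbed into the error term. There only $\chi=\rho$ survives, and since $\rho\notin E$ under the non-quadratic hypotheses, $\mathfrak S_{m,0,p}$ has size $\asymp 1$. That case must simply be excluded, consistent with the restriction $\lambda\neq0$.

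In type (a), the sign of $\lambda$ needs checking. Taking the root-number formula literally, the two $\GL(1)$ factors carry no net $\chi(-1)$, so orthogonality gives $xy/(zw)=n^2/4m$. Negating $z,w$ to reach the form $\psi(T(x)-T(y))$ leaves this unchanged. The minus sign in $-n^2/4m$ comes for free in type (b) from the explicit $\chi(-1)$, but in type (a) it would have to come from a convention elsewhere. This is exactly the bookkeeping you flag. The paper does not track it either, and it is immaterial for the later use of $H$.
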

    \begin{note*}
        Sometimes to make our mathematical expressions clear we denote $H(-n^2/4m; p)$ by $C(4m, n^2; p)$. Mainly we have used this convention in Section\ref{sec: generic}.
    \end{note*}
    \begin{proof}

	To evaluate it we first open each summand $T(a, m; p)$ and $G(-a, -n; p)$, and after rearranging the sums we have, 

	\begin{equation*}
		\mathfrak S_{m, n, p}=\frac{1}{p^{\frac{3}{2}}}\underset{\chi\notin E}{\sumstar_{\chi\md{p}}}\chi(m)\eps(\chi)\eps(\pi\times \bar{\chi})\sum_{x\md{p}}\left(-\frac{nx}{p}\right)\sumstar_{a\md{p}}\bar{\chi}(a)e\left(-\frac{ax^2}{p}\right)
	\end{equation*}
	  After a change of variable the last sum evaluates to $\chi(-x^2)\eps(\bar{\chi})\sqrt{p}$ and using the identity $\eps(\bar\chi)=\chi(-1)\overline{\eps(\chi)}$ it becomes $\chi^2(x)\overline{\eps(\chi)}\sqrt{p}$. Plugging this $\mathfrak S_{m, n, p}$ becomes, 
	 \begin{equation*}
		\begin{split}
			\mathfrak S_{m, n, p}=\frac{1}{p}\underset{\chi\notin E}{\sumstar_{\chi\md{p}}}\chi(m)\eps(\chi)\eps(\pi\times\bar\chi)\sum_{x\md{p}}\chi^2(x)e\left(-\frac{nx}{p}\right)
		\end{split}
	 \end{equation*}
	 Now by an application of change of variable we get, 
	 \begin{equation}\label{eq: root number wt sum}
		\mathfrak S_{m, n, p}=\frac{1}{\sqrt{p}}\underset{\chi\notin E}{\sumstar_{\chi\md{p}}}\chi(m)\bar{\chi}(n^2)\eps(\chi^2)\eps(\pi\times \bar\chi).
	 \end{equation}
	 
	Next step would be to execute the $\chi\md{p}$-sum, for this first we need to explicate the root number of the twisted form $\eps(\pi\times\bar\chi)$ which we have recorded in the following lemma.

    Now plugging the evaluation~\eqref{eq: principal series root number} in ~\eqref{eq: root number wt sum}, we get,
    \begin{equation*}
        \mathfrak S_{m, n, p}=
            \frac{1}{\sqrt p}\underset{\chi\notin E}{\sumstar_{\chi\md{p}}}\chi(m)\bar{\chi}(n^2)\eps(\chi^2)\eps(\mu_1\bar\chi)\eps(\mu_2\bar\chi) 
    \end{equation*}
    Both of their treatments are similar then we just record the proof principal series case. Let $\rho$ be the quadratic character modulo $p$, and using the Hasse-Davenport identity in the principal series case we rewrite $\mathfrak S_{m, n, p}$  as, 
    \begin{equation*}
        \begin{split}
            \mathfrak S_{m, n, p}&=\frac{\eps_2(\psi)}{\sqrt{p}}\underset{\chi\notin E}{\sumstar_{\chi\md{p}}}\chi(4m)\bar{\chi}(n^2)\eps(\chi)\eps(\chi\rho)\eps(\mu_1\bar\chi)\eps(\mu_2\bar\chi)\\&=\frac{-\eps_2(\psi)}{\sqrt{p}}{\sum_{\chi\md{p}}}\chi(4m)\bar{\chi}(n^2)\eps(\chi)\eps(\chi\rho)\eps(\mu_1\bar\chi)\eps(\mu_2\bar\chi)+O(1/\sqrt{p}).
        \end{split}
    \end{equation*}
    Opening each of these four normalised Gau\ss\ sums and executing the $\chi\md{p}$-sum we have, 
    \begin{equation*}
        \mathfrak S_{m, n, p}=\frac{-\eps_2(\psi)}{p^{3/2}}\underset{\frac{xy}{zw}\equiv -\frac{n^2}{4m}\md{p}}{\sum_{x, y, z, w\in \F_p^\times}}\rho(y)\mu_1(z)\mu_2(w)e\left(\frac{x+y-z-w}{p}\right)+ O(1/\sqrt p).
    \end{equation*}
    And if $\pi_{p}$ is twist-minimal supercuspdial then similarly opening the Gau\ss\ sum and executing the sum over $\chi\md{p}$ we get, 
    \begin{equation*}
        \begin{split}
            \mathfrak S_{m, n, p}
        &=-\frac{1}{\sqrt p}\underset{\chi\notin E}{\sumstar_{\chi\md{p}}}\chi(-m)\bar{\chi}(n^2)\eps(\chi^2)G((\bar{\chi}\circ N)\bar{\eta}_\pi)\\
        &=\frac{-\eps_2(\psi)}{p^{3/2}}\underset{\frac{xy}{N(z)}\equiv-\frac{n^2}{4m}\md{p}}{\sum_{x, y\in\F_p^\times}\sum_{z\in \F_{p^2}^\times}}\rho(y)\bar{\eta}_f(-z)e\left(\frac{x+y-Tr_{\mathbb{F}_{p^2}/\mathbb{F}_p}(z)}{p}\right)+O(1/\sqrt p).
        \end{split}
    \end{equation*}
    This concludes the proof of the Lemma~\ref{thm: evaluation of K(m, n, pq)}.
    \end{proof}

    \[
    H(\lambda):=\frac{-1}{p^{3/2}}\underset{\text{Nr}_{\mathbb{F}_{p^2}/\mathbb{F_p}}(x)=\lambda yz}{\sum_{x\in \mathbb{F}_{p^2}^\times}\sum_{y, z\in \mathbb{F}_p^\times}}\chi(x)\rho(y)e\left(\frac{\text{Tr}_{\mathbb{F}_{p^2}/\mathbb{F}_p}(x)-y-z}{p}\right)
    \]

    \begin{corollary}[Square-root cancellation]\label{cor: sq cancellation}
	We have $$H(\lambda; p)=O(1),$$ which in particular implies $\mathfrak S_{m, n, p}\ll 1$. Here the implied constants are absolute.
    \end{corollary}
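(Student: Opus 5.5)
The plan is to read off the bound from the geometric description of $H(\lambda;p)$ in Lemma~\ref{thm:geo prop}. The sum $H(\lambda;p)$ is, up to the unimodular factor $\eta_\pi(-1)$, the exotic hypergeometric sum~\eqref{eq:exotic hypergeo} with $(m,n)=(2,2)$, normalized by $p^{-3/2}$. By Lemma~\ref{thm:geo prop} it is, up to sign, the Frobenius trace at $\lambda$ of the sheaf $\mathcal H(\boldsymbol\chi;\boldsymbol\rho)$. This sheaf is geometrically irreducible, pure of weight $0$, of rank $\max\{m,n\}=2$, and lisse on $\mathbb G_m-\{1\}$. Deligne's theorem then gives $|H(\lambda;p)|\le 2$ for every $\lambda\in\F_p^\times\setminus\{1\}$, with an absolute constant.

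The first real step is to check the disjointness hypothesis of Lemma~\ref{thm:geo prop}: the tuples must stay disjoint over every finite extension.
\begin{itemize}
\item In case (a), the tuples are $\boldsymbol\chi=(1,\rho)$ and $\boldsymbol\rho=(\bar\mu_1,\bar\mu_2)$ on $\F_p^2\times\F_p^2$. Disjointness means $\mu_i\neq 1$ and $\mu_i\neq\rho$. Both hold because each $\mu_i$ is primitive and non-quadratic.
\item In case (b), after base change to $\F_{p^2}$ the factor $\F_{p^2}$ splits into $\F_{p^2}\times\F_{p^2}$. The character $\eta_\pi\circ N$ becomes the pair $(\eta_\pi,\eta_\pi^{p})$, so we need $\eta_\pi,\eta_\pi^p\notin\{1,\rho\circ N\}$. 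Since $\rho\circ N=\eta^{(p^2-1)/2}$ is the quadratic character of $\F_{p^2}^\times$, this is implied by $\eta_\pi$ being nontrivial and not quadratic. That in turn follows from $\eta_\pi^{p-1}\neq 1$ together with the non-quadraticity of $\eta_\pi^{p\pm1}$.
\end{itemize}
Disjointness over further extensions is automatic, because composing with the norm is injective on character groups.

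The remaining issue is the exceptional point $\lambda=1$, where the sheaf need not be lisse, and $\lambda=0$. The argument $-n^2/4m$ is never $0$ modulo $p$, since $p\nmid m$ and the contribution with $p\mid n$ is handled separately. At $\lambda=1$ the trace function of a middle-extension sheaf is the trace on the inertia invariants, which has dimension at most the rank $2$. Its weights are $\le 0$ by Deligne's Weil II, so the bound $|H|\le 2$ persists.

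Alternatively, one could bound the sum at $\lambda=1$ directly, as a sum over a three-dimensional variety, via a Katz-type estimate. This is the step I expect needs the most care. The sum over the singular fibre must really be the middle-extension trace and not pick up an extra $\sqrt p$ from $H^2_c$ terms. I would settle it by expressing $H$ as a multiplicative convolution of Kloosterman sheaves, as in the proof sketch of Lemma~\ref{thm:geo prop}, and invoking Katz's description of local monodromy at $1$, where the drop is by a tame pseudoreflection~\cite[Theorem 8.4.2]{MR1081536}.

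Finally, inserting $|H|\ll1$ into Lemma~\ref{thm: evaluation of s_{m, n, p}} gives $\mathfrak S_{m,n,p}=\eps_2(\psi)H+O(p^{-1/2})\ll 1$, since $|\eps_2(\psi)|=1$.
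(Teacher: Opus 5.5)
Your proposal is correct and follows the paper's route. In both, $H(\lambda;p)$ is the normalized trace function of the rank-$2$ hypergeometric sheaf from Lemma~\ref{thm:geo prop}; disjointness is checked (in case (b) after base change to $\F_{p^2}$, using the regularity of $\eta_\pi$); and the bound follows from purity of weight $0$ (Deligne). Your extra treatment of the non-lisse point $\lambda=1$, via the middle-extension property and the Weil~II bound (weights $\le 0$) on inertia invariants, is correct and fills in a point the paper's proof leaves implicit.
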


    \begin{proof}
	$H(\lambda; p)$ is given by classical hypergeometric sum in special and principal series case, and the square-root cancellation is a standard fact. Now in the supercuspidal case, it is given by exotic hypergeometric sum defined in~\eqref{eq:exotic hypergeo}. Lemma~\ref{thm:geo prop} ensures the existence of an $\ell$-adic sheaf and after the quadratic field extension of $\F_p$ this is isomorphic to hypergeometric sheaf $\mathcal{H}(!; 1\circ N_{\F_{p^2}/\F_p}, \rho\circ N_{\F_{p^2}/\F_p}; \eta_\pi, \eta_\pi^p)$.  Regularity of $\eta_\pi$ implies that, pair of characters are disjoint which in turn implies that sheaf is geometrically irreducible and of pure of weight-zero. Hence, we are done.
    \end{proof}

	\subsection{Evaluation of $a\md{q}$-sum}
	In this section we evaluate the following sum,
	\begin{equation}\label{eq:a mod q}
		\frac{1}{\sqrt{q}}\sumstar_{a\md{q}}e\left(\frac{\overline{a}m}{q}\right)G(a, -n; q)=:\mathcal{G}_q(m ,n)
	\end{equation}

	\begin{proposition}[Evaluation of $a\md{q}$-sum]\label{prop: a mod q}
		Let $q=2^kr=2^kcq_1^2q_2^2$ where, $2^k||q$, $\mu^2(c)=1$, $q_1|c^\infty$, and $(c, q_2)=1$. The evaluation of $\mathcal{G}_q(m, n)$ is given by the following,
		\begin{equation}\label{eq:defn of G_q(m ,n)}
			\begin{cases}
				\Delta_q(4m-n^2), &\text{$k=0$}.\\
				\sqrt{2}e\left(\frac{m}{2}\right)1_{n\equiv 1\md{2}}\Delta_q(4m-n^2), &\text{$k=1$}.\\
				\frac{1}{2}\Kl_2\left(\frac{4m-n^2}{2^{k}}; 2^2\right)1_{n\equiv 0\md{2}}1_{4m\equiv  n^2\md{2^k}}\Delta_q(4m-n^2)&\\
				\hspace{4cm}+\frac{1}{2^{k}}c\left(\frac{4m-n^2}{2^k}; 2^k\right)\Delta_q(4m-n^2), &\text{$k>1$; $2|k$}.\\
				\frac{1}{2}\Kl_2\left(\frac{4m-n^2}{2^{k-1}}; 2^3\right)1_{n\equiv0\md{2}}1_{4m\equiv n^2\md{2^{k-1}}}\Delta_q(4m-n^2),&\text{$k>1$; $2\nmid k$}.
			\end{cases}
		\end{equation}
		where, 
		\begin{equation}
			\Delta_q(4m-n^2)=q_1q_2^{-1}\eps_c^2\left(\frac{(4m-n^2)/q_1^2}{c}\right)1_{4m\equiv n^2\md{q_1^2}}c(4m-n^2; q_2^2).
		\end{equation}
	\end{proposition}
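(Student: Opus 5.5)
The plan is to open the Gau\ss\ sum and execute the $a$-sum first. Writing $G(a,-n;q)=q^{-1/2}\sum_{x\bmod q}e((ax^2-nx)/q)$, we get
\[
\mathcal G_q(m,n)=\frac1q\sum_{x\bmod q}e\Big(-\frac{nx}{q}\Big)\sumstar_{a\bmod q}e\Big(\frac{\bar a m+ax^2}{q}\Big)=\frac1q\sum_{x\bmod q}e\Big(-\frac{nx}{q}\Big)S(m,x^2;q),
\]
where $S$ is the Kloosterman sum. Both $G$ and the Kloosterman sum are twisted-multiplicative in $q$, so $\mathcal G_q$ factors over coprime parts up to harmless twists: for $q=rs$ with $(r,s)=1$ one gets $\mathcal G_q(m,n)=\mathcal G_r(\bar s^2 m\cdot s,\ldots)\mathcal G_s(\ldots)$, i.e.\ $m,n$ replaced by $\bar s m$, $\bar s n$ type twists. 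Since the final answers depend on $4m-n^2$ only through Legendre symbols and Ramanujan sums, which are insensitive to multiplication by squares, or through $\eps_c^2$-factors that combine via reciprocity, the twists recombine. It therefore suffices to evaluate $\mathcal G_{\ell^j}(m,n)$ for prime powers.

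For odd $\ell^j$, the cleaner route is to undo the sum differently. Substituting $a\to a$ and completing the square in $x$, $ax^2-nx=a(x-\bar{2a}n)^2-\bar{4a}n^2$. This gives $G(a,-n;\ell^j)=e(-\overline{4a}n^2/\ell^j)G(a,0;\ell^j)$, hence
\[
\mathcal G_{\ell^j}(m,n)=\ell^{-j/2}\sumstar_{a}e\Big(\frac{\bar a(4m-n^2)\bar4}{\ell^j}\Big)G(a,0;\ell^j).
\]
Now $G(a,0;\ell^j)$ equals $1$ for even $j$ (times $\sqrt{\ell^j}$ normalization, which is accounted for) and $\eps_{\ell}\,(\tfrac{a}{\ell})$ for odd $j$. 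In the even case the $a$-sum is a Ramanujan sum $c(4m-n^2;\ell^{j})$ with weight $\ell^{-j/2}$, giving the $q_2^{-1}c(\cdot;q_2^2)$ factor. In the odd case it is a Gau\ss\ sum $\sum^\star(\tfrac{a}{\ell})e(a N/\ell^j)$ with $N=\bar4(4m-n^2)$. This vanishes unless $\ell^{j-1}\mid N$, and then equals $\ell^{j-1}\eps_\ell\sqrt\ell(\tfrac{N/\ell^{j-1}}{\ell})$. After normalization this produces $q_1\eps_c^2(\tfrac{(4m-n^2)/q_1^2}{c})1_{q_1^2\mid 4m-n^2}$. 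Collecting the odd part with $r=cq_1^2q_2^2$ yields $\Delta_q$, where $c$ squarefree is exactly the set of primes with odd exponent. The factor $4$ disappears because it is a square unit.

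For the $2$-part I would compute directly, since completing the square fails. For $k=1$, enumerate $a=1$, $x\in\{0,1\}$ to get $\sqrt2 e(m/2)1_{n\text{ odd}}$ up to the combination with the odd part. For $k\ge2$, use $G(a,-n;2^k)=0$ unless $n$ is even; then write $n=2n'$ and complete the square: $ax^2-2n'x=a(x-\bar a n')^2-\bar a n'^2$. The remaining $G(a,0;2^k)$ is explicit, $(1+i^a)$ times a Jacobi symbol $(\tfrac{2^k}{a})$. The $a$-sum then becomes a sum of $e(\bar a(m-n'^2)/2^k)$ weighted by these characters mod $8$. Splitting $a$ by classes mod $4$ or $8$ should give a Ramanujan-type piece $c(\cdot;2^k)$ and a Kloosterman-type piece $\Kl_2$ mod $4$ or $8$ on $(4m-n^2)/2^{k}$ or $/2^{k-1}$, depending on the parity of $k$.

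The main obstacle is this $2$-adic bookkeeping together with getting the cross-twists between the $2$-part and the odd part exactly right. I would verify the stated formula numerically for small $k$ before trusting the general parity split.
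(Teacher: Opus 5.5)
Your odd-part argument is correct and uses the same ingredients as the paper. Completing the square gives $G(a,-n;\cdot)=e(-\overline{4a}n^2/\cdot)\,G(a,0;\cdot)$, so after $a\mapsto\overline{4a}$ the $a$-sum becomes a Ramanujan sum at the primes with even exponent. At the primes with odd exponent it becomes a quadratic Gau\ss\ sum, which produces $\eps_\ell^2$, the factor $\ell^{(j-1)/2}$, and the condition $\ell^{j-1}\mid 4m-n^2$.

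The paper does this in one step for $r=cq_1^2q_2^2$. It uses the Jacobi-symbol evaluation of $G(a,b;r)$ and a CRT split into the moduli $cq_1^2$ and $q_2^2$, with $a=b+tc$, $t \bmod q_1^2$, on the first. You instead work prime by prime. That is fine, and it makes $\prod_{\ell\mid c}\eps_\ell^2=\eps_c^2$ transparent.

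You must, however, state the CRT twist correctly:
$\mathcal G_{rs}(m,n)=\mathcal G_r(\bar s^2m,\bar sn)\,\mathcal G_s(\bar r^2m,\bar rn)$.
It is the pair $(\bar s^2m,\bar sn)$, not ``$\bar sm,\bar sn$'', that multiplies $4m-n^2$ by the square $\bar s^2$. That is what makes the twists harmless; on the $2$-part this uses $\bar r^2\equiv 1\bmod 8$. The Kloosterman-sum rewriting you start with is never used.

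The genuine gap is the $2$-part. You only describe it, and the one value you assert is not what the computation gives. For $k=1$ you have $G(1,-n;2)=\sqrt2\,1_{2\nmid n}$, hence
$\mathcal G_2(m,n)=2^{-1/2}e(m/2)\,G(1,-n;2)=e(m/2)\,1_{2\nmid n}$.
No factor $\sqrt2$ can appear, since trivially $|\mathcal G_2|\le 1$.

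For $k\ge 2$, use the $\sqrt q$-normalized values $G(a,0;2^k)=1+i^a$ for $k$ even and $\sqrt2\,e(a/8)$ for $k$ odd. Your splitting of $a$ modulo $4$ or $8$ (the paper's $a=2^3b+c$) then gives the following, where $S(x,1;c)=\sum_{u\in(\Z/c\Z)^\times}e\bigl((ux+\bar u)/c\bigr)$:
\[
\mathcal G_{2^k}(m,n)=2^{-k/2}\,1_{2\mid n}\Bigl(c\bigl(\tfrac{4m-n^2}{4};2^k\bigr)+2^{k-2}\,1_{2^k\mid 4m-n^2}\,S\bigl(\tfrac{4m-n^2}{2^k},1;4\bigr)\Bigr)\qquad (k\ \text{even}),
\]
\[
\mathcal G_{2^k}(m,n)=2^{(k-5)/2}\,1_{2\mid n}\,1_{2^{k-1}\mid 4m-n^2}\,S\bigl(\tfrac{4m-n^2}{2^{k-1}},1;8\bigr)\qquad (k\ge 3\ \text{odd}).
\]
In all cases $\mathcal G_q(m,n)=\mathcal G_{2^k}(m,n)\,\Delta_q(4m-n^2)$.

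This has the shape of the proposition but not its constants. For example, $\mathcal G_4(0,0)=\tfrac12\bigl((1+i)+(1-i)\bigr)=1$, whereas the stated formula gives $\tfrac14c(0;4)=\tfrac12$. In the even case the Ramanujan sum should also be evaluated at $(4m-n^2)/4$ and carry $1_{2\mid n}$. The powers of $2$ are off because the paper's $2$-adic Gau\ss-sum lemma is normalized by $1/q$ rather than $1/\sqrt q$ for $k\ge 2$, and at $k=1$ the outer factor $2^{-1/2}$ is lost.

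So the numerical check you propose would already fail against the statement as written at $q=2$ and $q=4$. What your method actually proves is the corrected formula above. It has the same structure as $\Delta_q$: each power of $2$ is paired with a congruence on $4m-n^2$, just as $q_1$ is paired with $q_1^2\mid 4m-n^2$. It can therefore be fed into the later sections in the same way.
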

	
	\begin{proof}
		Firstly for the evaluation we need the following lemma concerning the evaluation of the quadratic Gau\ss\, sum.
		\begin{lemma*}[Evaluation of the quadratic Gau\ss\, sum] If $q$ is odd,
			\begin{equation}\label{eq:c odd}
				G(a, b; q)= \eps_c \left(\frac{a}{q}\right)e\left(-\frac{\overline{4a}b^2}{q}\right) 
			\end{equation}
			where 
			\begin{equation*}
				\eps_c = \begin{cases}
					1, &\text{$c\equiv 1\md{4}$}.\\
					i, &\text{$c\equiv 3\md{4}$}.
				\end{cases}
			\end{equation*}
			If $q=2^k$ then,
			\begin{equation}\label{eq:c even}
				G(a, b; q)=
				\begin{cases}
					\sqrt{2}\,1_{b\equiv 1\md{2}}, &\text{$k=1$.}\\
					\frac{1}{2^{k/2}}1_{b\equiv 0\md{2}}e\left(\frac{\bar{a}b^2/4}{q}\right)\left(1+e\left(\frac{a}{2^2}\right)\right), &\text{$k>1$ and $k\equiv 0\md{2}$}.\\
					\frac{1}{2^{(k-1)/2}}1_{b\equiv 0\md{2}}e\left(\frac{\bar{a}b^2/4}{q}\right)e\left(\frac{a}{2^3}\right), &\text{$k>1$ and $k\equiv 1\md{2}$.}
				\end{cases}
			\end{equation}
		\end{lemma*}
		It can easily be derived from~\cite[Lemma 5.4.5]{MR1420620}, \cite[Theorem 1.3.4]{MR1625181}, and~\cite[Lemma 5.1]{MR4624955}.\\
		
		Now we turn to proof of the proposition. We give a proof just for the first and last case. Remaining cases will follow in the similar fashion. First we treat the case where $k=0$. By the equation~\eqref{eq:c odd}, and a small change of variable~\eqref{eq:a mod q} becomes,
		\begin{equation*}
			\frac{\eps_q}{\sqrt{q}}\sumstar_{a\md{cq_1^2q_2^2}}\left(\frac{a}{c}\right)e\left(\frac{a(4m-n^2)}{cq_1^2q_2^2}\right)
		\end{equation*}
		and by the Chinese remainder theorem we rewrite this as, 
		\begin{equation*}
			\begin{split}
				\frac{\eps_q}{\sqrt{q}}&\sum_{a\md{cq_1^2}}\left(\frac{a}{c}\right)e\left(\frac{a(4m-n^2)}{cq_1^2}\right)\sumstar_{a\md{q_2^2}}e\left(\frac{a(4m-n^2)}{q_2^2}\right)\\&
				=c_{q_2^2}(4m-n^2)\frac{\eps_q}{\sqrt{q}}\sum_{a\md{cq_1^2}}\left(\frac{a}{c}\right)e\left(\frac{a(4m-n^2)}{cq_1^2}\right)
			\end{split}
		\end{equation*}
		To evaluate the $a\md{cq_1^2}$-sum we write $a=b+rc$ where $b\in \Z/c\Z$ and $r\in \Z/q_1^2\Z$, so the inner sums becomes,
		\begin{equation*}
			\begin{split}
				&\sum_{b\md{c}}\left(\frac{b}{c}\right)e\left(\frac{b(4m-n^2)}{cq_1^2}\right)\sum_{r\md{q_1^2}}e\left(\frac{r(4m-n^2)}{q_1^2}\right)\\&
				=q_1^2\sum_{b\md{c}}\left(\frac{b}{c}\right)e\left(\frac{b(4m-n^2)/q_1^2}{c}\right)1_{4m-n^2\equiv 0\md{q_1^2}}\\&
				=\eps_cq_1^2\sqrt{c}\left(\frac{(4m-n^2)/q_1^2}{c}\right)1_{4m-n^2\equiv 0 \md{q_1^2}}.
			\end{split}
		\end{equation*}
		Hence combining these, the evaluation in the odd case follows.
		
		Now we turn to the case $k\equiv 1\md{2}$. By Chinese remainder theorem, and couple of change of variable we can rewrite the equation~\eqref{eq:a mod q} as,
		\begin{equation*}
			\frac{1}{\sqrt{r}}\sumstar_{a\md{r}}e\left(\frac{\bar{a}m}{r}\right)G(a, -n; r)\times\frac{1}{\sqrt{2^k}}\sumstar_{a\md{2^k}}e\left(\frac{\bar{a}m}{2^k}\right)G(a, -n; 2^k)
		\end{equation*}
		Here the $a\md{r}$-sum can be evaluated as before, so we just need to evaluate the remaining one. 
		\begin{equation*}
			\begin{split}
				\frac{1}{\sqrt{2^k}}\sumstar_{a\md{2^k}}\cdots=\frac{1}{2^{k-1/2}}1_{n\equiv 0\md{2}}\sumstar_{a\md{2^k}}e\left(\frac{a(m-\frac{n^2}{4})}{2^k}\right)e\left(\frac{\bar{a}}{2^3}\right)
			\end{split}
		\end{equation*}
		Write $a=2^3b+c$, with $b\in\Z/2^{k-3}\Z$, and $c\in\left(\Z/2^3\Z\right)^\times$. Hence the second sum becomes, 
		\begin{equation*}
			\begin{split}
				\sumstar_{a\md{2^k}}\cdots&=\sumstar_{b\md{2^{k-3}}}e\left(\frac{b(m-\frac{n^2}{4})}{2^{k-3}}\right)\sumstar_{c\md{2^3}}e_{2^3}\left(c\left(\frac{4m-n^2}{2^{k-1}}+\bar{c}\right)\right)\\&
				=2^{k-\frac{3}{2}}1_{4m\equiv n^2\md{2^{k-1}}}\Kl_2\left(\frac{4m-n^2}{2^{k-1}}; 2^3\right)
			\end{split}
		\end{equation*}
		Hence the result follows.
	\end{proof}

	\section{Reduction of integral transform}
	In this section we are goint to simplify the integral transform $\mathcal{I}(m, n, q)$ that has appeared in our equation~\eqref{eq: Main}. Recall,
	\begin{equation*}
		\begin{split}
			\mathcal{I}(m, n, q)=\int_\R& W\left(\frac{x}{\mathfrak X}\right)g(q, x)\int_{0}^{\infty}U(y)U_{m, q}(y)\int_\R V(z)\\
			&\times e\left(\frac{xX^2}{pqQ}(y-z^2)\pm\frac{2X\sqrt{my}}{pq}+\frac{nzX^2}{pq}\right)\, dx\, dy\, dz.
		\end{split}
	\end{equation*}
	We record the simplification as a form of a lemma,
	\begin{lemma}
		Let $F$ be a smooth function supported on $[0, 2A]$ for some large $A$ and $F\equiv 1$ on $[0, A]$, then we have
		\begin{equation}\label{eq: simp Integral}
			\mathcal I(m, n, q)= \sqrt{|\mathfrak X|}\frac{q^2}{C^2}\frac{C}{Q}\frac{p^{1/4}}{m^{1/4}} U\left(\frac{m}{M}\right)V\left(\frac{|n|}{N}\right)F\left(\frac{|4m-n^2|}{p^{1+\eps}C/Q}\right) \tilde I_q (m, n) + O(p^{-2026})
		\end{equation}
		where, 
		\begin{equation*}
			\tilde I_q(m, n):= \int_\R \frac{W(x)}{\sqrt{x}}g\left(q, \frac{xC\mathfrak X}{q}\right)e\left(\pm \frac{4m-n^2}{4xpC|\mathfrak X|/Q}\right)\, dx.
		\end{equation*}
	\end{lemma}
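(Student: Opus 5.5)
We will analyse the triple integral by first integrating over $z$ and $y$ by stationary phase and then reading off the support conditions. Throughout we write $C$ for the dyadic size of $q$ and $\mathfrak X$ for the dyadic size of $x$ (with $|\mathfrak X|\ll p^{\eps}$ and, by \eqref{eq:g(q, x)-2}, effectively $|\mathfrak X|\gg p^{-\eps}C/Q$). We also set $M=p^{1+\eps}$ and $N=p^{1/2+\eps}$ as the dyadic sizes of $m,n$, which produces the factors $U(m/M)V(|n|/N)$ after a dyadic partition.

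\textbf{$z$-integral.} The phase is $-\frac{xX^2}{pqQ}z^2+\frac{nzX}{pq}\cdot X$. Since $X^2/(pQ)=X/\sqrt p\cdot$ (up to constants) $\asymp Q$, the coefficient of $z^2$ has size $|\mathfrak X|Q/q\gg 1$ in the relevant range. We apply stationary phase (second-derivative test with full asymptotic expansion). The stationary point is $z_0=nQ/(2x)$ up to normalisation, and it produces a factor $\sqrt{q/(|\mathfrak X| Q)}$ times $e\bigl(\frac{n^2X^2}{4xpqQ}\bigr)$ times a smooth weight in $z_0$. When $|n|$ is not of size $\asymp |\mathfrak X|\cdot(\text{scale})$, repeated integration by parts (the integration by parts lemma) shows the integral is negligible, which is where the $V(|n|/N)$ localisation enters.

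\textbf{$y$-integral.} Similarly, the phase $\frac{xX^2}{pqQ}y\pm\frac{2X\sqrt{my}}{pq}$ has second derivative of size $X\sqrt m/(pq)\asymp Q/q$ when $m\asymp p$. Stationary phase at $\sqrt{y_0}=\mp\sqrt m\,Q/(xX)$ gives a factor $\sqrt{q/Q}$ and the phase $e\bigl(\mp\frac{mX^2}{xpqQ}\cdot\text{const}\bigr)$. The factor $p^{1/4}/m^{1/4}$ is the amplitude coming from $U_{m,q}$, since $\sqrt{Xm^{1/2}/(pq)}$ is normalised against the $1/\sqrt{\cdot}$ decay of $U_{\pm2it}$.

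\textbf{Combining and the $x$-integral.} Multiplying the two stationary-phase contributions, the phases combine exactly into $e\bigl(\pm\frac{4m-n^2}{4xpq/Q}\bigr)$. This combination requires the specific identity $X^2=pQ^2$, and checking it is the main bookkeeping point. The amplitudes combine into $\frac{q}{Q|\mathfrak X|}$, i.e.\ the prefactor $\frac{q^2}{C^2}\frac{C}{Q}$ after rescaling $x\mapsto xC\mathfrak X/q$, together with $\sqrt{|\mathfrak X|}/\sqrt x$ from the Jacobian. This leaves exactly $\tilde I_q(m,n)$.

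Finally, if $|4m-n^2|\gg p^{1+\eps}C/Q$, the $x$-phase has derivative $\gg p^{\eps}$ relative to the weight's oscillation. By \eqref{eq:g(q, x)-2} the $g$ factor is harmless, so integration by parts in $x$ makes $\tilde I_q$ negligible. Inserting $F$ therefore costs $O(p^{-2026})$. Secondary terms of the stationary-phase expansions are absorbed into the smooth weights.

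\textbf{Main obstacle.} The hardest step is the uniformity of stationary phase when $|\mathfrak X|$ is small, since then the second derivative $|\mathfrak X|Q/q$ may be close to $1$. We will handle this by splitting the $x$-range. In the range $|\mathfrak X|\ll p^{-\eps}q/Q$, we bound the integral trivially using $g(q,x)\ll Q/q$ and show that this range is compatible with (or absorbed into) the stated form. In the complementary range we use the stationary-phase lemma with error terms that are powers of $(|\mathfrak X|Q/q)^{-1}$.
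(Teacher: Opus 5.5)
Your overall route is the paper's: stationary phase in $y$ and $z$, add the two stationary values to get a phase in $4m-n^2$, rescale $x\mapsto xC\mathfrak X/q$, and integrate by parts in $x$ (using \eqref{eq:g(q, x)-2}) to insert $F$. The paper differs only in that it rescales $x,y,z$ first and separates $q$ from the weights by Mellin inversion.

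However, your stationary-phase computation does not produce the statement.

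The linear term of the $z$-phase is $\frac{nzX}{pq}$, not $\frac{nzX^2}{pq}$. The $X^2$ in the displayed definition of $\mathcal I$ is a misprint; compare the Poisson output and the rescaled phase $\frac{nzX}{pC}$. With your phase, $z_0=nQ/(2x)$ lies in the support of $V$ only if $|n|\asymp|\mathfrak X|/Q<1$, so every $n\neq 0$ would be negligible.

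The correct stationary points are $z_0=n/(2x\sqrt p)$ and $y_0=m/(px^2)$, with values $\frac{n^2Q}{4xpq}$ and $-\frac{mQ}{xpq}$. Your values as written, $\frac{n^2X^2}{4xpqQ}=\frac{n^2Q}{4xq}$ and $\frac{mX^2}{xpqQ}=\frac{mQ}{xq}$, are too large by a factor $p$ and do not add up to the phase you then claim.

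The stationary points also force $m\asymp p\mathfrak X^2$ and $|n|\asymp\sqrt p|\mathfrak X|$; that is what $M$ and $N$ are. So $U(m/M)V(|n|/N)$ is the localisation produced by the stationary phase lemma, not a dyadic partition at the top scales $p^{1+\eps}$, $p^{1/2+\eps}$. With your choice, $m$ is localised at the wrong scale as soon as $|\mathfrak X|$ is small.

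Likewise, $(p/m)^{1/4}$ does not come from $U_{m,q}$, which is a bounded flat weight because $\sqrt w\,U_{\pm 2it}(w)$ is. It is $|\phi_y''(y_0)|^{-1/2}\asymp\sqrt{q/Q}\,(p/m)^{1/4}$, since $\phi_y''\asymp Q\sqrt m/(\sqrt p\,q)$. Your ``$\phi_y''\asymp Q/q$'' plus an amplitude from $U_{m,q}$ gives the right factor only through two compensating errors.

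The uniformity problem you flag is genuine, and the paper's proof does not address it: the stationary phase lemma gives an $O(p^{-2026})$ error only when $Y=|\mathfrak X|Q/q\geq p^{\delta}$. Your treatment of it fails, for three reasons.
\begin{enumerate}[(i)]
\item Splitting at $|\mathfrak X|\asymp p^{-\eps}q/Q$ leaves the window $p^{-\eps}q/Q\ll|\mathfrak X|\ll p^{\delta}q/Q$. There $Y$ may be $\leq 1$, and an expansion in powers of $Y^{-1}$ gives no error term at all.
\item A trivial bound on the remaining range cannot be absorbed into an identity whose error is $O(p^{-2026})$.
\item \eqref{eq:g(q, x)-2} only gives $g\ll Q/q$ for $|x|\leq q/Q$. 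So the range $|x|\leq p^{-\eps}C/Q$ is smaller by a factor $p^{-\eps}$, not negligible.
\end{enumerate}
For $C\asymp Q$ one has $Y\asymp|\mathfrak X|$, while $g$ is essentially supported on $|x|\ll p^{\eps}$. So stationary phase says nothing for $|\mathfrak X|\ll 1$, and the displayed asymptotic is unavailable in the generic range.

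A workable fix is to split at $Y=p^{\delta}$:
\begin{itemize}
\item For $Y\geq p^{\delta}$, the corrected stationary-phase computation gives the displayed formula, with the lower-order terms of the expansion absorbed into the weights.
\item For $Y<p^{\delta}$, the $y$- and $z$-phases are non-oscillatory. The integrals are negligible unless $m\ll p^{1+\eps}\max(C^2/Q^2,\mathfrak X^2)$ and $|n|\ll p^{1/2+\eps}\max(C/Q,|\mathfrak X|)$, hence unless $m,n^2\ll p^{1+\eps}C/Q$, so the $F$-cutoff is automatic. Otherwise they are bounded smooth functions of $m$ and $n$, which must be carried as weights rather than evaluated.
\end{itemize}
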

	
	\begin{proof}
            Our first step in this analysis is the following change of variables which would make the phase function free from the variable $q$,
		\begin{equation*}
			x\leadsto C\mathfrak X x/q,\,\,\, y\leadsto q^2y/C^2,\,\,\,\text{and}\,\,\, z\leadsto qz/C.
		\end{equation*} 
		It transforms our integral transform $\mathcal{I}(m, n; q)$ into 
		\begin{equation*}
			\begin{split}
				 \frac{q^2\mathfrak X}{C^2}\int_\R W\left(\frac{Cx}{q}\right)&g\left(q, \frac{Cx\mathfrak X}{q}\right)\int_{0}^{\infty}U\left(\frac{q^2y}{C^2}\right)U_{m, C}\left(y\right)\int_\R V\left(\frac{qx}{C}\right)\\
				&\times e\left(\frac{x\mathfrak X Q}{C}(y-z^2)\pm\frac{2X\sqrt{my}}{pC}+\frac{nzX}{pC}\right)\, dx\, dy\, dz.
			\end{split}    
		\end{equation*}
         Note that now the variable $q$ has appeared in our smooth weight functions $U, V$ and $W$, but we can easily separate $q$ from them at a cost of a negligible error, just by a simple application of Mellin inversion . To keep our expression clean, we drop the $s$-integrals from our analysis,
		\begin{equation}\label{eq: int transform}
			\begin{split}
				\frac{q^2\mathfrak X}{C^2}\int_\R W\left({x}\right)g\left(q, \frac{Cx\mathfrak X}{q}\right)&\int_{0}^{\infty}U(y)e\left(\frac{xy\mathfrak X Q}{C}\pm \frac{2X\sqrt{my}}{pC}\right)\, dy\\
				&\times\int_\R V(z)e\left(\frac{nzX}{pC}-\frac{xz^2\mathfrak X Q}{C}\right)\, dz\, dx,
			\end{split}
		\end{equation}
		with new weight functions $W,\, U$, and $V$. For our further analysis we need the following result on the stationary phase,
		
		\begin{lemma}(Method of stationary phase)
			Let $w(x)$ be a nice function supported on $x\asymp X$, and suppose for all $j\geq 1$ the phase function $\phi(x)$ satisfies, 
			\begin{equation*}
				X^j \partial_x^j \phi(x)\ll Y.
			\end{equation*}
			Now suppose for some $x_0\asymp X$, known as stationary point, we have $\phi^\prime(x_0)=0$ and $X^2\partial_x^2\phi(x)\gg Y$. If $Y\gg R\geq 1$ for some $R$, then for any large $A>0$ we have
			\begin{equation}\label{eq: stationary phase}
				\int_\R w(x)e\left(\phi(x)\right)\, dx = \frac{X}{\sqrt{Y}}e\left(\phi(x_0)+\frac{1}{8}\right)w(x_0)+O_A\left(\frac{X}{R^A}\right).
			\end{equation}
		\end{lemma}

		\begin{proof}
			See~\cite[Proposition 8.2]{MR3127809} and~\cite[Lemma 4.3]{MR4624955}.
		\end{proof}

		Now we apply this lemma on the $y$-integral first. Here phase function admits a stationary point iff $m\asymp p\mathfrak X^2 =:M$ otherwise it is negligibly small by repeated integration by parts and up to an negligible error the $y$-integral equals

		\begin{equation*}
			\int_{0}^{\infty}U(y)e\left(\frac{xy\mathfrak X Q}{C}\pm \frac{2X\sqrt{my}}{pC}\right)\, dy=\frac{p^{1/4}}{m^{1/4}}\sqrt{\frac{C}{Q}}e\left(\mp \frac{mQ}{xpC|\mathfrak X|}\right)U\left(\frac{m}{M}\right)+O(p^{-2026}).
		\end{equation*}

		Let $N:=\sqrt{p}|\mathfrak X|$, then similarly for the $z$ integral we have,

		\begin{equation*}
			\int_\R V(z)e\left(\frac{nzX}{pC}-\frac{xz^2\mathfrak X Q}{C}\right)\, dz=\sqrt{\frac{C}{xQ|\mathfrak X|}}e\left(\pm \frac{n^2Q}{4pC|\mathfrak X|}\right)V\left(\frac{|n|}{N}\right)+O(p^{-2026}).
		\end{equation*}

		Now plugging them in~\eqref{eq: int transform} we have,

		\begin{equation*}
			\begin{split}
				\sqrt{|\mathfrak X|}\frac{q^2}{C^2}\frac{C}{Q}\frac{p^{1/4}}{m^{1/4}} U\left(\frac{m}{M}\right)V\left(\frac{|n|}{N}\right)\int_\R \frac{W(x)}{\sqrt{x}}g\left(q, \frac{xC\mathfrak X}{q}\right)&e\left(\pm \frac{(4m-n^2)}{4xpC|\mathfrak X|/Q}\right)\, dx\\& \hspace{1.4cm}+O(p^{-2026}).
			\end{split}
		\end{equation*}
		This would be enough if $C$ is large, $C\gg Q^{1-\eps}$. But we need to improve this when $C\ll Q^{1-\eps}$. In this case if $|\mathfrak X|\ll p^{-\eps}$ then we could replace $g(q, xC\mathfrak X/Q)$ by $1$ and otherwise, i.e. $p^{-\eps}\ll |\mathfrak X|\ll p^\eps$, it behaves like a smooth function, therefore we can take it with our weight function. So, in any case the $x$-integral becomes,

        \begin{equation*}
            \int_{x\asymp 1} e\left(\pm \frac{(4m-n^2)}{4xpC|\mathfrak X|/Q}\right)\, dx
        \end{equation*}
        And the integration by parts bound yields that it is negligibly small unless we have $|4m-n^2|\ll p^{1+\eps}C/Q$. Observe that this inequality is still valid when $C\gg Q^{1-\eps}$ (note that our $\eps$ are different thanks to our epsilon convention). 
        Let $F$ be a nice function supported on $[0, 2A]$ and $F\equiv 1$ on $[0, A]$ for some large $A$. Then we can replace the above restriction in terms of $F$, so we can write the $x$-integral as 
        \begin{equation*}
            F\left(\frac{|4m-n^2|}{p^{1+\eps}C/Q}\right)\int_\R \frac{W(x)}{\sqrt{x}}g\left(q, \frac{xC\mathfrak X}{q}\right)e\left(\pm \frac{(4m-n^2)}{4xpC|\mathfrak X|/Q}\right)\, dx +O(p^{-2026}).
        \end{equation*}

        Now plugging this expression of $x$-integral in the previous expression of integral transform we have, 

        \begin{equation*}
            \begin{split}
                \sqrt{|\mathfrak X|}\frac{q^2}{C^2}\frac{C}{Q}\frac{p^{1/4}}{m^{1/4}} U\left(\frac{m}{M}\right)V\left(\frac{|n|}{N}\right)&F\left(\frac{|4m-n^2|}{p^{1+\eps}C/Q}\right)\int_\R \frac{W(x)}{\sqrt{x}}g\left(q, \frac{xC\mathfrak X}{q}\right)\\&\hspace{1.4cm}\times e\left(\pm \frac{(4m-n^2)}{4xpC|\mathfrak X|/Q}\right)\, dx+O(p^{-2026}).
            \end{split}
        \end{equation*}
	   This completes the proof.
	\end{proof}

	\section{Simplifications}
	
	In this section we simplify our main equation $\M(C, X)$ given by the equation~\eqref{eq: Main}. In order to simplify the expression we plug~\eqref{eq: simp Integral} into~\eqref{eq: Main}. Up to a negligible error we have, 

    \begin{equation*}
        \begin{split}
            \M(C, X)=\sqrt{|\mathfrak X|}\frac{pC\sqrt{Q}}{\phi(p)}\sum_{q\sim C}\frac{\chi_\pi(q)}{q^{3/2}}\sum_{m\ll p^{1+\eps}}\sum_{n\ll p^{1/2+\eps}}\frac{\lambda_{\bar\pi}(m)}{\sqrt{m}}K(m, n; pq)\tilde{I}_q(m , n)&\\&\hspace{-5.5cm}\times U\left(\frac{m}{M}\right)V\left(\frac{|n|}{N}\right)H\left(\frac{|4m-n^2|}{p^{1+\eps}C/Q}\right)
        \end{split}
    \end{equation*}
	
	Opening the integral $\tilde I_q(m , n)$ we get,

    \begin{equation*}
        \begin{split}
            \M(C, X)=\sqrt{|\mathfrak X|}\frac{p\sqrt{Q}}{\phi(p)\sqrt C}\int_\R\frac{W(x)}{\sqrt x}\sum_{q\sim C}\chi_{\pi}(q)\,g\left(q, \frac{xC|\mathfrak X|}{q}\right)&\\&\hspace{-5cm}\times\sum_{m\sim M}\sum_{n\sim N}\frac{\lambda_{\bar\pi}(m)}{\sqrt{m}}K(m, n; pq)F\left(\frac{4m-n^2}{p^{1+\eps}C/Q}\right)\, dx
        \end{split}
    \end{equation*}
    Note that we have suppressed the weight functions of $m$, and $n$ under the asymptotic notation; also note that $F\left(\frac{4m-n^2}{p^{1+\eps}C/Q}\right)$ is a new weight function which is the product $e\left(\pm\frac{4m-n^2}{p^C|\mathfrak X|/Q}\right)H\left(\frac{|4m-n^2|}{p^{1+\eps}C/Q}\right)$. We can consider the function $g(q, xC|\mathfrak X|/q)$ as a weight function of $q$, so it can also be suppressed under the notation $q\sim C$. Finally by triangle inequality we have, 

    \begin{equation}
        \begin{split}
            \M(C, X)\ll_\eps\int_{x\asymp 1}\left|\sqrt{\frac{Q}{C}}\sum_{q\sim C}\chi_\pi(q)\sum_{m\sim M}\sum_{n\sim N}\frac{\lambda_{\bar\pi}(m)}{\sqrt{m}}K(m, n; pq)F\left(\frac{|4m-n^2|}{p^{1+\eps}C/Q}\right)\right|.
        \end{split}
    \end{equation}
	
	Therefore it suffices to bound, 
	\begin{equation}
		\M_0(C,\,X):=\sqrt{\frac{Q}{C}}\sum_{q\sim C}\chi_\pi(q)\sum_{m\sim M}\sum_{n\sim N}\frac{\lambda_{\bar\pi}(m)}{\sqrt{m}}K(m, n; pq)F\left(\frac{|4m-n^2|}{p^{1+\eps}C/Q}\right).
	\end{equation}
	Evaluation of the character sum $K(m, n, pq)$ depends on the exponent of $2$ appears in the unique prime factorization of $q$ and therefore depending on the exponent, we now subdivide $\M_u(C,\,X)$ in four sub-sums. Precisely, 
	\begin{equation*}
		\M_0(C, X)=\M^{\text{odd}}(C,X)+\M^{\text{even}}(C,X),
	\end{equation*}
	where,
	\begin{equation}
		\M^{\text{odd}}(C, X):=\sqrt{\frac{Q}{C}}\underset{(q, 2)=1}{\sum_{q\sim C}}\chi_\pi(q)\sum_{m\sim M}\sum_{n\sim N}\frac{\lambda_{\bar\pi}(m)}{\sqrt{m}}K(m, n; pq)F\left(\frac{|4m-n^2|}{p^{1+\eps}C/Q}\right),
	\end{equation}
	and
	\begin{equation}
		\M^{\text{even}}(C, X):=\sqrt{\frac{Q}{C}}\left(\sum_{2||q}\cdots+\underset{k\equiv 0\md{2}}{\sum_{4|q}}\cdots+\underset{k\equiv 1\md{2}}{\sum_{4|q}}\cdots\right).
	\end{equation}
	Treatment of the each sub-sums of $\M^{\text{even}}(C,\,X)$ will be similar to the treatment of $\M^{\text{odd}}(C,\,X)$. For a clear exposition we only record the treatment for the odd case. In this case plugging the evaluation of character sum we get, 
	\begin{equation*}
		\begin{split}
			\M^{\text{odd}}(C, X)=\sqrt{\frac{Q}{C}}\sum_{cq_1^2q_2^2\sim C}\eps_c^2\chi_\pi(cq_1^2q_2^2)q_1q_2^{-1}\underset{4m-n^2\equiv 0\md{q_1^2}}{\sum_{m\sim M}\sum_{n\sim N}}\frac{\lambda_{\bar\pi}(m)}{\sqrt{m}}&\\&\hspace{-9cm}\times \left(\frac{(4m-n^2)/q_1^2}{c}\right)H(-n^2/4m; p)c_{q_2^2}(4m-n^2)F\left(\frac{|4m-n^2|}{p^{1+\eps}C/Q}\right)
		\end{split}
	\end{equation*}
	Expanding the Ramanujan sum $c_{q_2^2}(4m-n^2)$ as following
	$$
	c_{q_2^2}(4m-n^2)=\sum_{d|(4m-n^2, q_2^2)}d\mu(q_2^2/d)=\underset{q_2^2=dl}{\sum_{4m-n^2\equiv 0\md{d}}}d\mu(l),
	$$
	we have
	\begin{equation*}
		\begin{split}
			\M^\text{odd}(C, X)=\sqrt{\frac{Q}{C}}\underset{dl=\square}{\sum_{cq_1^2dl\sim C}}\eps_c^2\chi_\pi(cq_1^2dl)\mu(l)\sqrt{\frac{q_1^2d}{l}}\underset{4m-n^2\equiv 0\md{q_1^2d}}{\sum_{m\sim M}\sum_{n\sim N}}\frac{\lambda_{\bar\pi}(m)}{\sqrt{m}}&\\&\hspace{-7cm}\times \left(\frac{(4m-n^2)/q_1^2}{c}\right)H(-n^2/4m; p)F\left(\frac{|4m-n^2|}{p^{1+\eps}C/Q}\right).
		\end{split}
	\end{equation*}
	Now because of the congruence condition $4m-n^2\equiv 0\md{q_1^2d}$ we determine the fraction $\frac{4m-n^2}{q_1^2d}$ by $r\in\Z$, and define a sequence,
	\begin{equation}\label{eq:alpha}
		\alpha_\pi(v):=\underset{4m-n^2=v}{\sum_{m\sim M}\sum_{n\sim N}}\frac{\lambda_{\bar\pi}(m)}{\sqrt{m}}H(-n^2/4m; p).
	\end{equation}
    
    For technical simplifications we decompose $\alpha_\pi(\cdot)$ as,
    \begin{equation*}
        \alpha_\pi(v)=\alpha_\pi^\prime(v)+\alpha_\pi^{\prime\prime}(v)
    \end{equation*}
    where $\alpha_\pi^\prime(v)$(resp. $\alpha_\pi^{\prime\prime}(v)$) restricts $n$-sum to odd (resp. even). We rewrite $\M^\text{odd}(C, X)$ as,
    \begin{equation}\label{eq:decomp. of M(C, X)}
    \M^\text{odd}(C, X)=\sqrt{\frac{Q}{C}}\left(\M^\text{odd}(C, X; \alpha_\pi^\prime)+\M^\text{odd}(C, X; \alpha_\pi^{\prime\prime})\right).
    \end{equation}
    where,
    \begin{equation*}
		\begin{split}
			\M^\text{odd}(C, X; \alpha^\prime_\pi)=\underset{dl=\square}{\sum_{cq_1^2dl\sim C}}\eps_c^2\chi_\pi(cq_1^2dl)\mu(l)\sqrt{\frac{q_1^2d}{l}}\left(\frac{d}{c}\right)\sum_{r\ll p^{1+\eps}C/q_1^2dQ}\left(\frac{r}{c}\right)\alpha_\pi^\prime(rq_1^2d), 
		\end{split}
	\end{equation*}
    
    and 
    \begin{equation*}
		\begin{split}
			\M^\text{odd}(C, X; \alpha^\prime_\pi)=\underset{dl=\square}{\sum_{cq_1^2dl\sim C}}\eps_c^2\chi_\pi(cq_1^2dl)\mu(l)\sqrt{\frac{q_1^2d}{l}}\left(\frac{d}{c}\right)\sum_{r\ll p^{1+\eps}C/q_1^2dQ}\left(\frac{r}{c}\right)\alpha_\pi^{\prime\prime}(rq_1^2d). 
		\end{split}
	\end{equation*}

    Both of the above terms can be treated in similar fashion, so for a clear exposition we present the analysis only for $\M^\text{odd}(C, X; \alpha_\pi^\prime)$.\\
	
	Observe that, the presence of $\mu(l)$ in the above equation forces us to conclude that $l$ is square-free and we already have $dl=\square$, so we can further decompose $d$ as $d=ld_{\square}^2$. Now we further write $r$ as product of its square-free and square-full part i.e., $r=st^2$ and by smooth dyadic partitions we localize sizes of the variables, $s\sim S$ and $t\sim T$ such that 
	\begin{equation}\label{eq: st^2 bound}
	ST^2\ll \frac{p^{1+\eps}C}{q_1^2d_\square^2 lQ}.  
	\end{equation}
	After rearranging the sums we finally rewrite $\M^\text{odd}(X)$ as,
	
	\begin{equation}\label{eq:final expression}
		\begin{split}
			\M^\text{odd}(C, X; \alpha_\pi^\prime)=\sum_{q_1ld_\square\ll \sqrt{C}}&\chi_\pi^2(q_1ld_\square)\mu(l)q_1d_\square\sum_{t\sim T}\\&\times\sumflat_{s\sim S}\alpha_\pi^\prime(slt^2q_1^2d_\square^2)\underset{(c, t)=1}{\sumflat_{c\sim \frac{C}{q_1^2l^2d_{\square}^2}}}\eps_c^2\chi_\pi(c)\left(\frac{sl}{c}\right)
		\end{split}
	\end{equation}


    \subsection{Trivial estimate of $\M(X)$} We recover here the trivial bound for $\M(X)$. For this it would be enough to bound $\M^\text{odd}(C, X; \alpha_\pi^\prime)$. At first we estimate the expression trivially i.e. just by using the triangle inequality, and for this we need the following easy yet crucial bound of $\alpha_\pi^\prime$,
 
    \begin{lemma}[Pointwise estimate of $\alpha_\pi^\prime(v)$]\label{thm: pointwise bd of alpha}
        Let $v\in\Z$. We have, 
        \begin{equation}
            \alpha_\pi^\prime(v)\ll p^\eps.
        \end{equation}
    \end{lemma}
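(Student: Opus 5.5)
The bound follows from the triangle inequality together with three inputs: a divisor-type count for the representations $4m-n^2=v$, the square-root cancellation of Corollary~\ref{cor: sq cancellation}, and the temperedness hypothesis. Recall
\[
\alpha_\pi^\prime(v)=\underset{4m-n^2=v,\ n\ \mathrm{odd}}{\sum_{m\sim M}\sum_{n\sim N}}\frac{\lambda_{\bar\pi}(m)}{\sqrt m}\,H(-n^2/4m;p),
\]
with $M=p\mathfrak X^2$ and $N=\sqrt p|\mathfrak X|$.

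First, since $p\nmid m$ on the support (the lemma on vanishing of $\lambda_\pi(m)$ for $p\mid m$), the argument $-n^2/4m$ is a well-defined element of $\F_p$. If it is nonzero, Corollary~\ref{cor: sq cancellation} gives $H\ll 1$ with an absolute constant. The degenerate case $p\mid n$ has $H$ either $O(1)$ after the same opening of Gauss sums, or in any case bounded trivially, and I will check it separately. Second, temperedness of $\pi_\nu$ for all finite $\nu$ gives $\lambda_{\bar\pi}(m)\ll m^{\eps}\ll p^{\eps}$, and $1/\sqrt m\asymp 1/\sqrt M$. This reduces the claim to
\[
\alpha_\pi^\prime(v)\ll p^\eps M^{-1/2}\,\#\{(m,n): m\sim M,\ n\sim N,\ 4m-n^2=v\}.
\]

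For the counting step, observe that once $n$ is fixed, $m=(v+n^2)/4$ is determined. The number of terms is therefore at most the number of $n\sim N$, which is $\ll N$. This gives
\[
\alpha_\pi^\prime(v)\ll p^\eps N/\sqrt M\asymp p^\eps\cdot\frac{\sqrt p|\mathfrak X|}{\sqrt p|\mathfrak X|}=p^\eps.
\]
So no divisor bound is needed; the key point is that $N\asymp\sqrt M$, which is forced by the stationary-phase localisation $m\asymp p\mathfrak X^2$, $|n|\asymp\sqrt p|\mathfrak X|$ obtained in the reduction of the integral transform. I will verify that the smooth weights $U(m/M)V(|n|/N)$ really impose these sizes. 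I will also verify that the range $|\mathfrak X|\ll p^{\eps}$ does not spoil the ratio, since both $N$ and $\sqrt M$ scale with $|\mathfrak X|$.

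The main obstacle is ensuring uniformity of $H(\lambda;p)\ll1$ in $\lambda$, including $\lambda=1$, where the sheaf may fail to be lisse when $m=n$ rank. Here I will use that a middle-extension sheaf that is pure of weight $0$ has trace function bounded by its rank at every point, including singular ones. This gives an absolute bound and completes the proof.
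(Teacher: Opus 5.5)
Your proof is correct and is essentially the paper's argument: triangle inequality, temperedness for $\lambda_{\bar\pi}(m)\ll p^{\eps}$, the uniform bound $H\ll 1$, and the observation that $n$ determines $m$. Your remark that a weight-zero middle extension has trace bounded by its rank also covers the singular point $\lambda=1$, which the paper's square-root cancellation corollary passes over. The only difference is cosmetic, plus one simplification you missed. The paper sums $1/\sqrt{m}=2/\sqrt{n^2+v}$ over $n\ll p^{1/2+\eps}$ to get a logarithm, whereas you use $m\asymp M$ and $|n|\asymp N=\sqrt{M}$. Your degenerate case $p\mid n$ never arises, since $0<|n|\ll p^{1/2+\eps}<p$.
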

    \begin{proof}
	By the temperedness of $\pi$ and the square-root cancellation bound for the character sum $H(4m/n^2; p)$ we have,
	\begin{equation*}
		\alpha_\pi^\prime(v)\ll_\eps \underset{4m-n^2=v}{\sum_{m\sim M}\sum_{n\sim N}}\frac{1}{\sqrt{m}}\ll \sum_{n\ll p^{1/2+\eps}}\frac{1}{\sqrt{n^2+v}}\ll p^\eps.
	\end{equation*}
    \end{proof}

    We plug this estimate in the equation~\eqref{eq:final expression}, and use the bound \eqref{eq: st^2 bound}  for $ST^2$, 
    $$
    \M_u^\text{odd}(C, X; \alpha_\pi^\prime)\ll_\eps\sum_{q_1ld_\square\ll \sqrt{C}}\frac{STC}{q_1d_\square l^2}\ll_\eps pC\sum_{q_1d_\square l\ll \sqrt{C}}\frac{1}{q_1^2d_\square^2l^3}\ll pC.
    $$
    This implies, 
    \[
    \M^\text{odd}(C, X)\ll p^{1+\eps}\sqrt{CQ}, 
    \]
    which gives
    \[
    \M^\text{odd}(X)\ll\sup_{C\ll Q}\M^\text{odd}(C, X)\ll p^{1+\eps}Q=Xp^{1/2+\eps}.
    \]
    Therefore we finally have,
    \[
    \M(X)\ll X p^{1/2+\eps}.
    \]
    
    So to obtain the trivial bound we need to save $\sqrt{p}$. For this we exploit the presence of quadratic character to get square-root cancellation for one of the over $s$ and $c$. We would achieve this square root cancellation by applying the quadratic large sieve inequality. 
	\begin{theorem}[Quadratic large sieve]\label{thm:large sieve}
		Let $M, N\gg 1$, then for any sequence of complex numbers $(a_n)$ we have
		\begin{equation}\label{eq:large-sieve-1}
			\sumflat_{m\leq M}\left|\sumflat_{n\leq N}a_n\left(\frac{n}{m}\right)\right|^2\ll (M+N)(MN)^\eps\sum_{n\leq N}|a_n|^2,
		\end{equation}
		and
		\begin{equation}\label{eq:large-sieve-2}
			\sumflat_{m\leq M}\left|\sum_{n\leq N} a_n \left(\frac{n}{m}\right)\right|^2\ll (M+N)(MN)^\eps\sum_{n_1n_2=\square}|a_{n_1}a_{n_2}|,
		\end{equation}  
		where the sums are restricted to odd square-free integers.
	\end{theorem}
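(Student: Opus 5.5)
The plan follows the Heath-Brown route: first the large sieve for arbitrary coefficients with characters $\chi_m(n)=\left(\frac{n}{m}\right)$, then the second inequality derived from the first.

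\textbf{Duality.} The first bound \eqref{eq:large-sieve-1} is, by duality, symmetric in $m$ and $n$ up to replacing $(M,N)$ by $(N,M)$. Since both moduli and arguments are odd squarefree, quadratic reciprocity gives $\left(\frac{n}{m}\right)=\pm\left(\frac{m}{n}\right)$, with the sign depending only on $m,n \bmod 4$. Splitting into residue classes mod $4$ makes the problem symmetric, so we may assume $N\le M$ or $M\le N$ freely.

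\textbf{First inequality via Heath-Brown.} Rather than reprove it, I would invoke Heath-Brown's quadratic large sieve: for odd squarefree $m\le M$, $n\le N$,
\[
\sumflat_{m\le M}\Bigl|\sumflat_{n\le N}a_n\Bigl(\frac{n}{m}\Bigr)\Bigr|^2\ll_\eps (MN)^\eps(M+N)\sum|a_n|^2 .
\]
This is exactly \eqref{eq:large-sieve-1}. For completeness, its proof sketch is as follows. Let $B(M,N)$ denote the optimal constant. The trivial bound $B\ll MN$ handles tiny ranges. Poisson summation in $m$ (completing the Jacobi symbol modulo $n_1n_2$) shows that off-diagonal terms $n_1n_2\ne\square$ contribute $\ll N^2\sqrt{\cdot}$-type quantities relative to dual sums. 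Heath-Brown then obtains the recursive inequality
\[
B(M,N)\ll M+ M^{\eps}\,\frac{M}{N'}\,B\!\left(N',N\right)+\cdots ,
\]
which is iterated with a factorization trick: write $n=n_1n_2$ and use Cauchy--Schwarz together with multiplicativity. This is the main obstacle, and I would simply cite it as known.

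\textbf{Second inequality.} Here $n$ runs over all (odd) integers, not only squarefree ones. Write $n=n_0k^2$ with $n_0$ squarefree. Since $m$ is squarefree, $\left(\frac{n}{m}\right)=\left(\frac{n_0}{m}\right)$ when $(k,m)=1$, and it is $0$ otherwise. Handle the condition $(k,m)=1$ by M\"obius inversion over $d\mid(k,m)$, or bound it directly by grouping:
\[
\sum_n a_n\Bigl(\frac nm\Bigr)=\sumflat_{n_0\le N} b_{n_0}^{(m)}\Bigl(\frac{n_0}{m}\Bigr),\qquad b^{(m)}_{n_0}=\sum_{\substack{k\\ (k,m)=1}}a_{n_0k^2}.
\]
To remove the dependence on $m$, write $1_{(k,m)=1}=\sum_{d\mid (k,m)}\mu(d)$ and pull $d$ outside. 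For each $d$, the sum over $m$ with $d\mid m$ and $m=dm'$ factors $\left(\frac{n_0}{d}\right)\left(\frac{n_0}{m'}\right)$, and the factor depending only on $n_0$ is absorbed into the coefficients. After Cauchy--Schwarz over $d\le \sqrt N$ (losing $N^\eps$ from the divisor bound), apply \eqref{eq:large-sieve-1} with coefficients $c_{n_0}=\sum_{k: d\mid k}a_{n_0k^2}$. This yields $(M+N)(MN)^\eps\sum_{n_0}|c_{n_0}|^2$, and
\[
\sum_{n_0}|c_{n_0}|^2\le \sum_{n_0}\Bigl(\sum_k |a_{n_0k^2}|\Bigr)^2=\sum_{n_1n_2=\square}|a_{n_1}a_{n_2}| ,
\]
because $n_1n_2$ is a square exactly when $n_1,n_2$ share the same squarefree part. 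This gives \eqref{eq:large-sieve-2}.
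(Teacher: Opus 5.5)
The paper gives no argument for this statement. It quotes Heath-Brown \cite{MR1347489}: Theorem~1 for \eqref{eq:large-sieve-1} and Corollary~2 for \eqref{eq:large-sieve-2}. You quote the same Theorem~1 for \eqref{eq:large-sieve-1}. Your sketch of its proof is only impressionistic, so that part rests entirely on the citation, as in the paper. For \eqref{eq:large-sieve-2}, however, you deduce it from \eqref{eq:large-sieve-1} yourself, writing $n=n_0k^2$ and removing $(k,m)=1$ by M\"obius inversion. That route is legitimate and more self-contained, and it shows where the weight $\sum_{n_1n_2=\square}|a_{n_1}a_{n_2}|$ comes from. But the bookkeeping, as you wrote it, does not close.

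The problem is the sum over $d$. The Cauchy--Schwarz step must be taken over $d\mid m$, which costs the divisor function $\tau(m)\ll M^\eps$; taken over all $d\le\sqrt N$ it would cost $\sqrt N$. Either way it leaves an outer sum over $d\le\min(M,\sqrt N)$. You bound each $d$ separately by $\sum_{n_0}|c^{(d)}_{n_0}|^2\le\sum_{n_1n_2=\square}|a_{n_1}a_{n_2}|$, so summing over $d$ loses up to a factor $\sqrt N$, which destroys the inequality. You have to sum over $d$ before bounding:
\[
\sum_{d}\sum_{n_0}\Bigl|\sum_{k:\,d\mid k}a_{n_0k^2}\Bigr|^2\le\sum_{n_0}\sum_{k_1,k_2}\tau\bigl((k_1,k_2)\bigr)\,|a_{n_0k_1^2}a_{n_0k_2^2}|\ll N^\eps\sum_{n_1n_2=\square}|a_{n_1}a_{n_2}|.
\]

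There are two smaller points. First, you do not need the factorisation $m=dm'$: the restriction $d\mid m$ can simply be dropped by positivity before applying \eqref{eq:large-sieve-1}. Second, the $n$-sum in \eqref{eq:large-sieve-2} is not restricted to odd $n$, so the squarefree part $n_0$ may be even, while \eqref{eq:large-sieve-1} only allows odd $n$. In that case write $n_0=2n_0'$ and use $\bigl(\frac{2n_0'}{m}\bigr)=\bigl(\frac{2}{m}\bigr)\bigl(\frac{n_0'}{m}\bigr)$ with $\bigl|\bigl(\frac{2}{m}\bigr)\bigr|=1$. Splitting by parity then costs only a factor $2$. With these repairs your derivation of \eqref{eq:large-sieve-2} is complete.
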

	\begin{proof}
		For a proof see~\cite[Theorem 1, Corollary 2]{MR1347489}.
	\end{proof}
	Applying the Cauchy's inequality, Lemma~\ref{thm: pointwise bd of alpha}, and followed by the quadratic large sieve we can bound $\M^\text{odd}(C, X)$ by,
	\begin{equation*}
		\begin{split}
			\sqrt{\frac{Q}{C}}\sum_{q_1d_\square l\ll \sqrt{C}}q_1d_\square\sum_{t\sim T}\left(\frac{C\sqrt{S}}{q_1^2d_\square^2l^2}+\frac{S\sqrt{C}}{q_1d_\square l}\right)&\ll \sum_{q_1ld_\square\ll\sqrt{C}}\frac{\sqrt{QCST^2}}{q_1l^2d_\square}+\frac{ST\sqrt{Q}}{l}\\
			&\hspace{-1cm}\ll_\eps \frac{pC}{\sqrt{Q}}\sum_{q_1ld_\square\ll\sqrt{C}}\frac{1}{q_1^2d_\square^2l^2}=O\left(p^{1+\eps}C/\sqrt{Q}\right).
		\end{split}
	\end{equation*}
	Therefore taking supremum over $C\ll Q$ we get our trivial bound,
    \[
    \M(X)\ll \sqrt{X}p^{3/4+\eps}.
    \]
    Hence it is instructed that to obtain a non-trivial bound we need to further exploit the presence of quadratic character and the oscillation of the sequence $\alpha_\pi(v)$.

	\section{Non-generic case}\label{sec: non-generic}
	In this section we have obtained a non-trivial upper bound for $\M^\text{odd}(C, X)$ in some cases, and as remarked earlier it would be enough to estimate the term $\M^\text{odd}(C, X; \alpha_\pi^\prime)$. We localize the size of $q_1d_\square l$ by smoothly decomposing the left most sum in the definition~\eqref{eq:final expression} in dyadic blocks of size $D$, where $D$ is some parameter which will be optimized later.

	\begin{equation}\label{eq:final_2}
			\begin{split}
				\M^\text{odd}(C, X; \alpha_\pi^\prime)=&\sum_{q_1ld_\square\sim D}\chi_\pi^2(q_1ld_\square)\mu(l)q_1d_\square\sum_{t\sim T}\sumflat_{s\sim S}\alpha_\pi^\prime(slt^2q_1^2d_\square^2)\\&\hspace{3.7cm}\times\underset{(c, t)=1}{\sumflat_{c\sim \frac{C}{q_1^2l^2d_{\square}^2}}}\eps_c^2\chi_\pi(c)\left(\frac{sl}{c}\right).
			\end{split}
	\end{equation}

	First we prove the following lemma which gives a non-trivial bound of this when $D$ is large.

	\begin{lemma}[Large $D$]\label{thm: large D}
		Let $D\geq p^{\delta_1}$ for some $\delta_1>0$. We have,
		\begin{equation}\label{eq:non-gen 1}
			\M^\text{odd}(C,X)\ll \sqrt{X}p^{\frac{3}{4}-\delta_1+\eps}.
		\end{equation}
	\end{lemma}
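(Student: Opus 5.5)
The plan is simply to rerun the trivial-bound argument of the preceding subsection (Cauchy's inequality, the pointwise bound of Lemma~\ref{thm: pointwise bd of alpha}, and the quadratic large sieve of Theorem~\ref{thm:large sieve}) on \eqref{eq:final_2}. This time we do not sum the outer variables $q_1ld_\square$ freely over $\ll\sqrt C$. Instead we keep them in the dyadic block $q_1ld_\square\sim D$ and track the resulting decay in $D$.

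Fix $q_1,l,d_\square$ with $q_1ld_\square\sim D$ and fix $t\sim T$. By Cauchy--Schwarz over $c$, the inner double sum over $(s,c)$ is bounded by
\[
\Big(\tfrac{C}{q_1^2l^2d_\square^2}\Big)^{1/2}\Big(\sumflat_{c}\Big|\sumflat_{s\sim S}\alpha'_\pi(slt^2q_1^2d_\square^2)\Big(\tfrac{sl}{c}\Big)\Big|^2\Big)^{1/2}.
\]
Here the twist $\chi_\pi(c)\eps_c^2$ is harmless: $\eps_c^2=\pm1$ depends only on $c\bmod 4$, so we split $c$ into classes mod $4$, and $\chi_\pi(c)$ is absorbed into the modulus of the outer sum. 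The factor $(l/c)$ is likewise absorbed as a unimodular weight on $c$. Then \eqref{eq:large-sieve-1} combined with $\alpha'_\pi\ll p^\eps$ gives
\[
\ll p^\eps\Big(\tfrac{C\sqrt S}{q_1^2l^2d_\square^2}+\tfrac{S\sqrt C}{q_1ld_\square}\Big),
\]
exactly as in the trivial estimate. Multiplying by $q_1d_\square$ and summing over $t\sim T$, we get the two contributions
\[
\frac{CT\sqrt S}{q_1d_\square l^2}\quad\text{and}\quad \frac{ST\sqrt C}{l}.
\]
We now insert $ST^2\ll p^{1+\eps}C/(q_1^2d_\square^2lQ)$ from \eqref{eq: st^2 bound}, which gives $T\sqrt S\ll \sqrt{pC/Q}/(q_1d_\square\sqrt l)$ and $ST\le ST^2$. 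Each of the two contributions is then at most
\[
p^{1/2+\eps}\frac{C^{3/2}}{\sqrt Q}\cdot\frac{1}{(q_1d_\square l)^2}\quad\text{or}\quad p^{1+\eps}\frac{C^{3/2}}{Q}\cdot\frac{1}{(q_1d_\square l)^2}.
\]

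Summing over the block, $\sum_{q_1ld_\square\sim D}(q_1ld_\square)^{-2}\ll D^{-1+\eps}$ (up to divisor counting). We then multiply by the prefactor $\sqrt{Q/C}$ from \eqref{eq:decomp. of M(C, X)} and use $C\le Q$, $Q=X/\sqrt p$. This yields
\[
\M^{\rm odd}(C,X)\ll p^{1+\eps}\frac{C}{\sqrt Q}D^{-1}\ll p^{1+\eps}\sqrt Q\,D^{-1}=\sqrt X p^{3/4+\eps}D^{-1}\le\sqrt X p^{3/4-\delta_1+\eps},
\]
which is the claimed bound.

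The step that needs care is the accounting in the ``trivial'' computation. There the sum $\sum (q_1^2d_\square^2l^2)^{-1}$ was simply bounded by $O(1)$, whereas here we have to check that each term genuinely carries a full factor $(q_1d_\square l)^{-2}$, and not merely $(q_1d_\square)^{-2}l^{-1}$. The factor $l$ enters both through the length of $c$ (as $l^{-2}$) and through \eqref{eq: st^2 bound} (as $l^{-1}$), and the second large-sieve term $S\sqrt C/(q_1ld_\square)$ loses a power of $D$ relative to the first. If that term only gives $D^{-1/2}$ decay, I would handle it by noting that it dominates only when $S\gg C/D^2$. In that regime I would use $S\le ST^2$ together with the sharper length bound for $c$ to recover the full $D^{-1}$, or else accept $D^{-1/2}$ and renormalize $\delta_1$ accordingly.
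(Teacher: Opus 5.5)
Your proposal is correct and is essentially the paper's proof. The paper also just reruns the Cauchy--Schwarz plus quadratic large sieve trivial estimate on the dyadic block $q_1ld_\square\sim D$, obtaining $\frac{pC}{\sqrt Q}\sum_{q_1ld_\square\sim D}(q_1ld_\square)^{-2}\ll \sqrt X p^{3/4+\eps}D^{-1}$.

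Your closing worry is unnecessary: your own computation already shows that, after inserting the bound on $ST^2$, both large-sieve terms carry the full factor $(q_1d_\square l)^{-2}$. The fallback of accepting only $D^{-1/2}$ decay would not give the stated exponent anyway.
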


	\begin{proof}
		The proof is quite immediate, following the lines of proof of trivial estimate for $\M^\text{odd}(C, X)$ from the previous section we have,
		\begin{equation}
			\M^\text{odd}(C, X)\ll \frac{pC}{\sqrt{Q}}\sum_{q_1ld_\square\sim D}\frac{1}{q_1^2l^2d_\square^2}\ll\sqrt{X}p^{3/4+\eps}/D\leq \sqrt{X}p^{\frac{3}{4}-\delta_1+\eps}.
		\end{equation}
		Hence we have our result.
	\end{proof}

	Now it is left to treat the case when $D$ is small. In this case by further exploiting the presence of quadratic character we would be able to obtain a non-trivial bound when $T$ is quite large or in words when $S$ is smaller than its generic size. And in this regard we have the following lemma.

	\begin{lemma}[Small $D$ and large $T$]\label{thm: small D and large T}
		Let $D\leq p^{\delta_1}$, and $T\geq p^{\delta_2}$ for some $\delta_1$, and $\delta_2>0$. We have,
		\begin{equation}
			\M^\text{odd}(C, X)\ll_\eps \sqrt{X}p^{\frac{3}{4}-\delta_2}+\sqrt{X}p^{\frac{3}{4}+\frac{\delta_1}{4}-\frac{\delta_2}{2}}. 
		\end{equation}
	\end{lemma}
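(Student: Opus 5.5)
The plan is to start from \eqref{eq:final_2} with $q_1ld_\square\sim D\le p^{\delta_1}$ and $t\sim T\ge p^{\delta_2}$. By \eqref{eq: st^2 bound} the square-free variable is then short, $S\ll p^{1+\eps}C/(D^2T^2Q)$. I would bring the $c$-sum to the inside as a character sum over $c$ for each fixed $(q_1,l,d_\square,t,s)$. By quadratic reciprocity, $\left(\frac{sl}{c}\right)$ equals $\left(\frac{c}{sl}\right)$ up to a factor depending only on the classes of $c,s,l$ modulo $4$. This factor, and also $\eps_c^2$, can be absorbed by splitting $c$ into residue classes modulo $4$ and $s,l$ modulo $4$. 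The $c$-sum then becomes
\[
\sum_{c\sim C/(q_1^2l^2d_\square^2)} \chi_\pi(c)\psi_{4sl}(c)\,\mathbf 1_{(c,t)=1}\,\mu^2(c)\,W(c),
\]
where $\psi_{4sl}$ is a character of modulus dividing $4sl$. Before this I need to remove the conditions $(c,t)=1$ and $\mu^2(c)=1$. For the coprimality I use Möbius inversion over $e\mid t$. For square-freeness I write $\mu^2(c)=\sum_{f^2\mid c}\mu(f)$; the terms with large $f$ are estimated trivially.

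The product $\chi_\pi\psi_{4sl}$ is a non-principal character of conductor about $p\cdot sl$, because $\chi_\pi$ is primitive modulo $p$ and $p\nmid sl$. I apply Poisson summation modulo $4psl$ (or Pólya–Vinogradov with smoothing) to the $c$-sum, which has length $C':=C/D^2$. The dual sum is a sum of Gauss sums of size $\sqrt{psl}$ over dual frequencies $h\ll psl\,D^2/C$. For each fixed $s$ this gives
\[
\sum_{c}\cdots \ll \frac{C'}{\sqrt{psS}}\Bigl|\sum_{h\ll pSD^2/C}\overline{\chi_\pi\psi}(h)\,\widehat W(\cdot)\Bigr|,
\]
and the inner dual sum carries the character $\left(\frac{h}{s}\right)$, up to twists that do not depend on $s$.

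After this I sum over $s$ against $\alpha_\pi'$, which is $\ll p^\eps$ by Lemma~\ref{thm: pointwise bd of alpha}, and apply Cauchy together with Theorem~\ref{thm:large sieve} in the variables $(s,h)$. This gives
\[
\sum_s\Bigl|\sum_h a_h\left(\tfrac hs\right)\Bigr|\ll \sqrt S\,\bigl(S+pSD^2/C\bigr)^{1/2}\bigl(pSD^2/C\bigr)^{1/2}.
\]
I then multiply by the factor $C'/\sqrt{pS}$, the sum over $t\sim T$, and the weights $q_1d_\square\le D$. Finally I multiply by $\sqrt{Q/C}$ and take the supremum over $C\ll Q$, using $ST^2\ll pC/(D^2Q)$ and $Q=X/\sqrt p$. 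I expect the first term to give the saving $T^{-1}$ over the trivial bound $\sqrt X p^{3/4}$, namely $\sqrt Xp^{3/4-\delta_2}$. The second term, coming from the dual length $pSD^2/C$ in the large sieve, should cost a factor $D^{1/2}$ and gain $T^{-1/2}$. Together with $D\le p^{\delta_1}$ (after the bookkeeping with the $D$-powers), this should give $\sqrt Xp^{3/4+\delta_1/4-\delta_2/2}$.

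When $C$ is small, $C'$ may be shorter than $\sqrt{psl}$. In that range I would not use Poisson. Instead I apply the large sieve directly in $(c,s)$ as in the trivial estimate. Since $S\ll p/T^2$, that already saves a factor $T$.

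I expect the main obstacle to be the exponent bookkeeping. The Poisson step and the quadratic large sieve must be balanced correctly across the ranges of $C$, and the non-multiplicative conditions $(c,t)=1$, square-freeness of $c$, and the $\eps_c^2$ and reciprocity sign factors must be removed without losing more than $p^\eps$. The exact powers of $D$ in the second term come from the weights $q_1d_\square$ and the lengths $C/D^2$. They are the delicate point, and I would check them carefully against the claimed $\delta_1/4$.
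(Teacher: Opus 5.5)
Your overall architecture matches the paper's: M\"obius inversion over $e\mid t$, detecting square-freeness of $c$ by $\sum_{f^2\mid c}\mu(f)$, reciprocity with a split modulo $4$, Poisson in $c$ modulo $4psl$ so that the Gauss sum produces $\left(\frac{h}{s}\right)$, and then Cauchy and the quadratic large sieve in $(s,h)$. However, the step that actually produces the second term $\sqrt X p^{3/4+\delta_1/4-\delta_2/2}$ is missing, and the dual length does not produce it in the way you claim.

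Write $c=c_0f^2$. For fixed $f$ the $c_0$-sum has length $C'/f^2$ and dual length $H_f\asymp pslf^2/C'$. The dual part of the large-sieve bound is therefore $\frac{C'}{f^2\sqrt{psl}}\cdot\sqrt S\cdot H_f\asymp S\sqrt{pl}$, which does not decay in $f$. Sum over $t\sim T$ and over the $D$-block; the $\sqrt D$ comes from $\sum_{l\le D}l^{-1/2}$. After multiplying by $\sqrt{Q/C}$ this term is $\ll\sqrt D\,p^{3/2}T^{-1}$ \emph{for each} $f$. It saves the full $T$, not $T^{1/2}$, but it is paid once per $f$. So on the Poisson side the $f$-sum cannot run up to its natural length $\sqrt{C'}$. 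It must be cut at $f\le\Delta$, which costs $\Delta\sqrt D\,p^{3/2-\delta_2}$.

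The tail $f>\Delta$ is where the loss sits. ``Estimated trivially'' is not enough if it means the triangle inequality: that gives about $SC'/\Delta$, i.e.\ $\sqrt p\,X/(T\Delta)$ after normalisation. Balancing this against $\Delta p^{3/2}/T$ only yields $\sqrt X\,p^{1-\delta_2}$, which is worse than $\sqrt X p^{3/4}$ unless $\delta_2>1/4$.

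The paper instead bounds the tail by Cauchy and the quadratic large sieve in $(s,c_0)$, without Poisson. This gives $\sqrt S\,C'/f^2$ per $f$, hence $X/\Delta$ in total. Balancing $X/\Delta$ against $\Delta\sqrt D\,p^{3/2-\delta_2}$, i.e.\ taking $\Delta\asymp X^{1/2}T^{1/2}D^{-1/4}p^{-3/4}$, gives exactly $\sqrt X D^{1/4}p^{3/4-\delta_2/2}$. Both the $T^{-1/2}$ and the $D^{1/4}$ come from the square root in this optimisation. Your accounting, a factor $D^{1/2}$ and a gain $T^{-1/2}$ taken straight from the dual length, would give $\delta_1/2$ rather than $\delta_1/4$.

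A smaller point: $\left(\frac{\cdot}{sl}\right)$ is primitive only modulo $s_1l_1$, where $e_1=(s,l)$, $s=s_1e_1$ and $l=l_1e_1$. The paper extracts $e_1$ first, so that the complete sum modulo $s_1l_1$ is exactly $\sqrt{s_1l_1}\left(\frac{c_0p}{s_1l_1}\right)$.
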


	\begin{proof}
		First we need to detect the co-primality condition $(c, t)=1$. Using the M\"obius function we write it as,
		\begin{equation*}
			1_{(c, t)=1}=\sum_{e|(c, t)}\mu(e).
		\end{equation*}
	We write $c=c_1e$, hence~\eqref{eq:final_2} becomes,                                                                        
		\begin{equation*}
			\begin{split}
				\M^\text{odd}(C, X; \alpha_\pi^\prime)&=\sum_{q_1ld_\square\sim D}\chi_\pi^2(q_1d_\square l)\mu(l)q_1d_\square\sum_{t\sim T}\sum_{e|t} \mu(e)\chi_\pi(e)\\&
				\times \sumflat_{s\sim S}\alpha_\pi^\prime(slt^2q_1^2d_\square^2)\left(\frac{sl}{e}\right)\sumflat_{c_1\sim \frac{C}{eq_1^2l^2d_\square^2}}\eps_{c_1e}^2\chi_\pi(c_1)\left(\frac{sl}{c_1}\right)
			\end{split}
		\end{equation*}
		Let the gcd $(s, l)=e_1$, we write $s=s_1e_1$ and $l=l_1e_1$, therefore we have,
		\begin{equation}\label{eq: main eq for lemma 2}
			\begin{split}
				\M^\text{odd}(C, X; \alpha_\pi^\prime)&=\sum_{q_1l_1e_1d_\square\sim D}\chi_\pi^2(q_1l_1e_1d_\square)\mu(l_1e_1)q_1d_\square\sum_{t\sim T}\sum_{e|t}\mu(e)\chi_\pi(e)\left(\frac{l_1}{e}\right)\\&
				\hspace{1.1cm}\times\sumflat_{s_1\sim S/e_1}\alpha_\pi^\prime(s_1l_1e_1^2t^2q_1^2d_\square^2)\left(\frac{s_1}{e}\right)\sumflat_{c_1\sim C_1}\eps_{c_1e}^2\chi_\pi(c_1)\left(\frac{s_1l_1}{c_1}\right)
			\end{split}
		\end{equation}
		To avoid notational cumbersome we have defined, 
		\begin{equation}\label{eq: C_1}
			C_1:=\frac{C}{eq_1^2l_1^2e_1^2d_\square^2}.
		\end{equation}
		By quadratic reciprocity law we could write the $c_1$-sum as following,
		\begin{equation*}
			\begin{split}
				\sumflat_{c_1\sim C_1}\eps_{c_1e}^2\chi_\pi(c_1)\left(\frac{s_1l_1}{c_1}\right)=\underset{c_1\equiv 1\md{4}}{\sumflat_{c_1\sim C_1}}+(-1)^{\frac{s_1l_1-1}{2}}\underset{c_1\equiv 3\md{4}}{\sumflat_{c_1\sim C_1}}\eps_{c_1e}^2\chi_\pi(c_1)\left(\frac{c_1}{s_1l_1}\right)
			\end{split}
		\end{equation*}
		As $\eps_{c_1e}^2=\pm 1\md{4}$ according to $c_1e\equiv\pm 1\md{4}$, so the right side of the above equation can be rewritten as,
		\begin{equation*}
		     \eps_{e}^4\left(\underset{c_1\equiv 1\md{4}}{\sumflat_{c_1\sim C_1}}+(-1)^{\frac{s_1l_1-1}{2}}\underset{c_1\equiv 3\md{4}}{\sumflat_{c_1\sim C_1}}\right)(-1)^{\frac{c_1e-1}{2}}\chi_\pi(c_1)\left(\frac{c_1}{s_1l_1}\right)
		\end{equation*}
		As both of the sums are are similar up to some signs depending on $s_1, l_1, c_1$ and $e$. So the analysis will be similar to the following one, 
		\begin{equation*}
			\underset{c_1\equiv 1 \md{4}}{\sumflat_{c_1\sim C_1}}\chi_\pi(c_1)\left(\frac{c_1}{s_1l_1}\right)
		\end{equation*}
		Now detecting the congruence condition $c_1\equiv 1 \md{4}$ using additive characters modulo $4$ we have,
		\begin{equation*}
			\sum_{\alpha\md{4}}4e\left(-\frac{\alpha}{4}\right)\sumflat_{c_1\sim C_1}\chi_\pi(c_1)e\left(\frac{\alpha c_1}{4}\right)\left(\frac{c_1}{s_1l_1}\right).
		\end{equation*}
		Now we want to apply Poisson summation formula on the $c_1$-sum but as it runs over only square-free integers, we need to detect the square-freeness by M\"obius to free from square-free restriction. Let $\Delta\geq 1$ be a parameter, 
		$$
		\mu^2(c_1)=\underset{\delta\leq \Delta}{\sum_{\delta^2|c_1}}\mu(\delta)+\underset{\delta>\Delta}{\sum_{\delta^2|c_1}}\mu(\delta).
		$$
		Write $c_1=c_0\delta^2$ then $\M^\text{odd}(C, X; \alpha_\pi^\prime)$ can be written as a combination of the sums of following type,           
		\begin{equation*}
			\mathcal{S}_1+\mathcal{S}_2,
		\end{equation*}
		where, 
		\begin{equation*}
			\begin{split}
				\mathcal{S}_1&=\sum_{q_1l_1e_1d_\square\sim D}\chi_\pi^2(q_1l_1e_1d_\square)\mu(l_1e_1)q_1d_\square\sum_{t\sim T}\sum_{e|t}\mu(e)\chi_\pi(e)\left(\frac{l_1}{e}\right)\\&\hspace{4cm}\times\sum_{\alpha\md{4}}e\left(-\frac{\alpha}{4}\right)\underset{(\delta,\, l_1)=1}{\sum_{\delta\leq\Delta}}\mu(\delta)\chi_\pi^2(\delta)\\&
				\times\underset{(s_1, \delta)=1}{\sumflat_{s_1\sim S/e_1}}\alpha^\prime(s_1l_1e_1^2t^2q_1^2d_\square^2)\left(\frac{s_1}{e}\right)\sum_{c_0\sim \frac{C_1}{\delta^2}}\chi_\pi(c_0)e\left(\frac{\alpha c_0\delta^2}{4}\right)\left(\frac{c_0}{s_1l_1}\right)
			\end{split}
		\end{equation*}
		and 
		\begin{equation*}
			\begin{split}
				\mathcal{S}_2&=\sum_{q_1l_1e_1d_\square\sim D}\chi_\pi^2(q_1l_1e_1d_\square)\mu(l_1e_1)q_1d_\square\sum_{t\sim T}\sum_{e|t}\mu(e)\chi_\pi(e)\left(\frac{l_1}{e}\right)\\&\hspace{4cm}\times\sum_{\alpha\md{4}}e\left(-\frac{\alpha}{4}\right)\underset{(\delta,\, l_1)=1}{\sum_{\delta>\Delta}}\mu(\delta)\chi_\pi^2(\delta)\\&
				\times\underset{(s_1, \delta)=1}{\sumflat_{s_1\sim S/e_1}}\alpha^\prime(s_1l_1e_1^2t^2q_1^2d_\square^2)\left(\frac{s_1}{e}\right)\sum_{c_0\sim \frac{C_1}{\delta^2}}\chi_\pi(c_0)e\left(\frac{\alpha c_0\delta^2}{4}\right)\left(\frac{c_0}{s_1l_1}\right)
			\end{split}
		\end{equation*}

		We first deal with the $\mathcal S_2$-sum. For this sum we simply use the quadratic large sieve~\eqref{eq:large-sieve-2} with the quadratic charcter $\left(\frac{c_0}{s_1}\right)$. Following the line of proof of trivial estimate from previous section, we have,

		\begin{equation}\label{eq: est of S_2}
			\begin{split}
				\mathcal{S}_2\ll_\eps \sqrt{\frac{C}{Q}}\sum_{q_1l_1e_1d_\square\sim D}\sum_{t\sim T}\sum_{e|t}\sum_{\delta>\Delta}\left(\frac{p\sqrt{C}/T^2\delta}{q_1^2l_12e_1^3d_\square^2\sqrt{e}}+
			\frac{C\sqrt{p}/T\delta^2}{q_1^2l_1^{5/2}e_1^3d_\square^2}\right)&\\&\hspace{-4.5cm}
			\ll_\eps \sqrt{\frac{C}{Q}}\left(\sqrt{X}p^{\frac{3}{4}-\delta_2}+\frac{X}{\Delta}\right).
			\end{split}
		\end{equation}
		Now we estimate $\mathcal{S}_1$. First we note that $c_0$-sum is running over odd numbers, so to make it free from this arithmetic condition we rewrite $c_0$-sum as following, 
		\begin{equation*}
			\sum_{c_0\sim \frac{C_1}{\delta^2}}\chi_\pi(c_0)e\left(\frac{\alpha c_0\delta^2}{4}\right)\left(\frac{c_0}{s_1l_1}\right)-\left(\frac{2}{s_1l_1}\right)\sum_{c_0\sim \frac{C_1}{2\delta^2}}\chi_\pi(c_0)e\left(\frac{\alpha c_0\delta^2}{2}\right)\left(\frac{c_0}{s_1l_1}\right)
		\end{equation*}

		Now we apply the Poisson summation formula separately on both of these sums. Since the analysis will be same for both of the sums, so we just treat the first sum. After the applying the summation formula we have, 

		\begin{equation*}
			\frac{C_1}{4ps_1l_1\delta^2}\sum_{c_0}\sum_{x\md{4ps_1l_1}}\chi_\pi(x)e\left(\frac{\alpha x\delta^2}{4}\right)\left(\frac{x}{s_1l_1}\right)e\left(\frac{c_0x}{4ps_1l_1}\right)\int_{\xi\sim 1}e\left(\frac{c_0C_1\xi}{4ps_1l_1\delta^2}\right)\, d\xi.
		\end{equation*}

		Up to a negligible error by the integration by-parts bound we truncate the $c_0$-sum to $c_0\ll 4p^{1+\eps}s_1l_1\delta^2/C_1$. Now we evaluate the character sum, using the Chinese remainder theorem we rewrite the $x\md{4ps_1l_1}$-sum as, 

		\begin{equation*}
			\sum_{x\md{4p}}\chi_\pi(x)e\left(\frac{\alpha x\delta^2}{4}\right)e\left(\frac{c_0\overline{s_1l_1}x}{4p}\right)\sum_{y\md{s_1l_1}}\left(\frac{y}{s_1l_1}\right)e\left(\frac{c_0\overline{4p}y}{s_1l_1}\right)
		\end{equation*}

		Since $s_1l_1$ is odd and square-free so, by a simple change of variable the $y\md{s_1l_1}$-sum becomes, 

		\begin{equation*}
			\sqrt{s_1l_1}\left(\frac{c_0p}{s_1l_1}\right).
		\end{equation*}

		And again by the Chinese remainder theorem and a change of variable, we could write $x\md{4p}$-sum as,

		\begin{equation*}
			\sqrt{p}\,\chi_\pi(4\bar{c_0}s_1l_1)\eps(\chi_\pi)\sum_{x\md{4}}e\left(\frac{\alpha ps_1l_1\delta^2 x}{4}+\frac{c_0x}{4}\right).
		\end{equation*}

		We remark that in the above the sum we have not evaluated the the $x\md{4}$-sum because otherwise it would get reduce to a congruence condition involving $s_1$ and $c_0$ which in turn would break the bilinear structure between the sums over $s_1$ and $c_0$. By combining them we get required evaluation for the  character sum. Hence up to an negligible error and suppressing the wight function under the asymptotic sign, dual of the $c_0$-sum becomes,
		\begin{equation*}
			\begin{split}
				&\frac{C_1}{4\delta^2\sqrt{ps_1l_1}}\chi_\pi(4s_1l_1)\eps(\chi_\pi)\left(\frac{p}{s_1l_1}\right)\sum_{x\md{4}}e\left(\frac{\alpha s_1l_1\delta^2 x}{4}\right)\\&\hspace{2.3cm}\times\sum_{c_0\ll \frac{4p^{1+\eps}s_1l_1\delta^2}{C_1}}\overline{\chi}_\pi(c_0)e\left(\frac{c_0x}{4}\right)\left(\frac{c_0}{s_1l_1}\right).
			\end{split}
		\end{equation*}
		Now we plug this dual expression of $c_0$-sum in $\mathcal S_1$, and using triangle inequality we could estimate $\mathcal S_1$ as follows,
		\begin{equation*}
			\begin{split}
				\mathcal{S}_1&\ll_\eps\frac{C}{\sqrt{p}}\sum_{q_1l_1e_1d_\square\sim D}\frac{1}{q_1l_1^{5/2}e_1^2d_\square}\sum_{t\sim T}\sum_{e|t}\frac{1}{e}\sum_{x\md{4}}\sum_{\delta\leq \Delta}\frac{1}{\delta^2}\\&
				\times \sumflat_{s_1\sim \frac{S}{e_1}}|s_1|^{-1/2}|\alpha_\pi^\prime(s_1l_1e_1^2t^2q_1^2d_\square^2)|\left|\sum_{c_0\ll_\eps \frac{pSl_1\delta^2}{e_1C_1}}\beta(c_0, x, l_1)\left(\frac{c_0}{s_1}\right)\right|,
			\end{split}
		\end{equation*}
		where
		\begin{equation*}
			\beta(c_0, x, l_1):=\bar{\chi}_\pi(c_0)e\left(\frac{c_0x}{4}\right)\left(\frac{c_0}{l_1}\right).
		\end{equation*}
		Now we could apply the quadratic large sieve inequality~\eqref{eq:large-sieve-2} and the bound Lemma~\ref{thm: pointwise bd of alpha} which implies, 
		\begin{equation*}
			\begin{split}
				\mathcal{S}_1&\ll_\eps \frac{C}{\sqrt{p}}\sum_{q_1l_1e_1d_\square\sim D}\frac{1}{q_1l_1^{5/2}e_1^2d_\square}\sum_{t\sim T}\sum_{e|t}\frac{1}{e}\sum_{\delta\leq \Delta}\frac{1}{\delta^2}\\&\hspace{1.1cm}\times\left(\frac{S\delta\sqrt{p}}{\sqrt{C}}q_1l_1^{3/2}d_\square\sqrt{e}+\frac{pS\delta^2}{C}q_1^2l_1^3e_1d_\square^2\right)
			\end{split}
		\end{equation*}

		Further simplification gives,

		\begin{equation}\label{eq: estimate of S_1}
			\mathcal{S}_1\ll\sqrt{\frac{C}{Q}}\left(\sqrt{X}p^{3/4-\delta_2+\eps}+\Delta\sqrt{D}p^{3/2-\delta_2+\eps}\right).
		\end{equation}

		Combining the estimates~\eqref{eq: estimate of S_1} and \eqref{eq: est of S_2} we have, 
		\begin{equation}
			\M^\text{odd}(C, X; \alpha_\pi^\prime)\ll_\eps \sqrt{X}p^{\frac{3}{4}-\delta_2}+X\Delta^{-1}+\Delta\sqrt{D}p^{3/2-\delta_2}.
		\end{equation}
		Optimizing the parameter $\Delta$ in the above equation completes the proof of this lemma.

	\end{proof}

	\section{Generic case}\label{sec: generic}
	
	   In previous section we have obtained a non-trivial bound for $\M_u^\text{odd}(C, X)$ in various cases (see Lemma~\ref{thm: large D} and Lemma~\ref{thm: small D and large T}). The only remaining case is when $D$ and $T$ both of them are small which we call as generic case. In this case we would obtain non-trivial bound for $\M_u^\text{odd}(C, X)$ by improving the $\ell^2$-norm estimate of sequence $\alpha^\prime(\cdot)$ in $p$-aspect. We record our result in this direction as the following lemma. 
	
	\begin{lemma}[Small $D$ and small $T$]\label{thm:Small $D$ and small $T$}
		Let $D\leq P^{\delta_1}$ and $T\leq p^{\delta_2}$, then we have
		\begin{equation}
			\M^\text{odd}(C, X)\ll_\eps Xp^{\delta_2-\frac{1}{16}}+\sqrt{X}p^{\frac{3}{4}-\frac{1}{16}}.
		\end{equation}
	\end{lemma}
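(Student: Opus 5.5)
Starting from \eqref{eq:final_2}, I will apply Cauchy's inequality in the pair $(s,t)$ together with the quadratic large sieve \eqref{eq:large-sieve-1} in the $c$-variable. Since $D,T$ are small, this reduces everything to a bound for the $\ell^2$-norm
\[
\mathcal N(v_0):=\sum_{s\sim S}|\alpha_\pi^\prime(s v_0)|^2,\qquad v_0=lt^2q_1^2d_\square^2.
\]
The target is $\mathcal N(v_0)\ll S\,p^{-1/8+\eps}$, which improves the trivial bound $S p^{\eps}$ coming from Lemma~\ref{thm: pointwise bd of alpha}. Re-running the large-sieve computation of the trivial estimate then gives a gain of $p^{-1/16}$ on the term $\frac{C\sqrt{S}}{q_1^2d_\square^2 l^2}$. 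The loss $T\le p^{\delta_2}$ from summing over $t$ crudely, and the complementary term $\frac{S\sqrt C}{q_1d_\square l}$ (which again gains $p^{-1/16}$ through the $\ell^2$ norm), should then produce the two terms $Xp^{\delta_2-1/16}$ and $\sqrt{X}p^{3/4-1/16}$.

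To bound $\mathcal N$, I extend $s$ to all $v\equiv 0 \md{v_0}$ of size $\ll p^{1+\eps}C/Q$, which is only an upper bound, and open $\alpha_\pi^\prime$ via \eqref{eq:alpha}. Following the overview, I will not remove both Hecke eigenvalues at once. Instead I will parametrize $m_2=m_1-(n_1^2-n_2^2)/4$ and apply Cauchy in $m_1$, using the Rankin--Selberg bound $\sum_{m\sim M}|\lambda_\pi(m)|^2/m\ll p^\eps$, to obtain $\mathcal N\ll \tilde\Omega^{1/2}$ with
\[
\tilde\Omega=\sum_{m_1\sim M}\Big|\sum_{n_1,n_2\sim N}\frac{\lambda_{\bar\pi}(m_2)}{\sqrt{m_2}}C(4m_1,n_1^2;p)\overline{C(4m_2,n_2^2;p)}\Big|^2 .
\]
Here the divisibility by $v_0$ is a congruence on $n_1$, which I carry along. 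Opening the square, I split off the diagonal terms where $n_1^2-n_2^2=n_3^2-n_4^2$, which contribute $\ll p^{1+\eps}$ by a divisor bound. For the off-diagonal terms I set $h=(n_3^2-n_4^2-n_1^2+n_2^2)/4$ and apply Cauchy once more over $(m_1,h)$ to remove $\lambda_{\bar\pi}(m_2)\lambda_\pi(m_2+h)$. This leaves an eight-variable sum over $n_1,\dots,n_8$ subject to one quadratic relation, which has $O(p^{3+\eps})$ points, weighted by the complete-in-spirit sum over $m_1\sim p$ of a product of eight $C(\cdot,\cdot;p)$ factors.

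For the $m_1$-sum I apply Poisson summation modulo $p$. This gives the length factor times a normalized complete sum
\[
\frac1p\sum_{x\in\F_p}\prod_{j} H\big(\lambda_j(x);p\big)^{\pm}e_p(hx),
\]
where the $\lambda_j(x)=-n_j^2/(4x+c_j)$ are fractional linear in $x$. If $H$ is the trace function of the sheaf $\mathcal H$ from Lemma~\ref{thm:geo prop}, then this is the trace function of a tensor product of pullbacks of $\mathcal H$ and $\mathcal H^\vee$ under distinct Möbius maps. By Deligne's theorem it is $O(p^{-1/2})$ unless this tensor product contains a geometrically trivial (or Artin--Schreier) constituent.

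This is the main obstacle. I will use the hypothesis $\pi_p\in\mathcal S$, i.e.\ $G^0_{\rm geom}\ne\{1\}$, which by Katz's classification of hypergeometric monodromy (rank $2$) forces $G^0_{\rm geom}=\SL_2$. Then a Goursat--Kolchin--Ribet argument shows that pullbacks by distinct fractional linear maps are geometrically non-isomorphic, even after twisting by rank-one sheaves, unless the maps coincide. This reduces the failure of square-root cancellation to the \emph{degenerate} configurations where the shifts $c_j$ pair up, giving a subvariety of codimension at least one in the $(n_1,\dots,n_8)$ point count, so there are $O(p^{2+\eps})$ such tuples. Combining these, $\tilde\Omega\ll p^{1+\eps}+p^{-1}\big(p^{3}\cdot p\cdot p^{-1/2}+p^2\cdot p\big)^{1/2}\cdots$, which after normalization gives the $p^{-1/8}$ saving for $\mathcal N$. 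Establishing the independence of the shifted sheaves cleanly, including the ramification at $0,\infty,1$ and possible Kummer twists, is the step where I expect the real work to lie.
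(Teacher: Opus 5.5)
Your proposal is correct and is essentially the paper's argument. It has the same steps in the same order:
\begin{enumerate}[1.]
\item Cauchy plus the quadratic large sieve \eqref{eq:large-sieve-1} reduce the problem to $\tilde\alpha(\omega)=\sum_{s\sim S}|\alpha_\pi'(s\omega)|^2$.
\item A first Cauchy in $m_1$ removes one Hecke eigenvalue.
\item A second Cauchy in $(m,h)$ removes the other. Here the outer variable must be the shifted $m=m_1+(n_2^2-n_1^2)/4$, not $m_1$ as you wrote.
\item Poisson summation modulo $p$ in $m$ produces a complete sum.
\item Square-root cancellation for the product of eight M\"obius pullbacks of $\mathcal H$ follows from $G^0_{\mathrm{geom}}=\SL_2$ and Goursat--Kolchin--Ribet. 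This gives $\Theta\ll p^{7/2+\eps}$ and hence $\tilde\alpha(\omega)\ll p^{7/8+\eps}$.
\end{enumerate}

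The remaining differences are bookkeeping. You dispose of $h=0$ by a divisor bound at the $\tilde\Omega$ stage, and of coincidences via an even-multiplicity criterion, where the paper uses separate case lemmas. Two small corrections:
\begin{itemize}
\item A single coincidence such as $n_2=n_6$ leaves $\asymp p^{5/2}$ tuples, not $p^{2}$, so ``codimension one'' does not give $O(p^{2+\eps})$. This is harmless, since $p^{5/2}\cdot p$ still matches the generic $p^{7/2}$.
\item Your argument gives the $S$-uniform bound $p^{7/8}$, not $Sp^{-1/8}$. That uniform bound is exactly what the stated lemma needs.
\end{itemize}
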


	\begin{proof}
		We star by recalling the equation $\M^\text{odd}(C, X; \alpha_\pi^\prime)$ as given in~\eqref{eq:final_2}, 
		\begin{equation*}
			\begin{split}
				\M^\text{odd}(C, X; \alpha_\pi^\prime)=&\sum_{q_1ld_\square\sim D}\chi_\pi^2(q_1ld_\square)\mu(l)q_1d_\square\sum_{t\sim T}\sumflat_{s\sim S}\alpha_\pi^\prime(slt^2q_1^2d_\square^2)\\&\hspace{3.7cm}\times\underset{(c, t)=1}{\sumflat_{c\sim \frac{C}{q_1^2l^2d_{\square}^2}}}\eps_c^2\chi_\pi(c)\left(\frac{sl}{c}\right).
			\end{split}
		\end{equation*}

		The initial step is similar to the proof of trivial estimate, we apply Cauchy's inequality over $s$-sum and then apply the quadratic large sieve~\eqref{eq:large-sieve-1}. We have, 
		\begin{equation}\label{thm: generic ineq 1}
			\M^\text{odd}(C, X; \alpha_\pi^\prime)\ll_\eps\sum_{q_1ld_\square\sim D}q_1d_\square\sum_{t\sim T}\left(\frac{C}{q_1^2l^2d_\square^2}+\frac{\sqrt{CS}}{q_1ld_\square}\right)\sqrt{\tilde\alpha(lq_1^2d_\square^2t^2)},
		\end{equation}
		where,
		\begin{equation*}
			\tilde\alpha(\omega):=\sum_{s\sim S}\left|\alpha_\pi^\prime(s\omega)\right|^2.
		\end{equation*}

		Now our objective is to estimate the size of $\tilde\alpha(\omega)$. Opening the absolute value square we have,
		\begin{equation*}
			\begin{split}
				\tilde\alpha(\omega)&=\sum_{s\sim S}\underset{4m_1-n_1^2=s\omega=4m_2-n_2^2}{\sum_{m_1\ll p^{1+\eps}}\sum_{n_1\ll p^{1/2+\eps}}\sum_{m_2\ll p^{1+\eps}}\sum_{n_2\ll p^{1/2+\eps}}}\frac{\lambda_\pi(m_1)}{\sqrt{m_1}}\frac{\lambda_{\bar\pi}(m_2)}{\sqrt{m_2}}C(4m_1, n_1^2; p)\overline{C(4m_2, n_2^2; p)}
			\end{split}
		\end{equation*}

		Note that we have used denoted $H(-n^2/4m; p)$ for notational purpose. Note that the above expression of $\tilde\alpha(\omega)$ already contains many terms, which would make the equations in the further analysis quite cumbersome. So to avoid this we will treat the oscillatory integrals $\mathcal{J}(\cdot)$ as smooth weight functions and suppress the weight function  under the asymptotic sign. Hence the simplified expression is,

		\begin{equation*}
			\tilde\alpha(\omega)=\sum_{s\sim S}\underset{4m_1-n_1^2=s\omega=4m_2-n_2^2}{\sum_{m_1\ll p^{1+\eps}}\sum_{n_1\ll p^{1/2+\eps}}\sum_{m_2\ll p^{1+\eps}}\sum_{n_2\ll p^{1+\eps}}}\frac{\lambda_\pi(m_1)}{\sqrt{m_1}}\frac{\lambda_{\bar\pi}(m_2)}{\sqrt{m_2}}C(4m_1, n_1^2; p)\overline{C(4m_2, n_2^2; p)}
		\end{equation*}

		The Diagonal contribution of this is given by,

		\begin{equation}\label{eq: first diagonal estimate}
		\underset{4m_1\equiv n_1^2\md{\omega}}{\sum_{m_1\ll p^{1+\eps}}\sum_{n_1\ll p^{1/2+\eps}}}\frac{|\lambda_\pi(m_1)|^2}{m_1}|C(4m_1, n_1^2; p)|^2\ll p^{\frac{1}{2}+\eps},
		\end{equation}

		Where the last estimate follows from the Ramanujan bound for Fourier coefficients (though Ramanujan bound on average is enough), and the square root cancellation bound Lemma~\ref{thm: Square-root cancellation} Note that if $n_1=n_2$ then it forces that $m_1=m_2$. So we are left with the off-diagonal part which we call as $\Omega$.

		\begin{equation*}
			\begin{split}
				\Omega&:=2\sum_{m_1\ll p^{1+\eps}}\frac{\lambda_\pi(m_1)}{\sqrt{m_1}}\underset{n_1\neq n_2}{\underset{n_1, n_2\ll p^{1/2+\eps}}{\sum\,\sum}}\frac{\lambda_{\bar \pi}\left(m_1+\frac{n_2^2-n_1^2}{4}\right)}{\sqrt{4m_1+n_2^2-n_1^2}}\\&\hspace{1.6cm}\times C(4m_1, n_1^2; p)\overline{C(4m_1+n_2^2-n_1^2, n_2^2; p)}
			\end{split}
		\end{equation*}
		In this step we first remove the  $\lambda_\pi(m_1)/\sqrt{m_1}$, by appling Cauchy's inequality on the $m_1$-sum we get,

        \begin{equation*}
            \Omega\ll_\eps \sqrt{\tilde\Omega},
        \end{equation*}
        where, 
        
		\begin{equation*}
			\begin{split}
				\tilde\Omega = \sum_{m_1\ll p^{1+\eps}}\Biggm|\underset{n_1\neq n_2}{\underset{n_1^2\equiv 4m_1\md{\omega}}{\sum_{n_1\ll p^{1/2+\eps}}\sum_{n_2\ll p^{1/2+\eps}}}}&\frac{\lambda_{\bar \pi}\left(m_1+\frac{n_2^2-n_1^2}{4}\right)}{\sqrt{4m_1+n_2^2-n_1^2}}C(4m_1, n_1^2; p)\\&\hspace{0.7cm}\times\overline{C(4m_1+n_2^2-n_1^2, n_2^2; p)}\Biggm|^2.
			\end{split}
		\end{equation*}
		Opening the absolute value square it get equals, 
        
        \begin{equation*}
            \begin{split}
                \tilde\Omega=&\sum_{m_1\ll p^{1+\eps}}\underset{n_1\neq n_2}{\underset{n_1^2\equiv 4m_1\md{\omega}}{\sum_{n_1\ll p^{\frac{1}{2}+\eps}}\sum_{n_2\ll p^{\frac{1}{2}+\eps}}}}\underset{n_3\neq n_4}{\underset{n_3^2\equiv 4m_1\md{\omega}}{\sum_{n_3\ll p^{\frac{1}{2}+\eps}}\sum_{n_4\ll p^{\frac{1}{2}+\eps}}}}\frac{\lambda_{\bar\pi}\left(m_1+\frac{n_2^2-n_1^2}{4}\right)}{\sqrt{4m_1+n_2^2-n_1^2}}\frac{\lambda_{\pi}\left(m_1+\frac{n_4^2-n_3^2}{4}\right)}{\sqrt{4m_1+n_4^2-n_3^2}}\\&\hspace{0.2
                cm}\times
                C(4m_1, n_1^2; p)\overline{C(4m_1+n_2^2-n_1^2, n_2^2; p)}\,\,\overline{C(4m_1, n_3^2; p)}C(4m_1+n_4^2-n_3^2, n_4^2; p)
            \end{split}    
        \end{equation*}
	   The diagonal contribution, say $\tilde\Omega_0$, is given by, 

        \begin{equation*}
            \begin{split}
                \tilde\Omega_0 = \sum_{m_1\ll p^{1+\eps}}\underset{n_1^2\equiv 4m_1\md{\omega}}{\sum_{n_1\ll p^{\frac{1}{2}+\eps}}}\sum_{n_2\ll p^{\frac{1}{2}+\eps}}&\frac{\left|\lambda_\pi\left(m_1+\frac{n_2^2-n_1^2}{4}\right)\right|^2}{4m_1+n_2^2-n_1^2}|C(4m_1, n_1^2; p)|^2 \\&\hspace{2cm}\times|C(4m_1+n_2^2-n_1^2)|^2
            \end{split}
        \end{equation*}
        Using the square-root cancellation bound and there after by identifying $4m_1+n_2^2-n_1^2=u$ we get, 

        \begin{equation*}
            \tilde\Omega_0\ll_\eps \sum_{1\leq u\ll p^{1+\eps}}\frac{|\lambda_\pi(4u)|^2}{u}\sharp\{(m_1, n_1, n_2) : 4m_1+n_2^2-n_1^2=u\}
        \end{equation*}
	   It is obvious that the above cardinality is $O(p^{1+\eps})$ and plugging this estimate and using the Ramanujan bound on average we get, 

	   \begin{equation}\label{eq: omega_0 estimate}
		\tilde\Omega_0\ll_\eps p^{1+\eps}.
	   \end{equation}
	   Now we estimate the contribution, say $\tilde\Omega_{00}$, when at least one of the $n_1$ and $n_2$ is equal to either $n_3$ or $n_4$. Up to $p^\eps$, this contribution is bounded by the sum of the following type,
	   \begin{equation*}
		\begin{split}
			&\sum_{m_1\ll p^{1+\eps}}\sum_{n_1\ll p^{1/2+\eps}}\left(\sum_{n_2\ll p^{\frac{1}{2}+\eps}}\frac{1}{\sqrt{4m+n_1^2-n_2^2}}\right)^2\\&\ll p^{1/2+\eps}\sum_{m_1\ll p^{1+\eps}}\sum_{n_1, n_2\ll p^{1/2+\eps}}\frac{1}{4m_1+n_1^2-n_2^2}
		\end{split}
	   \end{equation*}
	   Following the lines of the proof of diagonal estimate we can bound this sum by $p^{3/2+\eps}$.\\

	   Now we are left with the off diagonal term where none of the $n_i$ is equal to $n_j$, we call it as \textit{super off-diagonal}, which we call $\tilde\Omega_*$. At first we make the following change of variables,  
	   $$
	   4m=4m_1+n_2^2-n_1^2,\hspace{0.5cm}\text{and}\hspace{0.5cm} 4h=n_1^2-n_2^2-n_3^2+n_4^2.
	   $$
	   Due to the first change of variable we replce the congruence, $n_1^2\equiv 4m_1\md{\omega}$ by $n_2^2\equiv 4m\md{\omega}$. Now we can rewrite the off-diagonal contribution as following, 

	   \begin{equation*}
		\tilde\Omega_*=\sum_{m\ll p^{1+\eps}}\sum_{|h|\ll p^{1+\eps}}\frac{\lambda_{\bar\pi}(m)}{\sqrt{m}}\frac{\lambda_{\pi}(4(m+h))}{\sqrt{m+h}}\underset{n_1^2-n_2^2-n_3^2+n_4^2=4h}{\underset{n_i\neq n_j;\, \forall i\neq j}{\underset{n_2^2\equiv 4m\md{\omega}}{\underset{n_1^2\equiv n_3^2\md{\omega}}{\underset{n_1, n_2, n_3, n_4 \ll p^{\frac{1}{2}+\eps}}{\sum\cdots\sum}}}}}\mathfrak{C}(n_1, n_2, n_3, n_4; m),
	   \end{equation*}
	   where, 
	   \begin{equation}\label{eq:mathfrak C}
		\begin{split}
			&\hspace{0.cm}\mathfrak{C}(n_a, n_b, n_c, n_d; m):=C(4m+n_a^2-n_b^2, n_a^2; p)\overline{C(4m, n_b^2; p)}\\&\times\overline{C(4m+n_a^2-n_b^2, n_c^2; p)} C(4m+n_a^2-n_b^2-n_c^2+n_d^2, n_d^2; p).
		\end{split}
	   \end{equation}
	   Now we smooth out the sums over $m$ and $h$ by applying the Cauchy's inequality. We have,

	   \begin{equation}\label{eq: est of omega_*}
		\tilde\Omega_*\ll_\eps \sqrt{\Theta}.
	   \end{equation}
	    where, 

	    \begin{equation*}
		\Theta=\sum_{m\ll p^{1+\eps}}\sum_{|h|\ll p^{1+\eps}}\left|\underset{n_1^2-n_2^2-n_3^2+n_4^2=4h}{\underset{n_2^2\equiv 4m\md{\omega}}{\underset{n_1^2\equiv n_3^2\md{\omega}}{\underset{n_i\neq n_j\, \forall i\neq j}{\underset{n_1, n_2, n_3, n_4 \ll p^{\frac{1}{2}+\eps}}{\sum\cdots\sum}}}}}\mathfrak{C}(n_1, n_2, n_3, n_4; m)\right|^2.
	    \end{equation*}
	    After opening absolute square and executing the $h$-sum it becomes,

	    \begin{equation*}
		\Theta=\sum_{m\ll p^{1+\eps}}\underset{n_1^2-n_2^2-n_3^2+n_4^2=n_5^2-n_6^2-n_7^2+n_8^2}{\underset{n_2^2\equiv 4m \md{\omega}, n_6^2\equiv 4m\md{\omega}}{\underset{n_1^2\equiv n_3^2\md{\omega}, n_5^2\equiv n_7^2\md{\omega}}{\underset{n_i\neq n_j (\text{resp.}\, n_{i+4}\neq n_{j+4});\,1\leq i\neq j\leq 4}{\sum_{n_1}\cdots\sum_{n_4}\sum_{n_5}\cdots\sum_{n_8}}}}}\mathfrak{C}(n_1, n_2, n_3, n_4; m)\overline{\mathfrak{C}(n_5, n_6, n_7, n_8; m)}
	    \end{equation*}
	    Interchanging the sums we could rewrite $\Theta$ as,

	    \begin{equation}\label{eq: before square root cancellation}
		\begin{split}
			\Theta=\underset{n_1^2\equiv n_3^2\md{\omega}, n_2^2\equiv n_6^2\md{\omega}, n_5^2\equiv n_7^2\md{\omega}}{\underset{n_1^2-n_2^2-n_3^2+n_4^2=n_5^2-n_6^2-n_7^2+n_8^2}{\underset{n_i\neq n_j(\text{resp.}\, n_{i+4}\neq n_{j+4})\, 1\leq i\neq j\leq 4}{\sum_{n_1}\cdots\sum_{n_4}\sum_{n_5}\cdots\sum_{n_8}}}}\hspace{-0.7cm}\underset{4m\equiv n_2^2\md{\omega}}{\sum_{m\ll p^{1+\eps}}}\mathfrak{C}(n_1,\cdots, n_4; m)\overline{\mathfrak{C}(n_5,\cdots, n_8; m)}
		\end{split}
	    \end{equation}                                               
	    
          There are some cases where the trivial estimate of $\Theta$ would suffices. We record these in following lemmas.   

	    \begin{lemma}\label{thm: first}
		If $n_2=n_6$ or $n_4=n_8$ then, 
		\begin{equation}
			\Theta\ll p^{7/2+\eps}. 
		\end{equation}
	    \end{lemma}
	    \begin{proof}
		Let us consider the case $n_2=n_6$, and proof in the other case will be similar. By triangle inequality and using the square-root estimate (Corollary~\ref{cor: sq cancellation}) we get, 
		\begin{equation*}
			\Theta\ll p^{3/2+\eps}\underset{n_1^2 + n_7^2 + n_4^2 = n_5^2 + n_3^2 + n_8^2}{\sum\cdots\sum}1=p^{3/2+\eps}\sum_{d\ll p^{1+\eps}}r_3(d)^2.
		\end{equation*}
		where $r_3(d)$ is a number ways an integer can be expressed as a sum of three squares i.e., 
		\begin{equation*}
			r_3(d):=\underset{n_1^2+n_2^2+n_3^2=d}{\sum\cdots\sum}1.
		\end{equation*}
		Using the point-wise estimate $r_3(d)\ll \sqrt{d}$ (see~\cite[\S 20.4]{MR2061214}) we have 
		\begin{equation*}
			\Theta\ll p^{3/2+\eps}\sum_{d\ll p^{1+\eps}}d\ll p^{7/2+\eps}.
		\end{equation*}
	    \end{proof}
	    \begin{lemma}\label{thm: second}
		If $n_1^2+n_4^2$ is equal to one of the $n_2^2+n_3^2$ and $n_3^2+n_8^2$ we have,
		\begin{equation}
			\Theta\ll p^{3+\eps}.
		\end{equation}
	    \end{lemma}
	    \begin{proof}
		We consider the case, $n_1^2+n_4^2=n_2^2+n_3^2$, and other case will follow similarly. In this case, triangle inequality, and along with the square-root cancellation estimate, we can bound $\Theta$ by, 
		\begin{equation*}
			\Theta\ll p^{1+\eps}\left(\underset{n_1^2+n_4^2=n_2^2+n_3^2}{\sum_{n_1}\sum_{n_4}\sum_{n_2}\sum_{n_3}}\;1\right)^2=p^{1+\eps}\left(\sum_{d\ll p^{1+\eps}}r_2^2(d)\right)^2
		\end{equation*}
		where, $r_2(d)$ stands for the number of represenstations of $d$ as sum of two squares, and using its well knwon estimate $r_2(d)\ll d^{1+\eps}$ we obtain our required estimate. 
	    \end{proof}
         Similarly we have,
	    \begin{lemma}\label{thm: third}
		If $n_1^2+n_6^2$ is equal to one of the $n_2^2+n_3^2$ and $n_2^2+n_5^2$ then we have,
		\begin{equation}
			\Theta\ll p^{3+\eps}.
		\end{equation}
	    \end{lemma}\qed

	    Now our job is to bound $\Theta$ non-trivially when $n_i$'s statisfies none of the above relations. We will do so by estimating the $m$-sum in~\eqref{eq: before square root cancellation} non-trivially. We begin by writing the congruence $4m\equiv n_2^2\md{\omega}$ as $4m=n_2^2+k\omega$. The $m$-sum in~\eqref{eq: before square root cancellation} becomes,

	    \begin{equation}\label{eq: sum-k-before poisson}
		\sum_{1\leq k\ll \frac{p^{1+\eps} -n_2^2}{\omega}}\mathfrak{C}\left(n_1, \cdots, n_4; \frac{n_2^2+k\omega}{4}\right)\mathfrak{C}\left(n_5, \cdots, n_8; \frac{n_2^2+k\omega}{4}\right)
	    \end{equation}

	    First step would be to turn it into a complete sum by applying the Poisson summation formula. Up to a negligible error it becomes,

	    \begin{equation}\label{eq: sum-k-after poisson}
		\begin{split}
			\frac{p^{1+\eps}-n_2^2}{p\omega}\sum_{k\ll \frac{p^{1+\eps}\omega}{p^{1+\eps}-n_2^2}}\mathcal C(k),
		\end{split}
	    \end{equation}
	    where the character-sum is given by, 
	    \begin{equation*}
		\begin{split}
			\mathcal C(k):=\sum_{x\md{p}}\mathfrak{C}\left(n_1, \cdots, n_4; \frac{n_2^2+x\omega}{4}\right)\mathfrak{C}\left(n_5, \cdots, n_8; \frac{n_2^2+x\omega}{4}\right)e\left(\frac{xk}{p}\right)
		\end{split}
	    \end{equation*}
	    So it would be enough to estimate this character-sum non-trivially. Unwinding the definition of $\mathfrak{C}(\cdots)$ we could rewrite it as, 
	    \begin{equation}\label{eq:C(k)}
		\begin{split}
			\mathcal C(k)= &\sum_{x\in\F_p}H\left(\frac{x\omega+n_1^2}{n_1^2}\right)H\left(\frac{x\omega+n_2^2}{n_6^2}\right)H\left(\frac{x\omega+n_1^2-n_3^2+n_4^2}{n_4^2}\right)\\&\hspace{0.4cm}\times H\left(\frac{x\omega+n_2^2+n_5^2-n_6^2}{n_7^2}\right)
			\bar{H}\left(\frac{x\omega+n_1^2}{n_3^2}\right)\bar{H}\left(\frac{x\omega+n_2^2}{n_2^2}\right)\\&\hspace{-.2cm}\times \bar{H}\left(\frac{x\omega+n_1^2-n_3^2+n_4^2}{n_8^2}\right)\bar{H}\left(\frac{x\omega+n_2^2+n_5^2-n_6^2}{n_5^2}\right)e\left(\frac{xk}{p}\right)
		\end{split}
	    \end{equation}
	    We have estimated this exponential-sum and proved its square-root cancellation in Lemma~\ref{thm: Square-root cancellation} of the next section.\\
        \begin{lemma}\label{thm: fourth}
            Let $n_i\neq n_j$ and $n_{i+4}\neq n_{j+4}$ for any $1\leq i\neq j\leq 4$. If $n_i$'s satisfy none of the conditions of Lemma~\ref{thm: first}, Lemma~\ref{thm: second}, and Lemma~\ref{thm: third}. Then we have, 
            \begin{equation*}
                \Theta\ll p^{7/2+\eps}.
            \end{equation*}
        \end{lemma}
        \begin{proof}
            Plugging the bound of $\mathcal C(k)\ll \sqrt p$ (Lemma~\ref{thm: Square-root cancellation}) in \eqref{eq: sum-k-after poisson} we bound \eqref{eq: sum-k-before poisson} from above by $p^{1/2+\eps}$. Hence combining this with the expression~\eqref{eq: before square root cancellation} of $\Theta$ we get,
            \begin{equation*}
                \Theta\ll p^{1/2+\eps}\sum_{h\ll p^{1+\eps}}\left(\underset{n_1^2+\cdots+n_4^2=h}{\sum_{n_1}\cdots\sum_{_{n_4}}}\;1\right)^2=p^{1/2+\eps}\sum_{h\ll p^{1+\eps}}r_4(h)^2
            \end{equation*}
            Similarly as before, using the point-wise bound, $r_4(h)\ll h$ we get our expected bound.
        \end{proof}
        
        Finally combining all of the above estimates in~\eqref{thm: generic ineq 1} we get, 
        \begin{equation*}
            \M^\text{odd}(C, X; \alpha_\pi^\prime)\ll p^{7/16+\eps}\sum_{q_1ld_\square\sim D}\sum_{t\sim T}\left(\frac{C}{q_1d_\square l}+\frac{\sqrt{CS}}{l}\right)
        \end{equation*}
        Therefore, 
        \begin{equation*}
            \M^\text{odd}(C, X)\ll p^{7/16+\eps}\sum_{q_1ld_\square\sim D}\sum_{t\sim T}\left(\frac{\sqrt{CQ}}{q_1d_\square l}+\frac{\sqrt{CS}}{l}\right)
        \end{equation*}
        Now we use the bounds, $C\ll Q={X}/{\sqrt{p}}$, and $ST^2\ll p^{1+\eps}/q_1^2d_\square^2l$. After this executing outer summations we get our desired bounds. This completes the proof of Lemma~\ref{thm:Small $D$ and small $T$}.
	    \end{proof}

	    \section{Estimate of an exponential-sum}\label{sec: exp sums} 

        Our objective of this section is to estimate the exponential sum $\mathcal C(k)$. To do so, we use the method of $\ell$-adic cohomology.  

	    First we make few relabeling as follows, 

	    \begin{equation}\label{eq: change of variables}
		\begin{split}
			&n_1\mapsto m_1;\; n_2\mapsto m_6;\; n_3\mapsto m_5;\; n_4\mapsto m_4,\\
			& n_5\mapsto m_7;\; n_6\mapsto m_2;\; n_7\mapsto m_3;\; n_8\mapsto m_8.
		\end{split}
	    \end{equation}
	    Therefore~\eqref{eq:C(k)} becomes 

	    \begin{equation}
		\begin{split}
			\mathcal C(k)= &\sum_{x\in\F_p}H\left(\frac{x\omega+m_1^2}{m_1^2}\right)H\left(\frac{x\omega+n_6^2}{m_2^2}\right)H\left(\frac{x\omega+m_6^2+m_7^2-m_2^2}{m_3^2}\right)\\&\hspace{0.3cm}\times H\left(\frac{x\omega+m_1^2-m_5^2+m_4^2}{m_4^2}\right)
			\bar{H}\left(\frac{x\omega+m_1^2}{m_5^2}\right)\bar{H}\left(\frac{x\omega+m_6^2}{m_6^2}\right)\\&\hspace{-.4cm}\times \bar{H}\left(\frac{x\omega+m_6^2+m_7^2-m_2^2}{m_7^2}\right)\bar{H}\left(\frac{x\omega+m_1^2-m_5^2+m_4^2}{m_8^2}\right)e\left(\frac{xk}{p}\right)
		\end{split}
	    \end{equation}
	    Note that the relation $n_1^2-n_2^2-n_3^2+n_4^2=n_5^2-n_6^2-n_7^2+n_8^2$ transforms into
	    \begin{equation}
		m_1^2+m_2^2+m_3^2+m_4^2=m_5^2+m_6^2+m_7^2+m_8^2.
	    \end{equation}
	     We have shown in \S\ref{sec:hyp geo sum} that we can attach an $\ell$-adic sheaf to $H(\lambda)$, whose geometric properties are listed in Lemma~\ref{thm:geo prop}. Consider the following sheaf, 
		\begin{equation}
			\mathcal F=\bigotimes_{1\leq i\leq 4}(\gamma_i^*\mathcal H\otimes D(\gamma_{i+4}^*\mathcal H))\otimes \mathcal L_{\psi(k X)}
		\end{equation}
		where, 
		\begin{equation*}
			\gamma_i=
			\begin{pmatrix}
				c & b_i\\
				0 & m_i^2
			\end{pmatrix}\in \PGL_2(\overline\F_p)\hspace{0.2cm}\text{for}\hspace{0.2cm}1\leq i\leq 8,\hspace{0.2cm}\text{and}\hspace{0.2cm} b_i=b_{i+4}\neq 0.
		\end{equation*}
		with, 
		\begin{equation*}
			b_1=m_1^2,\; b_2=m_6^2,\; b_3=m_6^2+m_7^2-m_2^2,\hspace{0.2cm}\text{and}\hspace{0.2cm} b_4=m_1^2+m_4^2-m_5^2.
		\end{equation*}
		And $\gamma_i^*\mathcal H$ is the pullback sheaf of $\mathcal H$ along $\gamma_i$, from geometric properties Lemma~\ref{thm:geo prop} it follows that its singularity set is,

		Consider the following set $U\subset \A^1(\F_p)$
		\begin{equation*}
			U(\F_p)=\A^1(\F_p)\backslash\bigcup_{1\leq i\leq 8}\left\{\frac{-b_i}{c},\; \frac{m_i^2-b_i}{c}\right\}
		\end{equation*}
		So, up to an error of size $O(1)$ the sum $\mathcal C(k)$ becomes, 
		\begin{equation*}
			\begin{split}
				\mathcal C(k)&=\sum_{x\in U(\F_p)}H(\gamma_1\cdot x)H(\gamma_2\cdot x)H(\gamma_3\cdot x)H(\gamma_4\cdot x)\\&
				\times \bar{H}(\gamma_5\cdot x)\bar{H}(\gamma_6\cdot x)\bar{H}(\gamma_7\cdot x)\bar{H}(\gamma_8\cdot x)e\left(\frac{kx}{p}\right)
			\end{split} 
		\end{equation*}
        By the Grothendieck-Lefschetz trace formula we have, 
        \begin{equation*}
            \mathcal C(k)=\sum_{x\in U(\F_p)}\text{tr}(\text{Fr}_{x}|\mathcal F)=\sum_{0\leq i\leq 2}(-1)^{i}\;\text{tr}(\text{Fr}_x|H_c^i(U_{\bar{\F}_p}; \mathcal F))
        \end{equation*}
        We will show that the extreme cohomologies are zero. Since $U\neq P_{\F_p}^1$, the lower extreme cohomology group vanishes, i.e., $H_c^0(U_{{\bar F}_p}; \mathcal F)=0$. To establish the square-root cancellation we need to show the other extreme of the cohomology group $H_c^2(U_{\bar{F}_p}; \mathcal F)$ is zero. For this we would follow~\cite{FKM15}.\\
        First we need to show, the sheaf $\mathcal F$ is $U$-generous in the sense of Definition 2.1 of loc. cit.,\\
        \begin{enumerate}[(i)]
            \item It is clear that all of the pullbacks are geometric irreducible and point-wise pure of weight $0$.
            \item Next we check that, for any $i\neq j$ there is no rank one sheaf $\mathcal L$ such that, 
            \begin{equation*}
                \gamma_i^*\mathcal H\not\simeq\gamma_j^*\mathcal H\otimes\mathcal L,\hspace{0.2cm}\text{or},\hspace{0.2cm}D(\gamma_i^*\mathcal H)\not\simeq\gamma_j^*\mathcal H\otimes\mathcal L.
            \end{equation*}
            We show this by checking their respective singularity sets. As $\mathcal L$ is a rank one sheaf, so the singularity set of $\gamma_j^*\mathcal H\otimes\mathcal L$ is same as the singularity set of $\gamma_j^*\mathcal H$. Singularity set of $\gamma_i^*\mathcal H$ is 
            \begin{equation*}
                \left\{\frac{-b_i}{c}, \frac{m_i^2-b_i}{c}, \infty\right\}
            \end{equation*}
            For $1\leq i, j\leq 4$ (or $5\leq i, j\leq 8$), using~\eqref{eq: change of variables}, Lemma~\ref{thm: first}, Lemma~\ref{thm: second}, and Lemma~\ref{thm: third}, we get that 
            $b_i\neq b_j$ if $i\neq j$. This in turn implies that their respective singularity sets are different. The same argument would work if $1\leq i\leq 4$, $5\leq j\leq 8$, and $j\neq i+4$. If $j=i+4$ then, with~\eqref{eq: change of variables} and Lemma~\ref{thm: first} we have $m_i\neq m_j$ which in turn again implies their singularity sets are different. This proves that 
            $$\gamma_i^*\mathcal H\not\simeq\gamma_j^*\mathcal H\otimes\mathcal L.$$ Similarly we also have, 
            $$D(\gamma_i^*\mathcal H)\not\simeq\gamma_j^*\mathcal H\otimes \mathcal L.$$
            \item Depending on the local representations our sheafs are,
            \begin{equation*}
                \mathcal H(1, \rho; \bar\mu, \bar\mu),\hspace{0.2cm}\mathcal H(1, \rho; \bar\mu_1, \bar\mu_2),\hspace{0.2cm}\text{and}\hspace{0.2cm}\mathcal H(1, \rho; \eta_f).
            \end{equation*}
            To determine $G^0$ we need to check various induction assumption like Kummer induced, Beyli induced, and inverse-Beyli induced,\\
            \begin{enumerate}[---]
                \item Let $d=2$, first sheaf is already not Kummer induced. Second sheaf is also not Kummer induced as $\mu_1\mu_2^{-1}$ is already a non-quadratic character. After base change to $\F_{p^2}$ the last sheaf becomes $\mathcal H(1\circ\text N, \rho\circ\text N; \eta_f, \eta_f^p)$, and it is also non-Kummer induced as $\eta_f^{2(p-1)}\neq 1$ because $\eta_f$ is a regularized character of $\F_{p^2}$ i.e. $\eta_f^{p-1}\neq 1$ and $\text{ord}(\eta_f)|(p^2-p)$.
                \item Write $2=1+1$; sheafs are not $(1, 1)$-Beyli induced. As for the first sheaf $\mu$ is non-quartic. Second sheaf would be Beyli-induced if $\mu_1^2=\rho=\mu_2^2$ it in particular implies $\mu_1\mu_2^{-1}$ is non-quadratic which we have assumed to be not. Similarly as, $\eta_f^{2(p-1)}\neq 1$, the last sheaf is non $(1, 1)$-Beyli induced. 
                \item Those sheafs are non $(1, 1)$-inverse Beyli-induced too as $\mu^2$, $\mu_1\mu_2$, and $\eta_f^{p+1}$ are non-trivial.
            \end{enumerate}
            Now let $\Lambda=\rho\chi_1\chi_2$, where $(\chi_1, \chi_2)$ is one of the pairs $(\br\mu, \br\mu)$; $(\br\mu_1, \br\mu_2)$; or $(\eta_f, \eta_f^p)$. Note that according to our assumptions $\Lambda\neq 1$ therefore there are only two possibilities, 
            \begin{enumerate}
              \item Suppose, $\Lambda^2=1$. In principal series case, it would imply $\mu_1\mu_2$ is quadratic; and in supercuspidal case it implies $\eta_f^{2(p+1)}=1$ which in turn forces that $\eta_f^{p-1}=1$.
              \item So the only possible case is $\Lambda^2\neq 1$, then we have $G^0=1$ or $\Sp(2)$. And according to our hypothesis $G^0=\Sp(2)$.
            \end{enumerate}
            \item For $i\neq j$, the paris $(\Sp(2), \text{Std}_i)$ and $(\Sp(2), \text{Std}_j)$ are Goursat-adapted~\cite[Example 1.8.1]{MR1081536}. 
        \end{enumerate}
        Therefore we have established $U$-generosity of $\mathcal F$.
        
        Now depending on the frequency $k$, we show the vanishing of second cohomology group.\\ First we consider the case of non-zero frequency $k\neq 0$. In this case $e(kx/p)$ gives rise to a non-trivial sheaf $\mathcal L_{\psi(kx)}$ of rank one which is smaller than the product of ranks of $\gamma_i^*\mathcal H$. Hence~\cite[Corollary 2.10.]{FKM15} gives, $H_c^2(U_{\F_p}; \mathcal F)=0$.\\ So we are left with the zero frequency $k=0$. To deal with this case, we follow the proof of~\cite[Theorem 2.7]{FKM15}. Suppose $H_c^2(U_{\F_p};\mathcal F)\neq 0$. Then by the Theorem 2.7 and Theorem 2.9 we have a geometric isomorphism 
        \begin{equation*}
            1\simeq\bigotimes_{1\leq i\leq 4}\Lambda_i(\gamma_i^*\mathcal H)\otimes\Lambda_{i+4}(D(\gamma_{i+4}^*\mathcal H))
        \end{equation*}
        where $\Lambda_j$ are irreducible representations of $G^0=\Sp(2)$ of $\mathcal H$ such that $\Lambda_i$ (resp. $\Lambda_{i+4}$) is a sub-representation of $\text{Std}$ (resp. $D(\text{Std})$). Therefore the trivial representation is sub-representation of Std and $D(\text{Std})$ but this is a contradiction.\\
        Therefore in any caes we have,
        \begin{equation*}
            \mathcal C(k)=-\text{tr}(\text{Fr}_x|H_c^1(U_{\F_p}; \mathcal F))
        \end{equation*}
        By Deligne's proof of the Riemann hypothesis over, as the sheaf $\mathcal F$ is of pure of weight $0$, all eigenvalues of Frobenius acting on the cohomology space have modulus $\leq \sqrt{p}$, and hence 
        \begin{equation*}
            \mathcal C(k)\ll \dim\,H_c^1(U_{\F_p}; \mathcal F)\times\sqrt{p}.
        \end{equation*}
        Finally, using the Euler-Poincar\'e formula, we can bound dimension in terms of conductor of $\mathcal F$; 
        \begin{equation*}
            \dim\,H_c^1(U_{\F_p}; \mathcal F)\leq C(\mathcal F)^2. 
        \end{equation*}
        Moreover from the definition of conductor and by the Theorem ?? we see that, 
        \begin{equation*}
            C(\mathcal F)\leq C(\mathcal L)\prod_{i=1}^{4}C(\mathcal \gamma_i^*\mathcal H)C(\gamma_{i+4}^*\mathcal H)=C(\mathcal L)C(\mathcal H)^8=O(1).
        \end{equation*}
        Summing up these we have, 
        \begin{lemma}[Square-root cancellation]\label{thm: Square-root cancellation}
            Let $\pi$ be as in the Theorem~\ref{thm: thm1} we have, 
            \begin{equation}\label{eq: sq-root cancellation of C(k)}
                \mathcal C(k)\ll \sqrt{p},
            \end{equation}
            where the implied constant is absolute. 
        \end{lemma}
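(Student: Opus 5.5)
The bound $\mathcal C(k)\ll\sqrt p$ comes from the Grothendieck--Lefschetz trace formula and Deligne's theorem, applied to the sheaf
\[
\mathcal F=\bigotimes_{1\le i\le 4}\bigl(\gamma_i^*\mathcal H\otimes D(\gamma_{i+4}^*\mathcal H)\bigr)\otimes\mathcal L_{\psi(kX)}
\]
on the open set $U$. First, I discard the $O(1)$ points $x$ at which some $\gamma_i\cdot x$ is a singularity of $\mathcal H$. By Corollary~\ref{cor: sq cancellation}, each such point contributes $O(1)$. On $U$, each factor $H(\gamma_i\cdot x)$ is the trace of Frobenius on a lisse, pure weight-zero, geometrically irreducible sheaf of rank $2$; this uses Lemma~\ref{thm:geo prop}, with the hypotheses of type (a)/(b) guaranteeing the disjointness needed there. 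The trace formula then writes $\mathcal C(k)$ as an alternating sum of Frobenius traces on $H^i_c(U_{\bar\F_p},\mathcal F)$ for $i=0,1,2$. Since $U$ is affine, $H^0_c=0$. Deligne's theorem bounds the $H^1_c$ term by $\sqrt p\,\dim H^1_c$. The Euler--Poincar\'e formula bounds the dimension by a polynomial in the conductor of $\mathcal F$. That conductor is bounded in terms of the ranks and the number of singularities of the eight pullbacks and of $\mathcal L_{\psi(kX)}$, hence is $O(1)$ uniformly in $p$, the $m_i$, and $k$.

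The main obstacle is proving $H^2_c(U_{\bar\F_p},\mathcal F)=0$, i.e.\ that $\mathcal F$ has no geometrically trivial quotient. I would use the sum-product framework of Fouvry--Kowalski--Michel by checking that the family $(\gamma_i^*\mathcal H)$ is $U$-generous.

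First, I would identify the connected monodromy group $G^0$ of $\mathcal H$. By Katz's classification of rank-two hypergeometric sheaves, $G^0$ is $\SL_2=\Sp(2)$ unless the sheaf is Kummer induced, Belyi induced, or inverse-Belyi induced. I would exclude each of these using the non-quadraticity of $\mu_1\mu_2^{-1}$, $\mu_1\mu_2$ and $\mu_i$ in case (a), and of $\eta_\pi^{p\pm1}$ in case (b). Then $G^0$ is finite or $\Sp(2)$, and the hypothesis $\pi_p\in\mathcal S$ rules out the finite case.

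Next, I would show that no two of the pullbacks $\gamma_i^*\mathcal H$, or their duals, are geometrically isomorphic up to a rank-one twist. Since a rank-one twist does not change ramification at the tame points, it is enough to compare singular loci $\{-b_i/c,\ (m_i^2-b_i)/c,\ \infty\}$. These loci differ exactly because the degenerate configurations ($n_2=n_6$, $n_4=n_8$, and the coincidences of sums of two squares) have already been removed by Lemmas~\ref{thm: first}, \ref{thm: second}, \ref{thm: third}. I have to make sure those exclusions cover every coincidence $b_i=b_j$ and, when $j=i+4$, the coincidence $m_i^2=m_j^2$ modulo $p$. This is the delicate bookkeeping step, since equality modulo $p$ could happen even when the integers differ. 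If needed, I would bound the extra congruence cases trivially, as in those lemmas.

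Goursat--Kolchin--Ribet then shows that the monodromy of $\bigoplus_i\gamma_i^*\mathcal H$ contains $\prod_i\SL_2$. If $k\neq 0$, $\mathcal L_{\psi(kX)}$ is wildly ramified at $\infty$ with Swan conductor $1$. This is too small for the tensor product to cancel it, so the generous-family criterion of Fouvry--Kowalski--Michel gives $H^2_c=0$. If $k=0$, a nonzero $H^2_c$ would give $\prod_i\SL_2$-invariants in $\bigotimes_i\mathrm{Std}_i$. But the tensor product of eight distinct standard representations of the factors is irreducible and nontrivial, so there are no such invariants. In both cases only $H^1_c$ survives, and this gives $\mathcal C(k)\ll\sqrt p$ with an absolute implied constant.
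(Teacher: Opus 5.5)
Your proposal is correct and follows the paper's proof essentially step for step. Both use the same sheaf $\mathcal F$ on $U$ and the trace formula with $H^0_c=0$, bound the $H^1_c$ term by Deligne plus Euler--Poincar\'e, and obtain $H^2_c=0$ from $G^0=\Sp(2)$ (forced by $\pi_p\in\mathcal S$), distinct singular loci, Goursat--Kolchin--Ribet, and the split into $k\neq 0$ and $k=0$. The mod-$p$ bookkeeping you flag is genuinely needed, because the paper's exclusion lemmas only remove integer coincidences while the $b_i$ can be as large as $p^{1+2\eps}$; your plan to bound the extra congruence configurations trivially costs at most about $p^{3+\eps}$ in $\Theta$, which is below the $p^{7/2+\eps}$ bound, so it works. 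Comparing singular loci is legitimate, but not because rank-one twists preserve ramification (they need not); it works because every local monodromy of $\mathcal H$ at $0$, $1$, $\infty$ is non-scalar, so a twist cannot remove a singularity.
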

        \qed

	\section{Estimation of error terms}\label{sec: End remarks}
	In this section we discuss the remaining cases~\eqref{eq:error 1} and~\eqref{eq:error 2}. Treatment of these cases would be similar to the $\M(X)$ and yields smaller contribution. 

	\subsection{Analysis of $\Er_1(X)$}  Note that in the equation~\eqref{eq:error 1} we have $(p, q)=1$, and so making a change of variable $a\leadsto ap$ we can rewrite this equation as, 

	\begin{equation*}
		\begin{split}
			\Er_1(X):=\frac{1}{Q}\int_\R W&\left(\frac{x}{\mathfrak X}\right) \underset{p\nmid q}{\sum_{q\leq Q}}\frac{g(q, x)}{pq}\sumstar_{a\md{q}}\\
			&\times\sum_m \lambda_\pi(m)e\left(\frac{am}{q}\right)e\left(\frac{xm}{pqQ}\right)U\left(\frac{m}{X^2}\right)\\
			&\times\sum_n e\left(-\frac{an^2}{q}\right)e\left(-\frac{xn^2}{pqQ}\right)V\left(\frac{n}{X}\right)\, dx
		\end{split}
	\end{equation*}
	Trivial estimate yields, 
	\begin{equation*}
		\Er_1(X)\ll_\eps XQ^2 
	\end{equation*}
	Therefore we need to save $Q^2=O(p^{2+\eps})$ and little more. Note that trivial estimate of $\M(X)$ was $X^3$, so we have already saved $p$ in this case. 

	Observe that in the $m$-sum the additive character has modulus $q$ which is co-prime to the level $p^2$, which makes the dual side of $m$-sum quite simpler. Up to an negligible error Voronoi summation formula transforms it into, 
	\begin{equation*}
		\begin{split}
			\chi_\pi(-q)\frac{\eps(\pi)X^2}{\sqrt{pq}}\sum_{m\ll p^{1+\eps}}\lambda_{\bar\pi}(m)e\left(-\frac{\overline{ap^2}m}{q}\right)\int_{0}^\infty  \frac{U(y)}{y^{1/4}}e\left(\frac{xyX^2}{pqQ}\pm \frac{2X\sqrt{my}}{pq}\right) \, dy 
		\end{split}
	\end{equation*}
	We have exactly saved the same amount as much we have saved in general case. 
	Now in the $n$-sum we have we will save more here, as the conductor of additive character has conductor $Q$ and the initial length is $X=Q\sqrt{p}$, so after applying the Poisson summation formula we will only left with zero frequency. Therefore up to negligible term it would be same as,
	\begin{equation*}
		\frac{X}{\sqrt{q}}\left(\frac{-a}{q}\right)\int_\R V(z) e\left(-\frac{xz^2Q}{q}\right)\, dz.
	\end{equation*}
     Plugging these expressions, and after some simplifications we can rewrite $\Er_1(X)$ as
	\begin{equation*}
		\begin{split}
			\eps(\pi)\mathfrak{X}\frac{X^2}{p}\underset{p\nmid q}{\sum_{q\leq Q}}\frac{\chi_{\pi}(-q)}{q^2}&\sum_{m\ll p^{1+\eps}}\frac{\lambda_{\bar\pi}(m)}{m^{1/4}}\sumstar_{a\md{q}}\left(\frac{a}{q}\right)e\left(\frac{am}{q}\right)\int_\R W(x)g(q, x\mathfrak{X})\\
			&\times\int_\R V(z) e\left(-\frac{x\mathfrak{X}z^2}{q}\right)\int_0^\infty  e\left(\frac{xy\mathfrak X X^2}{pqQ}\pm \frac{2X\sqrt{my}}{pq}\right)\, dx\, dz\, dy.
		\end{split}
	\end{equation*}
    The $a\md{q}$-sum gets reduce to the quadratic character $\left(\frac{m}{q}\right)\sqrt{q}$. 
    
    Now we turn our attention to the analysis of three-fold integral transform. Analysis of $y$-integral will be exactly same as in the general case. So, we have, 
    \begin{equation*}
        \int_{0}^{\infty}U(y)e\left(\frac{xy\mathfrak X Q}{q}\pm \frac{2X\sqrt{my}}{pq}\right)\, dy=\frac{p^{1/4}}{m^{1/4}}\sqrt{\frac{q}{Q}}e\left(\mp \frac{mQ}{xpq|\mathfrak X|}\right)U\left(\frac{m}{M}\right)+O(p^{-2026}).
    \end{equation*}

    Therefore we have,
    \begin{equation}\label{eq: end rmk for error 1}
        \Er_1(X)\ll_\eps \frac{X^2p^{1/4}}{p\sqrt{Q}}\sum_{q\leq Q}\frac{1}{q}\sum_{m\ll p^{1+\eps}}\frac{|\lambda_f(m)|}{\sqrt{m}}\ll_\eps\frac{X}{\sqrt{p}}.
    \end{equation}

    \subsection{Analysis of $\Er_2(X)$} In this section we estimate the contribution of $\Er_2(X)$. Recall, 

    \begin{equation*}
		\begin{split}
			\Er_2(X):=\frac{\mathfrak X}{Q}\int_{x\sim 1} W&\left({x}\right) {\sum_{q\leq Q/p}}\frac{g(pq, x\mathfrak X)}{p^2q}\sumstar_{a\md{p^2q}}\\
			&\times\sum_{m\sim X^2} \lambda_f(m)e\left(\frac{am}{p^2q}\right)e\left(\frac{xm\mathfrak X}{p^2qQ}\right)\\
			&\times\sum_{n\sim X} e\left(-\frac{an^2}{p^2q}\right)e\left(-\frac{xn^2\mathfrak X}{p^2qQ}\right)\, dx.
		\end{split}
	\end{equation*}
	Here our trivial estiamte is $X^2\sqrt{p}$, so to obtain non-trivial estimate we need to save $X\sqrt{p}$ and little more. Note that as $q$ has to be bigger than $1$, it restricts $X$ to its generic size i.e. $p^{3/2}\leq X \ll p^{3/2+\eps}$ and which in turn implies $q\leq O(p^\eps)$. As the $q$-sum wouldn't contribute more than $p^\eps$, so for sake of simplicity we set $q=1$ in $\Er_2(X)$.\\
	Voronoi summation transform the $m$-sum to, 
	\begin{equation*}
		\asymp \chi_\pi(\bar a)\frac{X^2}{p^2}\sum_{m\ll p^{1+\eps}}\lambda_{\bar\pi}(m)e\left(-\frac{\bar{a}m}{p^2}\right)\int_{y\sim 1}e\left(\frac{xy\mathfrak X X^2}{p^2Q}\right)J_{\kappa-1}\left(\frac{4\pi X\sqrt{my}}{p^2}\right)\, dy
	\end{equation*}
	Note that the integral transform is non-oscillatory, so we drop them from our future analysis. Similarly Poisson summation transform the $n$-sum to, 
	\begin{equation*}
		\frac{X}{p}\sum_{n\ll p^{1/2+\eps}}G(-a, -n; p^2)\int_{z\sim 1}e\left(\frac{nzX}{p^2}-\frac{x\mathfrak X y^2X^2}{p^2Q}\right)\, dz.
	\end{equation*}
	For similar reason we will also drop this integral transform. Now plugging these dual expressions we get, 
	\begin{equation*}
		\Er_2(X)\ll \frac{X^2}{p^{9/2}}\sum_{m\ll p^{1+\eps}}\sum_{n\ll p^{1/2+\eps}}|\lambda_\pi(m)||\mathfrak C(m, n; p^2)|
	\end{equation*}
	where, 
	\begin{equation*}
		\mathfrak C(m, n; p^2):=\sumstar_{a\md{p^2}}\chi_\pi(\bar a)G(-a, -n; p^2)e\left(-\frac{\bar{a}m}{p^2}\right)
	\end{equation*}
	We need the following lemma,
	\begin{lemma*}We have,
		\begin{equation*}
			\mathfrak C(m, n; p^2)\ll p^{3/2} \delta(4m-n^2\equiv 0\md{p}).
		\end{equation*}
	\end{lemma*}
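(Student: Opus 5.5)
The plan is to evaluate the Gau\ss\ sum $G(-a,-n;p^2)$ explicitly and then compute the resulting complete sum over $a\md{p^2}$ by splitting $a$ into a residue mod $p$ and a lift.

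First, $p^2$ is an odd prime power, so \eqref{eq:c odd} gives
\[
G(-a,-n;p^2)=\eps_{p^2}\left(\frac{-a}{p^2}\right)e\left(\frac{\overline{4a}n^2}{p^2}\right)=e\left(\frac{\overline{4a}n^2}{p^2}\right),
\]
since $\left(\frac{\cdot}{p^2}\right)$ is trivial on units and $\eps_{p^2}=1$. We are using the paper's normalisation of $G$, which divides by the square root of the modulus. Substituting $b=\bar a$ gives
\[
\mathfrak C(m,n;p^2)=\sumstar_{b\md{p^2}}\chi_\pi(b)\,e\left(\frac{b(n^2-4m)\bar 4}{p^2}\right).
\]
This is a Gau\ss\ sum modulo $p^2$ for the character $\chi_\pi$ of conductor $p$, evaluated at $u:=\bar4(n^2-4m)$.

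Second, write $b=b_0+pb_1$ with $b_0\in(\Z/p\Z)^\times$ and $b_1\md{p}$. Since $\chi_\pi$ has modulus $p$, $\chi_\pi(b)=\chi_\pi(b_0)$. The sum over $b_1$ is $\sum_{b_1}e(ub_1/p)=p\,\delta(u\equiv0\md p)$. When $p\mid u$, write $u=pu'$; the remaining sum is
\[
p\sumstar_{b_0\md p}\chi_\pi(b_0)e\left(\frac{b_0u'}{p}\right).
\]
This is $p$ times a Gau\ss\ sum modulo $p$ of the primitive character $\chi_\pi$. It therefore has absolute value $p\cdot\sqrt p=p^{3/2}$ when $p\nmid u'$, and vanishes when $p\mid u'$ because $\chi_\pi$ is non-trivial.

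Combining, and noting that $p\mid u$ is equivalent to $4m\equiv n^2\md p$ since $p$ is odd, we get $|\mathfrak C(m,n;p^2)|\le p^{3/2}\,\delta(4m-n^2\equiv0\md p)$, which is the claim. In fact the computation gives a sharper statement: the sum vanishes unless $p\,\|\,(4m-n^2)$.

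There is no real obstacle. The only points to check are the normalisation of $G$ in \eqref{eq:defn of quadratic gauss}, where the factor should be $1/\sqrt{c}$ (the displayed $1/\sqrt{q}$ is a typo), and the sign conventions in \eqref{eq:c odd}. Neither affects the absolute value bound.
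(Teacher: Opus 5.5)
Your proof is correct and is essentially the paper's own argument. Both evaluate $G(-a,-n;p^2)$ as the pure exponential $e(\overline{4a}n^2/p^2)$, invert $a$ to get a Gau\ss\ sum of $\chi_\pi$ modulo $p^2$, split $a=x+py$ so that the $y$-sum gives $p\,\delta(4m\equiv n^2\md p)$, and bound the remaining Gau\ss\ sum modulo $p$ by $\sqrt p$. Your extra remarks are also correct: the sum vanishes unless $p\,\|\,(4m-n^2)$, and the $1/\sqrt q$ in \eqref{eq:defn of quadratic gauss} should be $1/\sqrt c$.
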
 
	\begin{proof}
		Writing explicitly the quadratic Gau\ss\ sum we have, 
		\begin{equation*}
			\sumstar_{a\md{p^2}}\chi_\pi(\bar a)e\left(\frac{\br{4a}n^2-m\bar a}{p^2}\right)
		\end{equation*}
        By a change of variable: $a\mapsto\br{4a}$, it becomes,
        \begin{equation*}
            \chi_\pi(4)\sumstar_{a\md{p^2}}\chi_\pi(a)e\left(\frac{(n^2-4m)a}{p^2}\right).
        \end{equation*}
        Let us rewrite $a$ as $x+py$ with $x\in(\Z/p\Z)^\times$, and $y\in\Z/p\Z$
        \begin{equation*}
            \chi_\pi(4)\sumstar_{x\md{p}}\sum_{y\md{p}}\chi_\pi(x)e\left((n^2-4m)\left(\frac{x}{p^2}+\frac{y}{p}\right)\right)
        \end{equation*}
        Evaluating the $y\md{p}$-sum it becomes,
        \begin{equation*}
            \chi_\pi(4)\delta(4m\equiv n^2\md{p})\,p\sumstar_{x\md{p}}\chi_\pi(x)e\left(\frac{(n^2-4m)x}{p^2}\right),
        \end{equation*}
        by bounding the $x\md{p}$-sum by $\sqrt{p}$ we get our required estimate.
	\end{proof}
    
    Now we plug this bound in the last estimate of $\Er_2(X)$.
	We have,
	\begin{equation}\label{eq: end rmk for error 2}
		\Er_2(X)\ll \frac{X^2}{p^{3}}\underset{4m-n^2\equiv 0\md{p}}{\sum_{m\ll p^{1+\eps}}\sum_{n\ll p^{1/2+\eps}}}|\lambda_\pi(m)|\ll_\eps \frac{X}{p}.
	\end{equation}

    \section{Proof of main results}

    \begin{proof}[Proof of Theorem~\ref{thm: thm1}]
    Combining the estimates of $\M_u^\text{odd}(X)$ given in the Lemma~\ref{thm: large D}, Lemma~\ref{thm: small D and large T}, Lemma~\ref{thm:Small $D$ and small $T$} and the error term estimates given in the previous section, with~\eqref{eq: decomp of S_f(X)} we have, 

    \begin{equation*}
        \begin{split}
        S_\pi(X)&\ll\sqrt{X}p^{\frac{3}{4}-\delta_1+\eps}+\sqrt{X}p^{\frac{3}{4}-\delta_2+\eps}+\sqrt{X}p^{\frac{3}{4}+\frac{\delta_1}{4}-\frac{\delta_2}{2}+\eps}\\&\hspace{0.2cm}+\sqrt Xp^{\frac{3}{4}+\delta_2-\frac{1}{16}+\eps}+\sqrt{X}p^{\frac{3}{4}-\frac{1}{16}+\eps}+\sqrt{X}p^{\frac{3}{4}-\frac{1}{2}+\eps},
        \end{split}
    \end{equation*}
    Optimizing the first and third term from the right hand side we have, 
    \begin{equation*}
        S_\pi(X)\ll \sqrt X p^{\frac{3}{4}-\frac{2\delta_2}{5}+\eps}+\sqrt X p^{\frac{3}{4}+\delta_2-\frac{1}{16}+\eps}+\sqrt{X}p^{\frac{3}{4}-\frac{1}{16}+\eps},
    \end{equation*}
    and finally optimizing $\delta_2$ we have, 
    \begin{equation*}
        S_\pi(X)\ll \sqrt{X}p^{\frac{3}{4}-\frac{1}{56}+\eps}.
    \end{equation*}
    Plugging this estimate in~\eqref{eq:appx by s_f(x)} gives the subconvexity bound, 
    \begin{equation}
        L\left(\frac{1}{2},\, \sym^2\pi\right)\ll p^{\frac{3}{4}-\frac{1}{56}+\eps}.
    \end{equation}
    This completes the proof of Theorem~\ref{thm: thm1}.
    \end{proof}

    Now we prove the Corollary~\ref{thm:cor main}

    \begin{proof}[Proof of Corollary~\ref{thm:cor main}]
        We have already proved the subconvexity bound for the required $L$-function. To prove this corollary it is enough to understand the set $\mathcal S$ for the admissible primes $p$. 

        Recall the definition of set $\mathcal S$, 
        \[
        \mathcal S=\{\pi_p~\text{is of type}~(a)\,\text{or}~(b): G_{\text{geom}}^0\neq\{1\}\}
        \]
         In this set we are excluding all such representations which gives rise to exotic hypergeometric sheaves of finite geometric monodromy equivalently $G_\text{geom}^0=\{1\}$. Our hypergeometric sheaves have rank $2$. In the appendix we have classified all such hypergeometric sheaves of finite $G_\text{geom}$. We will use such classifications depending on the type of local representations. 
         \begin{enumerate}
             \item Let $\pi_p\simeq\pi(\tilde\mu_1, \tilde\mu_2)$ of type $(a)$. The associated hypergeometric sheave over $\F_p$ in this case is 
             $$
             \mathcal{H}(1, \rho; \bar\mu_1, \bar\mu_2)
             $$ 
             To each pair $(1, \rho)$ and $(\bar\mu_1, \bar\mu_2)$ we associate a multiset in $\Q/\Z$,
             \[
             (1, \rho)\mapsto \left\{0, \frac{1}{2}\right\}\hspace{0.5cm}\text{and}\hspace{0.5cm}(\bar\mu_1, \bar\mu_2)\mapsto\{y_1, y_2\}
             \]
             So to $\mathcal H(1, \rho; \bar\mu_1, \bar\mu_2)$ we can associated the following hypergeometric differential equation 
             \[
             z(1-z)g^{\prime\prime}(z)+(y_1+y_2-1/2)g^\prime(z)-(1/2-y_1)(1/2-y_2)g(z)=0
             \]
             and the triple of local exponent differences up to sign is given by
             \[
             (e_0, e_1, e_\infty)=\left(\frac{1}{2},\, \frac{1}{2}+y_1+y_2,\, y_1-y_2\right)
             \]
             Now to determine the finite type of the monodromy we use Schwarz's list given in Table~\ref{tab: schwarz list}. As $\mu_1\mu_2^{\pm 1}$ both are non-trivial and non-quadratic, i.e. $y_1\neq \pm y_2$, there is no possibility of monodromy group to be Dihedral. Multisets in the other cases is given in the following table
             
             \begin{table}[htpb]
        \centering
        \renewcommand{\arraystretch}{1.2}
        \begin{tabular}{lcccll}
        \hline
        \hspace{0.3cm}Type & $(e_1, e_\infty)$ & $\mathbf{y}=\{y_1, y_2\}$ & Condition on $p$ & Can occur?\\
        \hline
       \hspace{0.2cm}Dihedral & $(1/2, r/s)$ & --- & --- & \hspace{0.3cm}No \\   
       Tetrahedral & $(1/3, 1/3)$ & $\{1/12, 3/4\}$ & $p\equiv 1\md{12}$  & \hspace{0.3cm}Yes\\
       Octahedral & $(1/3, 1/4)$ & $\{1/24, 19/24\}$ & $p\equiv 1\md{24}$ & \hspace{0.3cm}Yes\\
       Octahedral & $(1/4, 1/3)$ & $\{1/24, 17/24\}$ & $p\equiv 1\md{24}$ & \hspace{0.3cm}Yes\\
       Icosahedral & $(1/3, 1/5)$ & $\{1/60, 49/60\}$ &  $p\equiv 1\md{60}$ & \hspace{0.3cm}Yes\\
       Icosahedral & $(1/5, 1/3)$ & $\{1/60, 41/60\}$ & $p\equiv 1\md{60}$  & \hspace{0.3cm}Yes\\
       Icosahedral & $(1/3, 2/5)$ & $\{7/60, 43/60\}$ & $p\equiv 1\md{60}$& \hspace{0.3cm}Yes\\
       Icosahedral & $(2/5, 1/3)$ & $\{7/60, 47/60\}$ & $p\equiv 1\md{60}$ & \hspace{0.3cm}Yes\\
       Icosahedral & $(1/5, 2/5)$ & $\{1/20, 13/20\}$ & $p\equiv 1\md{20}$ & \hspace{0.3cm}Yes\\
       Icosahedral & $(2/5, 1/5)$ & $\{1/20, 17/20\}$ & $p\equiv 1\md{20}$ & \hspace{0.3cm}Yes\\
       \hline
       \end{tabular}
       \vspace{0.1cm}
    \end{table}
 
    As the orders of $\mu_1$, and $\mu_2$ divides $p-1$, so the above finite monodromy groups can occur if $p$ satisfies one of the following congruences: 
    \[
    p\equiv 1\md{12},\hspace{0.2cm} p\equiv1\md{20},\hspace{0.2cm} p\equiv 1\md{24},\hspace{0.2cm} p\equiv1\md{60}.
    \]
    If $p$ is admissible prime, then it doesn't satisfy any of the above congruences which in turn implies that for such primes the hypergeometric sheaves attached to $\pi_p$ doesn't have finite monodromy group.

    \item Now let $\pi_p$ is of type $(b)$. In this case the our associated sheaf is, $\mathcal H(1, \rho;\, \eta_\pi)$, an exotic hypergeometric sheaf over $\F_p$. To analyze this case we need to perform base change from ambient field $\F_p$ to $\F_{p^2}$. So by base change from $\F_p$ to $\F_{p^2}$ this becomes isomorphic to 
    \[
    \mathcal{H}(1, \rho\circ\text{N}_{\F_p^2/\F_p};\, \eta_\pi, \eta_\pi^p).
    \]
    Similarly as before we associate multisets to these of characters:
    
    \[
    \left(1, \rho\circ\text{N}_{\F_p^2/\F_p}\right)\mapsto\left\{0, \frac{1}{2}\right\}\hspace{0.5cm}\text{and}\hspace{0.5cm}\left(\eta_\pi, \eta_\pi^p\right)\mapsto \left\{y_1:=\frac{k}{p^2-1}, y_2:=\frac{pk}{p^2-1}\right\}
    \]
    
    The local exponent differences of the associated hypergeometric differential equation: 
    \[
    (e_0, e_1, e_\infty) = \left\{\frac{1}{2}, \frac{1}{2}+\frac{k}{p-1}, \frac{k}{p+1}\right\}.
    \]
    since $\eta_\pi^{2(p\pm 1)}\neq 1$ again there is no chance for Dihedral case. Complete list is given as follows, 

    \begin{table}[htpb]
        \centering
        \renewcommand{\arraystretch}{1.2}
        \begin{tabular}{lcccll}
        \hline
        \hspace{0.3cm}Type & $(e_1, e_\infty)$ & $\mathbf{y}=\{y_1, y_2\}$ & Condition on $p$ & Can occur?\\
        \hline
       \hspace{0.2cm}Dihedral & $(1/2, r/s)$ & --- & --- & \hspace{0.3cm}No \\   
       Tetrahedral & $(1/3, 1/3)$ & $\{1/12, 3/4\}$ & --- & \hspace{0.3cm}No\\
       Octahedral & $(1/3, 1/4)$ & $\{1/24, 19/24\}$ & $p\equiv 19\md{24}$ & \hspace{0.3cm}Yes\\
       Octahedral & $(1/4, 1/3)$ & $\{1/24, 17/24\}$ & $p\equiv 17\md{24}$ & \hspace{0.3cm}Yes\\
       Icosahedral & $(1/3, 1/5)$ & $\{1/60, 49/60\}$ &  $p\equiv 49\md{60}$ & \hspace{0.3cm}Yes\\
       Icosahedral & $(1/5, 1/3)$ & $\{1/60, 41/60\}$ & $p\equiv 41\md{60}$  & \hspace{0.3cm}Yes\\
       Icosahedral & $(1/3, 2/5)$ & $\{7/60, 43/60\}$ & $p\equiv 49\md{60}$& \hspace{0.3cm}Yes\\
       Icosahedral & $(2/5, 1/3)$ & $\{7/60, 47/60\}$ & $p\equiv 41\md{60}$ & \hspace{0.3cm}Yes\\
       Icosahedral & $(1/5, 2/5)$ & $\{1/20, 13/20\}$ & --- & \hspace{0.3cm}No\\
       Icosahedral & $(2/5, 1/5)$ & $\{1/20, 17/20\}$ & --- & \hspace{0.3cm}No\\
       \hline
       \end{tabular}
       \vspace{0.1cm}
    \end{table}

    Tetrahedral case can occur if $3$ divides $p\pm 1$ i.e., $3|(p-1, p+1)$ which is impossible for odd $p$. And by the same reason there is no possibility for the last two cases of Icosahedral, $(1/5, 2/5), (2/5, 1/5)$.

    But for the other cases finite monodromy can occur, and we need to analyze ``condition on $p$''. We present one case and other cases will follow similarly. 

    Consider one of the Octahedral case $(1/3, 1/4)$ and the corresponding multiset is $\mathbf{y}=\{y_1, y_2\}=\left\{\frac{1}{24}, \frac{19}{24}\right\}$. This multiset need to satisfy the a special condition, 
    \[
    y_2\equiv py_1\md{p} \implies \frac{p}{24}\equiv\frac{19}{24}\md{1}\Longleftrightarrow p\equiv 19\md{24}
    \]
    
    So if prime $p$ is admissible then it will avoid all of the above congruences, which implies that the associated sheaf has infinite geometric monodromy group.
    \end{enumerate}

Combining these cases we conclude the proof of the corollary.
    
\end{proof}

    \section*{Appendix} 

    In this section we record some of the known results, mostly for future reference, a criterion to determine when the geometric monodromy group $G_{\text{geom}}$ of hypergeometric sheaves is finite. The main reference for this section is~\cite{MR1081536}. 

    Let's fix a finite field $\F_q$, and $\ell$ be a prime invertible in $\F_q$. And let's fix $\psi$ be a non-trivial $\ell$-adic additive character of $\F_q$. Let $n\geq 1$ be an integer. Consider the following data of multisets: 
    \[
    \boldsymbol{\chi}:=\{\chi_1,\dots, \chi_n\}, \hspace{0.5cm}\boldsymbol{\rho}:=\{\rho_1,\dots, \rho_n\}
    \]
    of $\ell$-adic characters of $\F_q^\times$ which are assumed to be disjoint i.e., $\chi_i\neq\rho_j$ for $1\leq i, j\leq n$. 

    Let $\lambda\in\F_q^\times$, the hypergeometric sum of type $(n, n)$ is given by, 
    \[
    \text{Hyp}(\F_q^{2n}; \boldsymbol{\chi}; \boldsymbol{\rho}; \lambda):= \frac{1}{|\F_q|^{n-\frac{1}{2}}}\underset{\frac{x_1\cdots x_n}{y_1\cdots y_n}=\lambda}{\sum_{x_1,\dots, x_n\in\F_q^\times}\sum_{y_1,\dots, y_n\in\F_q^\times}}\prod_{i=1}^{n}\chi_i(x_i)\bar{\rho_i}(y_i) \psi\left(\sum_{i=1}^n(x_i-y_i)\right)
    \]
    Katz constructed an $\ell$-adic middle-extension sheaf $\mathcal H(\boldsymbol{\chi}, \boldsymbol{\rho})$  on $\mathbb G_{m, \F_q}$ whose trace functions are these hypergeometric sums. 

    For each hypergeometric sum there is an associated hypergeometric differential equation. To define this we first fix an generator $\gamma\in\F_q^\times$. Using $\gamma$ we identify $\boldsymbol{\chi}$ (resp. $\boldsymbol{\rho}$) with multisets $\mathbf{x}=\{x_1,\dots, x_n\}$ (resp. $\mathbf{y}=\{y_1,\dots, y_n\}$) in $\Q/\Z$ by $\chi_i(\gamma)=e(x_i)$ (resp. $\rho_j(\gamma)=e(y_j)$). Viewing these multisets in $[0, 1)$ we have the associated hypergeometric differential equation $\mathrm D_{x, y}f=0$ (see~\cite[\S 3.1]{MR1081536}), where
    \[
    \mathrm D_{\mathbf x, \mathbf y}:=\prod_{i=1}^{n} (\partial - x_i) - z\prod_{i=1}^n (\partial - y_i),\hspace{1cm}\partial=z\partial_z . 
    \]
    Katz gives the following equivalent criterion for geometric monodromy group to be finite~(\cite[Corollary 8.17.15]{MR1081536}), 
    \begin{theorem}
        Let $\boldsymbol{\chi}, \boldsymbol{\rho}, \mathbf{x}$, and $\mathbf{y}$ are as above, and assume that all multiplicities in $\mathbf{X}$, and $\mathbf{Y}$ be equal $1$. Let $\mathcal H(\boldsymbol{\chi}, \boldsymbol{\rho})$ is a hypergeometric sheav of type $(n, n)$. Then the followings are equivalent: 
        \begin{enumerate}[1.]
            \item $\mathcal{H}(\boldsymbol{\chi}, \boldsymbol{\rho})$ has finite $G_\text{geom}$.
            \item For every unit $\alpha$ modulo the common denominator of $x_1,\dots, x_n, y_1, \dots, y_n$, The two subsets, 
            \[
            \mathbf{x}_\alpha=\{\alpha x_1,\cdots, \alpha x_n\}\hspace{0.5cm}\mathbf{y}_\alpha=\{\alpha y_1,\dots, \alpha y_n\}
            \]
            interlace or intertwined on the unit interval i.e., as we walk counterclockwise around the unit circle we alternatively encounter one from each subset.
            \item Differential Galois group $Gal_\text{diff}$ of $\mathrm D_{\mathbf x, \mathbf y}$ is finite.
        \end{enumerate}
        And indeed $G_\text{geom}\simeq Gal_\text{diff}$.
    \end{theorem}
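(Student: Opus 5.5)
The plan is to prove the three equivalences by passing through a common combinatorial condition, the interlacing criterion (2). The equivalence $(2)\Leftrightarrow(3)$ is a statement about the complex hypergeometric equation $\mathrm D_{\mathbf x,\mathbf y}f=0$; the equivalence $(1)\Leftrightarrow(2)$ is a statement about $\ell$-adic trace functions. Both are in the literature: Beukers--Heckman for the first, and Katz \cite{MR1081536}, \S 8.17, for the second. I would organise the argument around these two bridges rather than compare $G_{\text{geom}}$ and $Gal_\text{diff}$ directly.

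For $(2)\Leftrightarrow(3)$ I would use the distinctness assumption. Since all multiplicities are $1$ and $\mathbf x\cap\mathbf y=\emptyset$ modulo $\Z$ (disjointness of $\boldsymbol\chi,\boldsymbol\rho$), the monodromy of $\mathrm D_{\mathbf x,\mathbf y}$ is irreducible. By Levelt's theorem it is generated by $h_0,h_\infty$ with eigenvalues $e(x_i)$, $e(y_j)$, and $h_0h_\infty^{-1}$ is a pseudo-reflection. The monodromy group is finite iff it preserves a positive definite Hermitian form. For a single embedding, Beukers--Heckman show that the invariant Hermitian form is definite exactly when the eigenvalue sets interlace on the circle. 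Finiteness requires definiteness for every Galois conjugate, and Galois conjugation acts by $x\mapsto\alpha x$ with $\alpha$ a unit modulo the common denominator. This gives the condition for all $\alpha$.

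For $(1)\Leftrightarrow(2)$ I would use Katz's criterion that, for a geometrically irreducible pure sheaf, $G_{\text{geom}}$ is finite iff after a suitable twist all Frobenius traces over all finite extensions $\F_{q^d}$ are algebraic integers. Lemma~\ref{thm:geo prop} supplies geometric irreducibility and purity of weight $0$ for $\mathcal H(\boldsymbol\chi,\boldsymbol\rho)$. The traces are, via the multiplicative Fourier expansion, finite sums of products of Gauss sums $\prod g(\chi_i\Lambda)\prod \overline{g(\rho_j\Lambda)}$ normalised by $q^{-(n-1/2)}$. I would then compute $p$-adic valuations by Stickelberger's theorem (or Gross--Koblitz), applying all embeddings, i.e.\ all $\alpha$. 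Integrality for all $\Lambda$ and all $d$ reduces to an inequality between counting functions of the fractional parts $\{\alpha x_i+t\}$ and $\{\alpha y_j+t\}$. That inequality holds for all shifts $t$ precisely when the sets interlace. The final identification $G_{\text{geom}}\simeq Gal_\text{diff}$ would follow because both finite groups are generated by the same local data: the eigenvalue sets at $0,\infty$ and a pseudo-reflection at $1$. By Beukers--Heckman rigidity the finite irreducible group with these data is determined up to conjugacy.

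The main obstacle is the $p$-adic step: showing that integrality of all twisted traces is equivalent to the combinatorial interlacing condition, and not merely implied by it. I would first handle the direction interlacing $\Rightarrow$ integrality by a direct valuation estimate. For the converse I would exhibit, whenever interlacing fails for some $\alpha$, an explicit $d$ and twist $\Lambda$ for which one Gauss-sum monomial has strictly negative valuation that cannot cancel. In practice, since this is \cite[Corollary 8.17.15]{MR1081536}, I would quote Katz for this step and only verify that our sheaves satisfy his hypotheses.
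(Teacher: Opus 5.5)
The paper gives no argument of its own here. The theorem is simply quoted from Katz, \emph{Exponential sums and differential equations}, Corollary~8.17.15. Your plan also rests on Beukers--Heckman for (2)$\Leftrightarrow$(3) and defers the hard step to that same corollary, so it is consistent with the paper.

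Where you differ is in routing (1)$\Leftrightarrow$(2) through the integrality-of-traces criterion and Stickelberger. In this tame type-$(n,n)$ setting that detour is unnecessary, including the step you call the main obstacle. Your last paragraph already has the key idea; apply it to Zariski closures rather than to groups already known to be finite.

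\begin{itemize}
\item $\mathcal H(\boldsymbol\chi,\boldsymbol\rho)$ is tame at $0,1,\infty$.
\item The tame fundamental group of $\mathbb P^1_{\overline{\F}_p}\setminus\{0,1,\infty\}$ is topologically generated by inertia generators $\gamma_0,\gamma_1,\gamma_\infty$ with $\gamma_0\gamma_1\gamma_\infty=1$ (Grothendieck's tame specialization, SGA~1, Exp.~XIII). This is the input your sketch omits.
\item Hence the image of $\pi_1^{\mathrm{geom}}$ is the $\ell$-adic closure of $\langle h_0,h_\infty\rangle$. Here $h_0,h_\infty$ have eigenvalues $e(\alpha x_i)$, $e(\alpha y_j)$ for one fixed unit $\alpha$, and $h_1$ is a pseudo-reflection.
\item By Levelt, this pair is conjugate to the companion-matrix pair, whose entries lie in $\Z[\zeta_N]$.
\item So $G_{\mathrm{geom}}$ and $Gal_{\mathrm{diff}}$ (the latter by Schlesinger, the equation being Fuchsian) are Zariski closures of the images of one group $\Gamma\subset\GL_n(\Z[\zeta_N])$, under embeddings into $\overline{\Q}_\ell$ and $\C$ respectively.
\end{itemize}

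This gives (1)$\Leftrightarrow$(3) and $G_{\mathrm{geom}}\simeq Gal_{\mathrm{diff}}$ in one stroke, with (2) entering only via Beukers--Heckman.

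If you keep your own sketch, two points need correcting.
\begin{itemize}
\item ``Finite iff it preserves a positive definite Hermitian form'' is false as written. Sufficiency uses that $\Gamma$ has entries in $\Z[\zeta_N]$, hence is discrete in $\prod_\sigma\GL_n(\C)$. Only then does definiteness of the invariant form for \emph{every} Galois conjugate force finiteness.
\item At a fixed prime above $p$, Stickelberger only gives averages of fractional parts over the Frobenius orbit $\{p^k\alpha\}$. So ``applying all embeddings'' does not by itself isolate the interlacing condition for each individual $\alpha$ in (2).
\end{itemize}
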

    Now we restrict ourselves to $n=2$. In the differential equation $D_{\mathbf x, \mathbf y}f(z)=0$ we set $f(z)=z^{x_2}g(z)$. So $g$ satisfies the following differential equation 
    \[
    z(1-z)g^{\prime\prime}(z)+[c-(a+b+1)z]g^\prime(z)-abg(z)=0,
    \]
    where
    \[
    a=x_2-y_1,\hspace{0.25cm}b=x_2-y_2,\hspace{0.25cm}c=1+x_2-x_1.
    \]
    This is a Fuchsian equation with regular singularity points $0, 1$, and $\infty$ on $\mathbb P^1$. The local exponent differences at these points (up to sign) $e_0=1-c,\, e_1=c-a-b$, and $e_\infty = a-b$ (see~\cite{900052125}). It is known that this equation has finite monodromy group iff it has algebraic solutions. Schwarz has given a complete list of $(e_0, e_1, e_\infty)$ for which the hypergeometric differential equations have algebraic solutions~(see~\cite[\S 3]{Matsuda1985}). We give the complete list of triples and corresponding projective monodromy group in the Table~\ref{tab: schwarz list}.
    
    \begin{table}[htpb]
        \centering
        \renewcommand{\arraystretch}{1.2}
        \begin{tabular}{lcccll}
        \hline
        \hspace{0.3cm}Type & $e_0$ & $e_1$ & $e_\infty$ & \text{Order}\\
        \hline
        \hspace{0.2cm}Dihedral & $1/2$ & $1/2$ & $m/n$ & \hspace{0.3cm}$2n$\\
        \hline
        Tetrahedral & $1/2$ & $1/3$ & $1/3$ & \hspace{0.3cm}$12$\\
        Tetrahedral & $2/3$ & $1/3$ & $1/3$ & \hspace{0.3cm}$12$\\
        \hline
        Octahedral & $1/2$ & $1/3$ & $1/4$ & \hspace{0.3cm}$24$\\
        Octahedral & $2/3$ & $1/4$ & $1/4$ & \hspace{0.3cm}$24$\\
        \hline
        Icosahedral & $1/2$ & $1/3$ & $1/5$ & \hspace{0.3cm}$60$\\
        Icosahedral & $1/2$ & $1/3$ & $2/5$ & \hspace{0.3cm}$60$\\
        Icosahedral & $1/2$ & $1/5$ & $2/5$ & \hspace{0.3cm}$60$\\
        Icosahedral & $1/3$ & $1/3$ & $2/5$ & \hspace{0.3cm}$60$\\
        Icosahedral & $1/3$ & $2/3$ & $1/5$ & \hspace{0.3cm}$60$\\
        Icosahedral & $1/3$ & $1/5$ & $3/5$ & \hspace{0.3cm}$60$\\
        Icosahedral & $1/3$ & $2/5$ & $3/5$ & \hspace{0.3cm}$60$\\
        Icosahedral & $2/3$ & $1/5$ & $1/5$ & \hspace{0.3cm}$60$\\
        Icosahedral & $1/5$ & $1/5$ & $4/5$ & \hspace{0.3cm}$60$\\
        Icosahedral & $2/5$ & $2/5$ & $2/5$ & \hspace{0.3cm}$60$\\
        \hline
        \end{tabular}
        \vspace{0.1cm}
        \caption{Schwarz list}
        \label{tab: schwarz list}
    \end{table}

	\section*{Acknowledgments}
    The author would like to express his sincere gratitude to Prof. Ritabrata Munshi for suggesting this problem as well as for his continuous guidance and immense support throughout this work. The author is grateful to Prof. Will Sawin for a helpful conversation on hypergeometric sums in Section~\ref{sec:hyp geo sum}. The author is grateful to Prof. Philippe Michel for a helpful conversation on this problem during the conference \textit{Circle Method and Related Topics} held at ICTS, Bengaluru, in 2024. The author is also grateful to Mayukh Dasaratharaman for bringing the reference~\cite{MR3655759} to his attention and for numerous helpful suggestions. The author further thanks Sayan Ghosh and Sampurna Pal for helpful discussions and for their comments on the current draft. And finally, the author gratefully acknowledges the Indian Statistical Institute, Kolkata, for providing an excellent and supportive research environment.

\end{document}